\theoremstyle{definition}
\newtheorem{lemma}{Lemma}
\newtheorem{theorem}{Theorem}
\newtheorem{proposition}{Proposition}
\theoremstyle{definition}
\newtheorem{definition}{Definition}
\newtheorem*{remark}{Remark}
\newcommand\norm[2][1]{\left\|#2\right\|_{#1}}
\newcommand\abs[1]{\left|#1\right|}
\newcommand\where[2]{\left.#1\right|_{#2}}
\newcommand\difffrac[3][1]{
\ifnum #1=1
\frac{\mathrm{d} #2}{\mathrm{d} #3}
\else
\frac{{\mathrm{d}}^{#1} #2}{\mathrm{d} #3^{#1}}
\fi
}
\newcommand\R{{\mathbb{R}}}
\newcommand\N{{\mathbb{N}}}
\newcommand{\Setminus}[2]{{\left.#1\middle\backslash #2\right.}}
\newcommand\hamilton{{\mathcal{H}}}
\newcommand\st{{\mathrm{s.t.}}}
\renewcommand\vector[1]{\boldsymbol{#1}}
\newcommand\ve{{\vector{e}}}
\newcommand\vp{{\vector{p}}}
\newcommand\vx{{\vector{x}}}
\newcommand\vy{{\vector{y}}}
\newcommand\vA{{\vector{A}}}
\newcommand\vB{{\vector{B}}}
\newcommand\vF{{\vector{F}}}
\newcommand\vM{{\vector{M}}}
\newcommand\vlambda{{\vector{\lambda}}}
\newcommand\veta{{\vector{\eta}}}
\newcommand\vtau{{\vector{\tau}}}
\newcommand\vzero{{\vector{0}}}
\newcommand\tf{{t_\mathrm{f}}}
\newcommand\f{{\mathrm{f}}}
\newcommand\vxf{{\vx_\mathrm{f}}}
\newcommand\sgn{{\mathrm{sgn}}}
\newcommand\const{{\mathrm{const}}}
\newcommand\RomanNum[1]{\uppercase\expandafter{\romannumeral #1}}
\begin{document}

\title{Chattering Phenomena in Time-Optimal Control for High-Order Chain-of-Integrator Systems with Full State Constraints (Extended Version)}

\author{Yunan~Wang,~
Chuxiong~Hu,~\IEEEmembership{Senior~Member,~IEEE,}
Zeyang~Li,
Yujie~Lin,
Shize~Lin,
and~Suqin~He

\thanks{Corresponding author: Chuxiong Hu (e-mail: cxhu@tsinghua.edu.cn).
}}

\markboth{}{}

\maketitle

\begin{abstract}
Time-optimal control for high-order chain-of-integrator systems with full state constraints remains an open and challenging problem within the discipline of optimal control. The behavior of optimal control in high-order problems lacks precise characterization, and even the existence of the chattering phenomenon, i.e., the control switches for infinitely many times over a finite period, remains unknown and overlooked. This paper establishes a theoretical framework for chattering phenomena in the considered problem, providing novel findings on the uniqueness of state constraints inducing chattering, the upper bound of switching times in an unconstrained arc during chattering, and the convergence of states and costates to the chattering limit point. For the first time, this paper proves the existence of the chattering phenomenon in the considered problem. The chattering optimal control for 4th-order problems with velocity constraints is precisely solved, providing an approach to plan time-optimal snap-limited trajectories. Other cases of order $n\leq4$ are proved not to allow chattering. The conclusions rectify a longstanding misconception in the industry concerning the time-optimality of S-shaped trajectories with minimal switching times.
\end{abstract}

\begin{IEEEkeywords}
Optimal control, linear systems, variational methods, switched systems, chattering phenomenon.

\end{IEEEkeywords}

\IEEEpeerreviewmaketitle

\section{Introduction}\label{sec:Introduction}

\IEEEPARstart{T}{ime-optimal} control for high-order chain-of-integrator systems with full state constraints is a classical problem within the discipline of optimal control and kinematics, yet to be resolved. With time-optimal orientations and safety constraints, control for high-order chain-of-integrator systems has achieved universal application in computer numerical control machining \cite{wang2021local,wang2025consistency}, robotic motion control \cite{wang2022learning,zhao2020pareto}, semiconductor device fabrication \cite{li2018convergence}, and autonomous driving \cite{guler2016adaptive}. However, the behavior of optimal control in this issue has yet to be thoroughly investigated. Specifically, the existence of the chattering phenomenon \cite{marchal1973chattering} remains undiscovered, let alone the complete analysis on optimal control. As summarized in \cite{caponigro2018regularization}, chattering refers to fast oscillations of controls, such as an infinite numbers of switching over a finite time interval in the control theory.

Formally, the investigated problem of order $n$ is described in \eqref{eq:optimalproblem}, where $\vx=\left(x_k\right)_{k=1}^n\in\R^n$ is the state vector, $u\in\R$ is the control, and the terminal time $\tf$ is free. $\vx_0=\left(x_{0,k}\right)_{k=1}^n$ and $\vx_\f=\left(x_{\f k}\right)_{k=1}^n$ are the assigned initial state vector and terminal state vector, respectively. $\vM=\left(M_k\right)_{k=0}^n\in\R_{++}\times\overline{\R}_{++}^{n}$, where $\overline{\R}_{++}=\R_{++}\cup\left\{\infty\right\}$ is the strictly positive part of the extended real number line. The notation $\left(\bullet\right)$ means $\left[\bullet\right]^\top$. Problem \eqref{eq:optimalproblem} possesses a clear physical significance. For instance, if $n=4$, $x_4$, $x_3$, $x_2$, $x_1$, and $u$ respectively refer to the position, velocity, acceleration, jerk, and snap of a 1-axis motion system, respectively. Problem \eqref{eq:optimalproblem} requires a trajectory with minimum motion time from a given initial state vector to a terminal state vector under box state constraints.
\begin{IEEEeqnarray}{rl}\label{eq:optimalproblem}
\min\quad& J=\int_{0}^{\tf}\mathrm{d}t=\tf,\label{eq:optimalproblem_objective}\IEEEyesnumber\IEEEyessubnumber*\\
\st\quad&\dot{x}_k\left(t\right)=x_{k-1}\left(t\right),\,\forall 1<k\leq n,\,t\in\left[0,t_\f\right],\label{eq:optimalproblem_dotxk}\\
&\dot{x}_1\left(t\right)=u\left(t\right),\,\forall t\in\left[0,t_\f\right],\label{eq:optimalproblem_dotx1}\\
&\vx\left(0\right)=\vx_0,\,\vx\left(\tf\right)=\vx_\f,\\
&\abs{x_k\left(t\right)}\leq M_k,\,\forall 1\leq k\leq n,\,t\in\left[0,t_\f\right],\label{eq:optimalproblem_x_constraint}\\
&\abs{u\left(t\right)}\leq M_0,\,\forall t\in\left[0,t_\f\right],
\end{IEEEeqnarray}

Numerous studies have been conducted on problem \eqref{eq:optimalproblem} from the perspectives of optimal control and model-based classification discourse. Problem \eqref{eq:optimalproblem} without state constraints, i.e., $\forall 1\leq k\leq n$, $M_k=\infty$, can be fully solved by Pontryagin's maximum principle (PMP) \cite{hartl1995survey}, where the analytic expression of the optimal control \cite{bartolini2002time} is well-known. Once state constraints are introduced, problem \eqref{eq:optimalproblem} becomes practically significant but challenging to solve. The 1st- and 2nd-order problems are well-known with simple solutions \cite{ma2021optimal}. Haschke et al. \cite{haschke2008line} solved the 3rd-order problem where $x_{\f2}=x_{\f3}=0$. Kr{\"o}ger \cite{kroger2011opening} developed the Reflexxes library, solving 3rd-order problems where $x_{\f3}=0$. Berscheid and Kr{\"o}ger \cite{berscheid2021jerk} fully solved 3rd-order problems without position constraints, i.e., $M_3=\infty$, resulting in the Ruckig library. Our previous work \cite{wang2024time} completely solved 3rd-order problems and fully enumerated the system behaviors for higher-order problems, except for the limit point of chattering. However, few existing methods can solve optimal solutions for 4th-order or higher-order problems with full state constraints and arbitrarily given boundary states, despite the universal application of snap-limited trajectories for lithography machines with time-optimal orientations \cite{li2015data}. Specifically, even the existence of chattering in problem \eqref{eq:optimalproblem} remains unsolved, let alone a comprehensive understanding of the optimal control.

\begin{figure*}[!t]
\centering
\includegraphics[width=\textwidth]{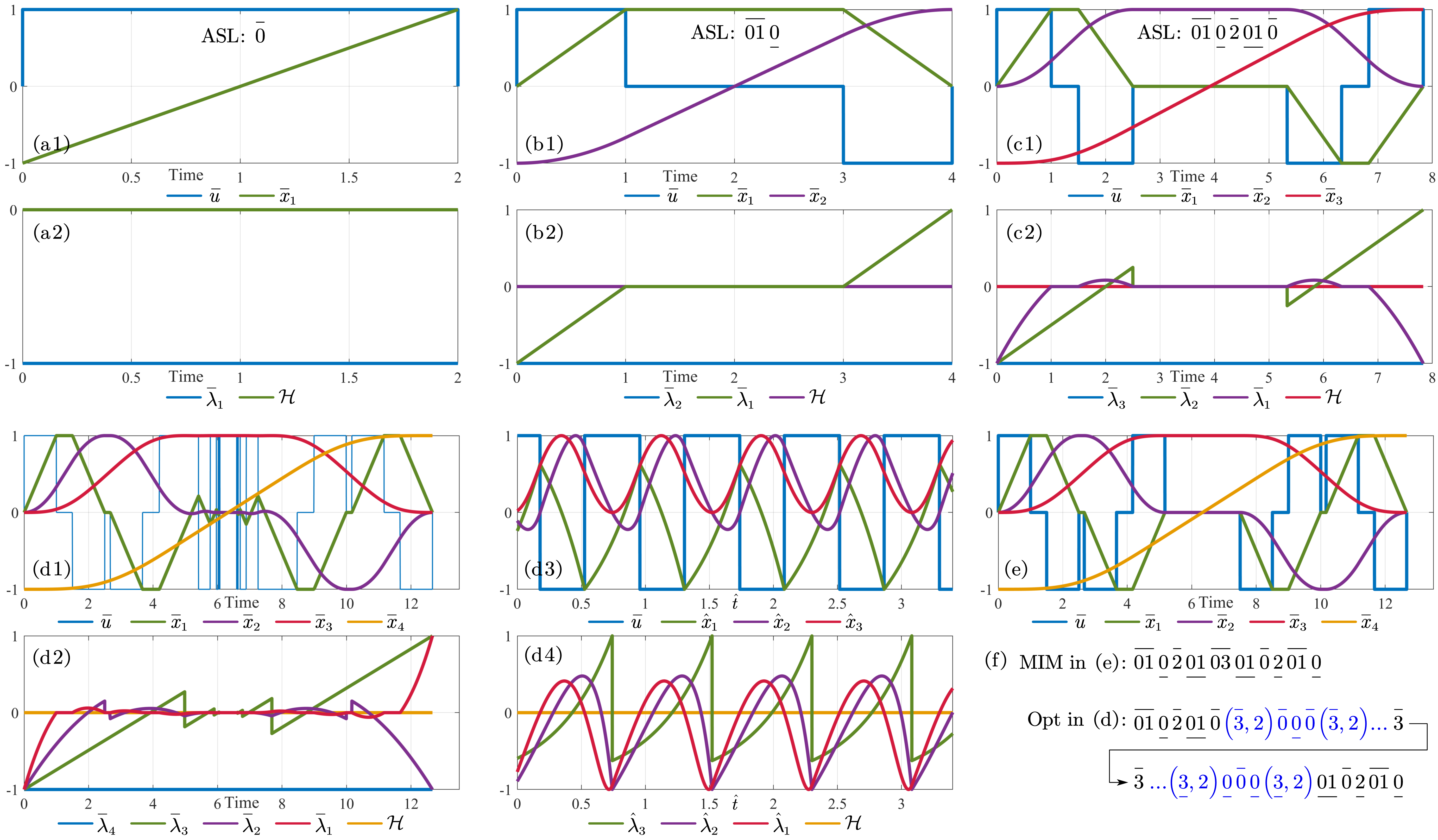}
\caption{(a-d) Strictly optimal trajectories for position-to-position problems of order $n=1,2,3,4$, respectively. (e) A suboptimal trajectory planned by the MIM method in our previous work \cite{wang2024time}. (f) Augmented switching laws (ASL, see Definition \ref{def:AugmentedSwitchingLaw}) of trajectories in (d-e). $M_0=1$, $M_1=1$, $M_2=1.5$, $M_3=4$, $M_4=15$. For an $n$th-order problem, $\vx_0=-M_n\ve_n$, $\vx_\f=M_n\ve_n$. In (a-e), $\bar{u}=\frac{u}{M_0}$. $\forall1\leq k\leq4$, $\bar{x}_k=\frac{x_k}{M_k}$, and $\bar{\lambda}_k=\frac{\lambda_k}{\norm[\infty]{\lambda_k}}$. (d3-d4) show the enlargements of (d1-d2) during the chattering period. The abscissa is in logarithmic scale with respect to time, i.e., $-\log_{10}\left(t_{\infty}-t\right)$, where $t_\infty\approx6.0732$ is the first chattering limit time. $\forall k=1,2,3$, $\hat{x}_k\left(t\right)=\frac{x_k^*\left(t\right)\left(t_\infty-t\right)^{-k}}{\norm[\infty]{x_k^*\left(t\right)\left(t_\infty-t\right)^{-k}}}$, and $\hat{\lambda}_k\left(t\right)=\frac{\lambda_k\left(t\right)\left(t_\infty-t\right)^{k-4}}{\norm[\infty]{\lambda_k\left(t\right)\left(t_\infty-t\right)^{k-4}}}$.
}
\label{fig:firstshow}
\end{figure*}

Generally, the chattering phenomenon \cite{zelikin2012theory} poses a challenge in theoretically investigating and numerically solving high-order optimal control problems with singular.
Fuller \cite{fuller1963study} found the first optimal control problem with chattering arcs, fully studying a problem for the 2nd-order chain-of-integrator system with minimum energy. Robbins \cite{robbins1980junction} constructed a 3rd-order chain-of-integrator system whose optimal control is chattering with a finite total variation. Chattering in hybrid systems is investigated as Zeno phenomenon \cite{heymann2005analysis,lamperski2012lyapunov}. Kupka \cite{kupka1988fuller} proved the ubiquity of the chattering phenomenon, i.e., optimal control problems with chattering as well as a Hamiltonian affine in the single input control constitute an open semialgebraic set. Numerous problems in the industry have been found to have optimal solutions with chattering \cite{borshchevskii1985problem,zhu2016minimum}, where the chattering phenomenon impedes the theoretical analysis and numerical computation of optimal control. In this context, little research has been conducted on the existence of the chattering phenomenon in the classical problem \eqref{eq:optimalproblem}. Neither proofs on non-existence nor counterexamples to the chattering phenomenon in problem \eqref{eq:optimalproblem} have been provided so far. In practice, there exists a longstanding oversight of the chattering phenomenon in problem \eqref{eq:optimalproblem} concerning the time-optimality of S-shaped trajectories with minimum switching times. Some works tried to minimize terminal time by reducing switching times of control \cite{he2020time,ezair2014planning}. As shown in Figs. \ref{fig:firstshow}(a-c), time-optimal trajectories of order $n\leq3$ exhibit a recursively nested S-shaped form. Hence, it is intuitively plausible to expect higher-order optimal trajectories to possess the form in Fig. \ref{fig:firstshow}(e). However, as proved in Section \ref{sec:ChatteringPhenomena4thOrder}, chattering phenomena occur in 4th-order trajectories. The optimal trajectory of order 4 is shown in Fig. \ref{fig:firstshow}(d).

Geometric control serves as a significant mathematical tool to investigate the mechanism underlying chattering \cite{agrachev2013control}. As judged in \cite{schattler2012geometric}, Zelikin and Borisov \cite{zelikin2012theory} have achieved the most comprehensive treatment of the chattering phenomenon so far. In \cite{zelikin2012theory}, the order of a singular arc is defined based on the Poisson bracket of Hamiltonian affine in control, whereas 2nd-order singular arcs with chattering have been widely investigated based on Lagrangian manifolds. However, although problem \eqref{eq:optimalproblem} has a Hamiltonian $\hamilton=\hamilton_0+\hamilton_1u$ affine in control, the chattering phenomenon in problem \eqref{eq:optimalproblem} remains challenging to investigate since $\hamilton_1$ is singular of order $\infty$.

It is meaningful to address impediments to numerical computation from the chattering nature of optimal control. Zelikin and Borisov \cite{zelikin2003optimal} reasoned that the discontinuity induced by chattering worsens the approximation in numerical integration, thus hindering the application of shooting methods in optimal control. Laurent et al. \cite{laurent2007interior} proposed an interior-point approach to solve optimal control problems, where chattering phenomena worsen the convergence. Caponigro et al. \cite{caponigro2018regularization} proposed a regularization method by adding a penalization of the total variation to suppress the chattering phenomenon, successfully obtaining quasi-optimal solutions without chattering. However, it is challenging to prove the existence of chattering through numerical computation due to the limited precision.

This paper investigates the chattering phenomenon in the open problem \eqref{eq:optimalproblem}. Section \ref{sec:PreparationWorks} formulates problem \eqref{eq:optimalproblem} by Hamiltonian and introduces some results of \cite{wang2024time} as preliminaries. Section \ref{sec:MainResults} summarized the main results of this paper. Section \ref{sec:NecessaryConditionsChattering} derives necessary conditions for the chattering phenomenon in problem \eqref{eq:optimalproblem}. Sections \ref{sec:ChatteringPhenomena4thOrder} and \ref{sec:ChatteringPhenomena3rdOrder} prove the existence and non-existence of chattering in low-order problems, respectively. The contributions of this paper are as follows.
\begin{enumerate}
\item This paper establishes a theoretical framework for the chattering phenomenon in the classical and open problem \eqref{eq:optimalproblem} within the discipline of optimal control, i.e., time-optimal control for high-order chain-of-integrator systems with full state constraints. The framework provides novel findings on the existence of chattering, the uniqueness of chattering state constraints in a chattering period, the upper bound on switching times in every unconstrained arc during chattering, and the convergence of states as well as costates to the chattering limit point. Existing works \cite{haschke2008line,kroger2011opening,berscheid2021jerk} lack precise characterization of optimal control's behavior in high-order problems. Even the existence of the chattering phenomenon remains unknown and overlooked. Due to the singular Hamiltonian $\hamilton_1$ of order $\infty$, it is difficult to directly apply predominant technologies for chattering analysis based on Lagrangian manifolds \cite{zelikin2012theory} to problem \eqref{eq:optimalproblem}, which demonstrates the necessity and significance of the established framework.
\item To the best of our knowledge, this paper proves the existence of chattering in problem \eqref{eq:optimalproblem} for the first time, rectifying a longstanding misconception in the industry concerning the time-optimality of S-shaped trajectories with minimal switching times. This paper proves that 4th-order problems with velocity constraints allow a unique chattering mode, where the decay rate in the time domain is precisely solved as $\alpha^*\approx0.1660687$. Based on the developed theory, time-optimal snap-limited trajectories with full state constraints can be planned for the first time. The chattering control is physically realizable due to the finite control frequency in practice. Note that snap-limited position-to-position trajectories are universally applied for ultra-precision control in the industry, yet the oversight of chattering impedes the approach to time-optimal profiles in previous works \cite{ezair2014planning}.
\item This paper fully enumerates existence and non-existence of chattering in problems of order $n\leq4$. Chattering does not exist in problems of order $n\leq3$ and 4th-order problems without velocity constraints. 4th-order problems with velocity constriants represent the problem allowing chattering of the lowest order. For problems of order $n\geq5$, chattering is allowed but not able to be induced by state constraints on $x_n$ and $x_1$. Furthermore, constrained arcs cannot exist in a chattering period.
\end{enumerate}

\section{Preliminaries}\label{sec:PreparationWorks}

\subsection{Problem Formulation}\label{subsec:Problem_Formulation}
Firstly, the well-known Bellman's principle of optimality (BPO) \cite{bellman1952theory} is applied to problem \eqref{eq:optimalproblem}. Consider the optimal trajectory $\vx=\vx^*\left(t\right)$ and the optimal control $u=u^*\left(t\right)$, $t\in\left[0,t_\f^*\right]$. BPO implies that $\forall 0< t_1< t_2<t_\f^*$, the trajectory $\vx=\vx^*\left(t\right)$, $t\in\left[ t_1, t_2\right]$ is optimal in the problem with the initial state vector $\vx^*\left( t_1\right)$ and the terminal state vector $\vx^*\left( t_2\right)$. The corresponding optimal control is $u=u^*\left(t\right)$, $t\in\left[ t_1, t_2\right]$.

Then, this section formulates the optimal control problem \eqref{eq:optimalproblem} from the Hamiltonian perspective. The Hamiltonian is
\begin{equation}\label{eq:hamilton}
\begin{aligned}
&\hamilton\left(\vx\left(t\right),u\left(t\right),\lambda_0,\vlambda\left(t\right),\veta\left(t\right),t\right)\\
=&\lambda_0+\lambda_1u+\sum_{k=2}^{n}\lambda_k x_{k-1}+\sum_{k=1}^{n}\eta_k\left(\abs{x_k}-M_k\right),
\end{aligned}\end{equation}
where $\lambda_0\geq0$ is a constant. $\vlambda\left(t\right)=\left(\lambda_k\left(t\right)\right)_{k=1}^n$ is the costate vector. $\lambda_0$ and $\vlambda$ satisfy $\left(\lambda_0,\vlambda\left(t\right)\right)\not=0$. The initial costates $\vlambda\left(0\right)$ and the terminal costates $\vlambda\left(\tf\right)$ are not assigned since $\vx\left(0\right)$ and $\vx\left(\tf\right)$ are given in problem \eqref{eq:optimalproblem}.

The Hamilton's equations for the costate vector is formulated as $\dot\vlambda=-\frac{\partial\hamilton}{\partial\vx}$. By \eqref{eq:hamilton}, it holds that
\begin{equation}\label{eq:derivative_costate}
\begin{dcases}
\dot\lambda_k=-\lambda_{k+1}-\eta_k\,\sgn\left(x_k\right),\,\forall 1\leq k<n,\\
\dot\lambda_n=-\eta_n\,\sgn\left(x_n\right).
\end{dcases}\end{equation}

In \eqref{eq:hamilton}, $\veta$ is the multiplier vector induced by inequality state constraints \eqref{eq:optimalproblem_x_constraint}, satisfying
\begin{equation}\label{eq:eta_constraint_zero}
\eta_k\geq0,\,\eta_k\left(\abs{x_k}-M_k\right)=0,\,\forall 1\leq k\leq n.\end{equation}
Equivalently, $\forall t\in\left[0,t_\f\right]$, $\eta_k\left(t\right)\not=0$ only if $\abs{x_k\left(t\right)}=M_k$.

In fact, $\abs{x_n}<M_n$ holds almost everywhere (a.e.), where ``a.e.'' means that a property holds except for a zero-measure set \cite{stein2009real}. Note that the set $\left\{t\in\left[0,\tf\right]:x_n\left(t\right)=M_n\right\}$ has at most one accumulation point; otherwise, applying Rolle's Theorem \cite{stein2009real} recursively, it can be proved that $\vx=M_n\ve_n$ at each accumulation point, which contradicts BPO. Hence, $x_n<M_n$ a.e. Similarly, $x_n>-M_n$ a.e. Therefore,
\begin{equation}\label{eq:derivative_costate_lambdan_zero}
\abs{x_n}<M_n,\,\eta_n=0,\,\dot\lambda_n=0\text{ a.e.}\end{equation}

PMP \cite{hartl1995survey} states that the input control $u\left(t\right)$ minimizes the Hamiltonian $\hamilton$ in the feasible set, i.e.,
\begin{equation}
u\left(t\right)\in\mathop{\arg\min}\limits_{\abs{U}\leq M_0}\hamilton\left(\vx\left(t\right),U,\lambda_0,\vlambda\left(t\right),\veta\left(t\right),t\right).\end{equation}
Hence, the bang-bang and singular controls hold, i.e.,
\begin{equation}\label{eq:bang_singular_bang_law}
u\left(t\right)=-M_0\sgn\left(\lambda_1\left(t\right)\right),\text{ if }\lambda_1\left(t\right)\not=0,\end{equation}
where $u\left(t\right)\in\left[-M_0,M_0\right]$ is undetermined during $\lambda_1\left(t\right)=0$.

Along the optimal trajectory, the Hamiltonian $\hamilton$ is constant. Since $J=\int_{0}^{\tf}\mathrm{d}t$ is in a Lagrangian form, it holds that
\begin{equation}\label{eq:hamilton_equiv_0}
\forall t\in\left[0,t_\f\right],\,\hamilton\left(\vx\left(t\right),u\left(t\right),\lambda_0,\vlambda\left(t\right),\veta\left(t\right),t\right)\equiv0.\end{equation}

A junction of costates $\vlambda$ occurs when an inequality state constraint switches between active and inactive, i.e., $\vlambda$ jumps when $\vx$ enters or leaves the constraints' boudnaries \cite{maurer1977optimal}.

\begin{proposition}[Junction condition in problem \eqref{eq:optimalproblem}]\label{prop:junction_condition}
Junction of costates in problem \eqref{eq:optimalproblem} can occur at $t_1$ if $\exists1\leq k\leq n$, s.t. (a) $\abs{x_k}$ is tangent to $M_k$, i.e., $\abs{x_k\left(t_1\right)}=M_k$ and $\abs{x_k}<M_k$ in a deleted neighborhood of $t_1$; or (b) the system enters or leaves the constrained arc $\left\{\abs{x_k}\equiv M_k\right\}$, i.e., $\abs{x_k}\equiv M_k$ at a one-sided neighborhood of $t_1$ and $\abs{x_k}<M_k$ at another one-sided neighborhood of $t_1$. Specifically, $\exists\mu\leq0$, s.t.
\begin{equation}\label{eq:junction_condition}
\vlambda\left(t_1^+\right)-\vlambda\left(t_1^-\right)=\mu\frac{\partial\left(\abs{x_k}-M_k\right)}{\partial\vx}=\mu\,\sgn\left(x_k\right)\ve_k.\end{equation}
In other words, $ \sgn\left(x_k\right)\left[\lambda_k\left(t_1^+\right)-\lambda_k\left(t_1^-\right)\right]\leq0$, while $\forall j\not=k$, $\lambda_j$ is continuous at $t_1$. Furthermore, a junction cannot occur during an unconstrained arc or a constrained arc.
\end{proposition}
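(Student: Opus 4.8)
The plan is to derive the junction condition as the specialization to \eqref{eq:optimalproblem} of the standard jump conditions that accompany the maximum principle for optimal control problems with pure state inequality constraints \cite{hartl1995survey,maurer1977optimal}. First I would put the constraints \eqref{eq:optimalproblem_x_constraint} in the usual form $g_k\left(\vx\right):=\abs{x_k}-M_k\le0$ and note that, although $g_k$ fails to be differentiable on $\left\{x_k=0\right\}$, every point of the boundary satisfies $x_k=\pm M_k\ne0$, so $g_k$ is smooth in a neighborhood of the boundary with $\partial g_k/\partial\vx=\sgn\left(x_k\right)\ve_k$. Thus the non-smoothness of the absolute value never meets a junction and plays no role in the argument.

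Next I would invoke the bounded-variation form of the costate supplied by PMP with state constraints: $\vlambda$ is of bounded variation (take its right-continuous representative), and \eqref{eq:derivative_costate} holds in the measure sense, $\mathrm{d}\lambda_k=-\lambda_{k+1}\,\mathrm{d}t-\sgn\left(x_k\right)\mathrm{d}\eta_k$ for $1\le k<n$ and $\mathrm{d}\lambda_n=-\sgn\left(x_n\right)\mathrm{d}\eta_n$, where each $\eta_k$ is now a nonnegative regular Borel measure supported on the contact set $\left\{t:\abs{x_k\left(t\right)}=M_k\right\}$, in accordance with \eqref{eq:eta_constraint_zero}. Integrating across $t_1$ gives $\vlambda\left(t_1^+\right)-\vlambda\left(t_1^-\right)=-\sum_{k=1}^n\eta_k\left(\left\{t_1\right\}\right)\sgn\left(x_k\left(t_1\right)\right)\ve_k$, so a discontinuity requires $\eta_k\left(\left\{t_1\right\}\right)>0$, hence $\abs{x_k\left(t_1\right)}=M_k$, for at least one $k$. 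Under BPO the geometry of \eqref{eq:optimalproblem} leaves a single such index $k$ active at any instant; putting $\mu:=-\eta_k\left(\left\{t_1\right\}\right)\le0$ then reproduces \eqref{eq:junction_condition}, and contracting with $\sgn\left(x_k\left(t_1\right)\right)\ve_k$ gives $\sgn\left(x_k\right)\left[\lambda_k\left(t_1^+\right)-\lambda_k\left(t_1^-\right)\right]=\mu\le0$, while $\lambda_j$ is continuous at $t_1$ for every $j\ne k$ since the jump vector is parallel to $\ve_k$.

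It then remains to locate the atoms of $\eta_k$. If $\abs{x_k\left(t_1\right)}=M_k$ and $\abs{x_k}<M_k$ throughout a two-sided deleted neighborhood of $t_1$, then $t_1$ is an isolated contact point, i.e.\ case (a); if instead $\abs{x_k}\equiv M_k$ on exactly one side of $t_1$, then $t_1$ is an entry or exit point of a boundary arc, i.e.\ case (b). The only remaining configuration is $\abs{x_k}\equiv M_k$ on a two-sided neighborhood, i.e.\ $t_1$ in the relative interior of a boundary arc; there $x_k\equiv M_k$ forces $x_{k-1}\equiv\cdots\equiv x_1\equiv u\equiv0$, so the arc is fully determined and $\eta_k$ restricted to that interior is an ($L^\infty$) density carrying no atom, whence $\vlambda$ is continuous there. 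On an unconstrained arc every $\eta_j\equiv0$, so $\vlambda$ is $C^1$. Together these give the final sentence of the proposition.

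The main obstacle is the measure-theoretic bookkeeping behind the second step: justifying the bounded-variation/measure representation of $\vlambda$, the sign $\eta_k\ge0$, and — most delicately — that atoms of $\eta_k$ sit only at tangential contacts and at entry/exit points of boundary arcs, never in their relative interior. Fixing the orientation so that $\mu\le0$ rather than $\mu\ge0$ requires care with the direct-adjoining convention, and collapsing the jump to a single coordinate $\ve_k$ relies on the problem-specific structure forbidding two distinct constraints to be simultaneously activated; if one prefers to avoid that reduction, the jump can be kept as $-\sum_k\eta_k\left(\left\{t_1\right\}\right)\sgn\left(x_k\right)\ve_k$ over the indices active at $t_1$, with the stated form recovered whenever a single constraint is involved.
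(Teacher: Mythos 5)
Your derivation is essentially the paper's own justification: the paper states Proposition~\ref{prop:junction_condition} without proof, appealing to the standard direct-adjoining junction conditions for state-constrained PMP \cite{maurer1977optimal,hartl1995survey}, which is exactly the bounded-variation/measure machinery you specialize, and your sign bookkeeping ($\mu=-\eta_k\left(\left\{t_1\right\}\right)\leq0$) matches the paper's convention. The one soft spot in your sketch --- excluding atoms of $\eta_k$ in the relative interior of a constrained arc --- is handled in the paper not by an $L^\infty$-density claim but by the structural fact of Lemma~\ref{lemma:costate}.\ref{lemma:costate_lambdak_polynomial} (on such an arc $\lambda_j\equiv0$ for all $j\leq k$ and $\eta_k\equiv\lambda_{k+1}$ is a polynomial, so $\lambda_k$ cannot jump there), which you could invoke to make that step rigorous.
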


\begin{remark}

The junction of $\vlambda$ significantly enriches the behavior of optimal control in problem \eqref{eq:optimalproblem}. If $\vlambda$ is continuous, then there exists an upper bound on the number of switching times. In contrast, the junction can even introduced chattering phenomena in problem \eqref{eq:optimalproblem}, i.e., $u$ switches for infinitely many times in a finite period, as reasoned in Section \ref{sec:ChatteringPhenomena4thOrder}.
\end{remark}

A 3rd-order optimal trajectory is shown in Fig. \ref{fig:PartIDemo} as an example. The bang-bang and singular controls \eqref{eq:bang_singular_bang_law} can be verified. $\lambda_3$ jumps at $t_3$ since $x_3$ is tangent to $-M_3$ at $t_3$.

For computation, the system dynamics is listed as follows.

\begin{proposition}[System dynamics of problem \eqref{eq:optimalproblem}]\label{prop:system_dynamics}
Assume that $\forall 1\leq i\leq N$, $u\equiv u_i$ on $t\in\left(t_{i-1},t_i\right)$, where $\left\{t_i\right\}_{i=0}^N$ increases strictly monotonically. Then, $\forall 1\leq k\leq n$,
\begin{equation}\label{eq:system_dynamics}
\begin{aligned}
x_{k}\left(t_N\right)=&\sum_{j=1}^{k}\frac{x_{k-j}\left(t_0\right)}{j!}T_N^{j}+\sum_{i=1}^{N}\frac{\Delta u_i}{k!}T_i^k,
\end{aligned}\end{equation}
where $\Delta u_i=u_i-u_{i-1}$, $u_0=0$, and $T_i=t_N-t_{i-1}$.
\end{proposition}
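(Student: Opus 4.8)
The plan is to treat Proposition~\ref{prop:system_dynamics} purely as a statement about the forward dynamics of the chain of integrators under a piecewise-constant input; optimality plays no role. I would split the state at $t_N$ into the homogeneous part propagated from the initial data $\vx(t_0)$ and the forced part produced by $u$, and then exploit the piecewise-constant structure of $u$ only in the forced part.

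Writing the system as $\dot\vx=A\vx+\ve_1 u$ with $A$ the nilpotent shift matrix ($A\ve_k=\ve_{k+1}$ for $k<n$, $A\ve_n=\vzero$, so $A^n=0$), variation of constants gives $\vx(t_N)=\mathrm{e}^{AT_N}\vx(t_0)+\int_{t_0}^{t_N}\mathrm{e}^{A(t_N-s)}\ve_1\,u(s)\,\mathrm{d}s$. Since $(\mathrm{e}^{At})_{k,k-m}=t^m/m!$ for $0\le m\le k-1$, the $k$-th component is
\begin{equation*}
x_k(t_N)=\sum_{j}\frac{x_{k-j}(t_0)}{j!}\,T_N^{j}+\int_{t_0}^{t_N}\frac{(t_N-s)^{k-1}}{(k-1)!}\,u(s)\,\mathrm{d}s,
\end{equation*}
the integral being the Cauchy formula for the $k$-fold iterated integral of $u$. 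The first sum is already the homogeneous contribution asserted in the statement and uses nothing about $u$; the same display also follows by a routine induction on $k$, the inductive step being integration of the formula for $x_{k-1}$ over $[t_0,t_N]$.

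For the forced integral I would use the jump representation of the control: because $u\equiv u_i$ on $(t_{i-1},t_i)$ and $u_0=0$, one has $u_i=\sum_{l\le i}\Delta u_l$, hence $u(s)=\sum_{i=1}^{N}\Delta u_i\,\mathbf{1}\{s>t_{i-1}\}$ for a.e.\ $s\in(t_0,t_N)$. Interchanging the finite sum with the integral and evaluating each elementary integral,
\begin{equation*}
\begin{aligned}
\int_{t_0}^{t_N}\frac{(t_N-s)^{k-1}}{(k-1)!}u(s)\,\mathrm{d}s
&=\sum_{i=1}^{N}\Delta u_i\int_{t_{i-1}}^{t_N}\frac{(t_N-s)^{k-1}}{(k-1)!}\,\mathrm{d}s\\
&=\sum_{i=1}^{N}\frac{\Delta u_i}{k!}\,(t_N-t_{i-1})^{k}=\sum_{i=1}^{N}\frac{\Delta u_i}{k!}\,T_i^{k},
\end{aligned}
\end{equation*}
which is the second sum. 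Adding the two pieces proves the formula. Equivalently, one can keep $u(s)=\sum_i u_i\,\mathbf{1}_{(t_{i-1},t_i)}(s)$, integrate to get $\sum_i u_i(T_i^k-T_{i+1}^k)/k!$ with $T_{N+1}:=0$, and telescope the coefficients into $\Delta u_i$ by summation by parts, again using $u_0=0$.

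I do not expect a genuine obstacle here: the statement is bookkeeping built on the closed-form solution of a linear system. The points deserving a line of justification are that $\vx$ is absolutely continuous across the whole interval, so the fundamental theorem of calculus applies on each subinterval and the pieces glue; that the convention $u_0=0$ is exactly what makes the jump/telescoping identity hold with coefficients $\Delta u_i$ (note $T_1=T_N$, so the $i=1$ term is $\frac{\Delta u_1}{k!}T_N^{k}$ as required); and that the values of $u$ at the finitely many switching instants $\{t_i\}$ are irrelevant to the integrals. If a matrix-exponential-free write-up is preferred, the induction-on-$k$ version uses only iterated one-dimensional integrals of piecewise polynomials and is equally short.
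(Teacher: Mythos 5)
Your proof is correct but follows a genuinely different route from the paper. The paper argues by induction on $N$, the number of constant-control segments: it takes for granted the one-segment Taylor expansion $x_k\left(t_N\right)=\sum_{j=0}^{k-1}\frac{T_N^j}{j!}x_{k-j}\left(t_{N-1}\right)+\frac{T_N^k}{k!}u_N$, substitutes the induction hypothesis for $\vx\left(t_{N-1}\right)$, and lets the $\Delta u_i$ coefficients emerge implicitly from the recursion, so the whole proof is three lines and never leaves piecewise polynomials. You instead write down the closed-form variation-of-constants solution with the nilpotent shift matrix, reduce the forced term to the Cauchy iterated-integral formula, and make the $\Delta u_i\,T_i^k$ structure explicit by decomposing the piecewise-constant control into a sum of step functions $u\left(s\right)=\sum_{i=1}^{N}\Delta u_i\,\mathbf{1}\left\{s>t_{i-1}\right\}$ (or, equivalently, by summation by parts). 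Your version is slightly longer but more transparent about where the jump coefficients come from, and it generalizes verbatim to arbitrary integrable controls, whereas the paper's induction is the more economical bookkeeping for exactly this piecewise-constant setting; your remarks on absolute continuity, the role of $u_0=0$, and the irrelevance of values at the switching instants are all sound. One small point worth flagging: the range $j=1,\dots,k$ in \eqref{eq:system_dynamics} as printed would involve $x_0\left(t_0\right)$ and omit $x_k\left(t_0\right)$; both your derivation and the paper's own inductive step produce the homogeneous sum over $j=0,\dots,k-1$ (equivalently, including the $x_k\left(t_0\right)$ term), so the printed index range appears to be a typo rather than a defect of your argument.
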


\begin{proof}
\eqref{eq:system_dynamics} holds for $ \vx\left(t_0\right)$. Assume that \eqref{eq:system_dynamics} holds for $ \vx\left(t_{N-1}\right)$. Since $u\equiv u_N$ for $t\in\left(t_{N-1},t_N\right)$, it holds that $x_{k}\left(t_N\right)=\sum_{j=0}^{k-1}\frac{T_N^j}{j!}x_{k-j}\left(t_{N-1}\right) +\frac{T_N^k}{k!}u_N$. Therefore, \eqref{eq:system_dynamics} holds for $ \vx\left(t_N\right)$. By induction, Proposition \ref{prop:system_dynamics} holds.
\end{proof}

\subsection{Main Results and Notations of \cite{wang2024time}}\label{subsec:MainResultsOfPart1}

This section introduces a theoretical framework for problem \eqref{eq:optimalproblem} developed by our previous work \cite{wang2024time}, which is helpful for investigating the chattering phenomenon in problem \eqref{eq:optimalproblem}. The following lemma proved in \cite{wang2024time} fully provides behaviors of optimal control except chattering phenomena.

\begin{lemma}[Optimal Control's Behavior of Problem \eqref{eq:optimalproblem} \cite{wang2024time}]\label{lemma:costate}
For the optimal control of problem \eqref{eq:optimalproblem}, it holds that:

\begin{enumerate}
\item\label{lemma:costate_uniqueoptimalcontrol} The optimal control is unique in an a.e. sense. In other words, if $u=u_1^*\left(t\right)$ and $u=u_2^*\left(t\right)$, $t\in\left[0,t_\f^*\right]$, are both optimal controls of \eqref{eq:optimalproblem}, then $u_1^*\left(t\right)=u_2^*\left(t\right)$ a.e.
\item\label{lemma:costate_bangbang} $u=-\sgn\left(\lambda_1\right)M_0$ a.e., where $u\left(t\right)=0$ if $\lambda_1\left(t\right)=0$.
\item\label{lemma:lambda1continue} $\lambda_1$ is continuous despite the junction condition \eqref{eq:junction_condition}.
\item\label{lemma:costate_lambdak_polynomial} $\lambda_k$ consists of $\left(n-k\right)$th degree polynomials and zero. Specifically, $\lambda_k\equiv0$ if $\exists j\geq k$, $\abs{x_j}\equiv M_j$.
\item\label{lemma:costate_sign} If $\vx$ enters $\left\{\abs{x_k}\equiv M_k\right\}$ at $t_1$ from an unconstrained arc, then $u\left(t_1^-\right)=\left(-1\right)^{k-1}\sgn\left(x_k\left(t_1\right)\right)$. If $\vx$ leaves $\left\{\abs{x_k}\equiv M_k\right\}$ at $t_1$ and enters an unconstrained arc, then $u\left(t_1^+\right)=-\sgn\left(x_k\left(t_1\right)\right)$.
\item\label{lemma:costate_junction_secondbracket} If $\exists t_1\in\left(0,\tf\right)$, s.t. $\abs{x_k}$ is tangent to $M_k$ at $t_1$. Then, one and only one of the following conclusions hold:
\begin{enumerate}
\item $\exists l<\frac{k}{2}$, s.t. $x_{k-1}=x_{k-2}=\dots=x_{k-2l+1}=0$ at $t_1$, while $x_{k-2l}\left(t_1\right)\not=0$, and $\sgn\left(x_{k-2l}\left(t_1\right)\right)=-\sgn\left(x_{k}\left(t_1\right)\right)$. The degree of $\abs{x_k\left(t_1\right)}=M_k$ is defined as $2l$.
\item $x_{k-1}=x_{k-2}=\dots=x_1=0$ at $t_1$. $u\left(t_1^+\right)=-\frac{M_0}{M_k}x_k\left(t_1\right)$, and $u\left(t_1^-\right)=\left(-1\right)^{k-1}\frac{M_0}{M_k}x_k\left(t_1\right)$. The degree of $\abs{x_k\left(t_1\right)}=M_k$ is defined as $k$.
\end{enumerate}
\end{enumerate}
\end{lemma}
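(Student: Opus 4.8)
The plan is to extract everything from Pontryagin's maximum principle, whose consequences \eqref{eq:derivative_costate}--\eqref{eq:hamilton_equiv_0} and the junction bookkeeping of Proposition~\ref{prop:junction_condition} are already available, and to reduce the statement to three mutually reinforcing ingredients: (i) the piecewise-polynomial structure of the costates forced by Hamilton's equations on each smooth arc; (ii) a local Taylor analysis at a time where $\abs{x_k}$ meets $M_k$, exploiting that the plant is a chain of integrators; and (iii) the continuity/jump pattern of $\vlambda$ across junctions. Item~\ref{lemma:costate_bangbang} (the bang--singular dichotomy) is immediate from pointwise minimization of $\hamilton$, which is affine in $u$ with coefficient $\lambda_1$, so the remaining work is to pin down the singular set and the junction behavior.

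For item~\ref{lemma:costate_lambdak_polynomial} I would argue arc by arc. On an unconstrained arc every $\eta_k$ vanishes, so \eqref{eq:derivative_costate} reads $\dot\lambda_k=-\lambda_{k+1}$ for $k<n$ and $\dot\lambda_n=0$; integrating from the top index downward shows $\lambda_n$ is constant and $\lambda_k$ is a polynomial of degree $n-k$. On a constrained arc $\abs{x_j}\equiv M_j$, differentiating $x_j\equiv\pm M_j$ through \eqref{eq:optimalproblem_dotxk}--\eqref{eq:optimalproblem_dotx1} forces $x_{j-1}\equiv\dots\equiv x_1\equiv0$ and $u\equiv0$; since $\lambda_1\ne0$ would make $u=\pm M_0$, the bang-bang rule leaves $\{\lambda_1\ne0\}$ of measure zero on this arc, and with item~\ref{lemma:lambda1continue} in hand $\lambda_1\equiv0$ there, so substituting $\lambda_1\equiv0$ and $\eta_1=0$ (valid because $\abs{x_1}=0\ne M_1$) into \eqref{eq:derivative_costate} and iterating upward yields $\lambda_k\equiv0$ for every $k\le j$. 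The same chain of implications, applied to a maximal interval on which $\lambda_1\equiv0$, excludes an \emph{interior} singular arc: there it would force $\lambda_1\equiv\dots\equiv\lambda_n\equiv0$, hence $\hamilton=\lambda_0\equiv0$ by \eqref{eq:hamilton_equiv_0}, contradicting $(\lambda_0,\vlambda)\ne0$; so every singular arc is a constrained arc with $u\equiv0$, which finishes item~\ref{lemma:costate_bangbang}.

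Items~\ref{lemma:costate_sign} and~\ref{lemma:costate_junction_secondbracket} I would obtain from a local expansion at a contact time $t_1$: writing $\varepsilon=\sgn x_k(t_1)$ and using $\varepsilon x_k\le M_k$ in a neighborhood, together with the fact that $x_k,\dots,x_1$ are continuous while only $u$ may jump, the lowest-order nonvanishing coefficient of $\varepsilon x_k-M_k$ at $t_1$ must sit at an even offset $2l$ with negative sign, which unwinding the integrator chain is exactly $x_{k-1}=\dots=x_{k-2l+1}=0$ and $\sgn x_{k-2l}(t_1)=-\varepsilon$ -- case (a), of degree $2l$. If the recursion consumes all lower coordinates, $u$ is the first quantity free to be discontinuous; matching the one-sided order-$k$ coefficients of $\varepsilon x_k$ (which must stay $\le M_k$ on both sides at a genuine contact, and $\equiv M_k$ when an arc is entered or left) pins $u(t_1^+)=-M_0\varepsilon=-\tfrac{M_0}{M_k}x_k(t_1)$ and, via the sign alternation down the chain, $u(t_1^-)=(-1)^{k-1}M_0\varepsilon$ -- case (b), whose constrained-arc specialization is item~\ref{lemma:costate_sign}. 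For item~\ref{lemma:lambda1continue} I would invoke Proposition~\ref{prop:junction_condition}: only the costate tied to the active constraint can jump, so for $k\ne1$ every $\lambda_j$ with $j\ne k$, in particular $\lambda_1$, is continuous, while for $k=1$ I would combine $\hamilton\equiv0$ across $t_1$ with $x_1(t_1)=\pm M_1$, the now-known values of $u(t_1^\pm)$, and $\lambda_1\equiv0$ on the constrained side to conclude $\lambda_1(t_1^-)=\lambda_1(t_1^+)=0$, i.e. the junction multiplier vanishes. Finally, item~\ref{lemma:costate_uniqueoptimalcontrol} follows from convexity of the reachable set of the linear plant under the convex state/control constraints: the optimal controls (all sharing $t_\f^*$) form a convex set, so if two differed on a positive-measure set their midpoint would be optimal, hence subject to PMP with its own costate, hence equal to $\pm M_0$ off its singular set; but the midpoint lies strictly inside $(-M_0,M_0)$ wherever the two controls disagree, forcing that set into a union of constrained arcs where $u\equiv0$ is the only admissible value -- a contradiction.

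I expect the main obstacle to be item~\ref{lemma:lambda1continue}, the continuity of $\lambda_1$ across a junction with the \emph{first-order} constraint $\abs{x_1}\le M_1$: this is what makes the global piecewise-polynomial picture (and hence the finiteness of switchings on unconstrained arcs and the uniqueness argument) self-consistent, yet it is exactly the case where Proposition~\ref{prop:junction_condition} permits a jump, so ruling the jump out requires simultaneously using $\hamilton\equiv0$, the sign of the multiplier, and the entry/exit control values from the local analysis -- a genuinely coupled step. A secondary technical point is excluding pathological accumulations of contact times; as in the argument preceding \eqref{eq:derivative_costate_lambdan_zero}, I would dispatch this with Rolle's theorem and Bellman's principle of optimality.
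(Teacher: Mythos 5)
Your handling of items \ref{lemma:costate_bangbang}--\ref{lemma:costate_junction_secondbracket} follows essentially the paper's own route (polynomial costates arc by arc, the ``singular arc must be a constrained arc'' argument via $\hamilton\equiv0$ and $(\lambda_0,\vlambda)\not=\vzero$, and the Taylor/feasibility analysis at contact times via the chain-of-integrator dynamics). The step that does not close as written is item \ref{lemma:costate_uniqueoptimalcontrol}. You take the symmetric midpoint $\frac12(u_1^*+u_2^*)$; on the part of the disagreement set where $u_1^*=M_0$ and $u_2^*=-M_0$ (or vice versa) the midpoint equals $0$, which is an admissible value for an optimal control, so your concluding clause ``forcing that set into a union of constrained arcs where $u\equiv0$ is the only admissible value --- a contradiction'' is not yet a contradiction: the averaged control being $0$ on a constrained arc of the \emph{averaged} trajectory is entirely consistent with $u_1^*=-u_2^*=\pm M_0$ there. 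To repair it you need two further steps: (i) a measure-to-interval step, using the piecewise-polynomial structure of the averaged problem's $\lambda_1$, to pass from ``the averaged control vanishes on a set of positive measure'' to ``the averaged trajectory rides a boundary $\abs{x_j}\equiv M_j$ on an interval''; and (ii) a pinning step: if the average of two feasible trajectories satisfies $x_j\equiv\pm M_j$ on an interval while each of them satisfies $\abs{x_j}\leq M_j$, then both must ride that same boundary there, and differentiating the active constraint gives $u_1^*=u_2^*=0$ a.e.\ on that interval, which finally contradicts the assumed disagreement. The paper sidesteps this whole case with a small trick: it uses the asymmetric combination $\frac34u_1^*+\frac14u_2^*$, for which any two \emph{distinct} values from $\{0,\pm M_0\}$ combine to a value outside $\{0,\pm M_0\}$, so item \ref{lemma:costate_bangbang} alone forces the disagreement set to be null.

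Two smaller points. First, your proof of item \ref{lemma:lambda1continue} for $k=1$ relies on ``$\lambda_1\equiv0$ on the constrained side,'' but Proposition \ref{prop:junction_condition} also permits a junction where $\abs{x_1}$ merely touches $M_1$ at an isolated time, so there may be no constrained side; the case is still within reach of your tools, but the paper's argument is simpler and uniform: if $\lambda_1(t_1^-)>0$ then $u(t_1^-)=-M_0$ forces $x_1>M_1$ just before $t_1$, and symmetrically $\lambda_1(t_1^+)<0$ forces infeasibility just after, so with the junction inequality $\sgn(x_1)\left[\lambda_1(t_1^+)-\lambda_1(t_1^-)\right]\leq0$ both one-sided limits must vanish. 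Second, as written your item \ref{lemma:costate_lambdak_polynomial} invokes item \ref{lemma:lambda1continue} while your item \ref{lemma:lambda1continue} invokes the constrained-arc conclusion of item \ref{lemma:costate_lambdak_polynomial}; the circularity is harmless---on a constrained arc the pointwise law \eqref{eq:bang_singular_bang_law} together with $u\equiv0$ already gives $\lambda_1=0$ everywhere, no continuity or measure argument needed---but you should break the cycle explicitly.
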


The proof of Lemma \ref{lemma:costate} is provided in Appendix \ref{app:proof_lemma_costate}.

\begin{figure}[!t]
\centering
\includegraphics[width=\columnwidth]{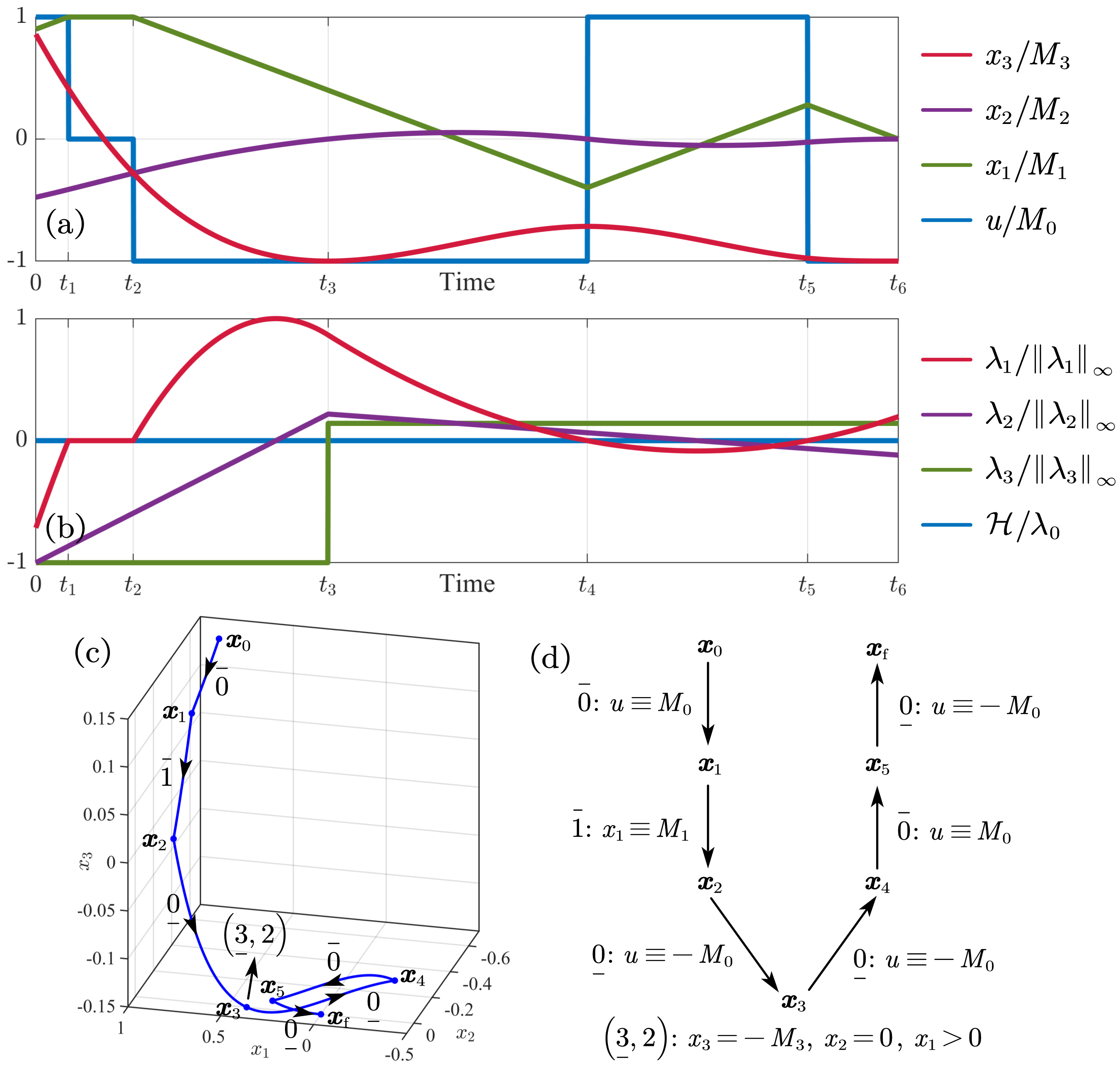}
\caption{A 3rd-order optimal trajectory planned by \cite{wang2024time}, whose ASL is $S=\overline{01}\underline{0}\left(\underline{3},2\right)\underline{0}\overline{0}\underline{0}$. In this example, $\lambda_0>0$, $\vx_0=\left(0.9,-0.715,0.1288\right)$, $\vxf=\left(0,0,-0.15\right)$, and $\vM=\left(1,1,1.5,0.15\right)$. (a) The state vector. (b) The costate vector. (c) The trajectory $\vx=\vx\left(t\right)$. (d) The flow chat of $S$.}
\label{fig:PartIDemo}
\end{figure}

Some notations and definitions are as follows. Denote the set $\mathcal{N}=\N\times\left\{\pm1\right\}$. $\forall s=\left(k,a\right)\in\mathcal{N}$, define the value of $s$ as $\abs{s}=k$, and define the sign of $s$ as $\sgn\left(s\right)=a$. $\forall k\in\N$, denote $\left(k,1\right)$ and $\left(k,-1\right)$ as $\overline{k}$ and $\underline{k}$, respectively. For $s_1,s_2\in\mathcal{N}$, denote $s_1=-s_2$ if $\abs{s_1}=\abs{s_2}$ and $\sgn\left(s_1\right)=-\sgn\left(s_2\right)$. Based on Lemma \ref{lemma:costate}, the system behavior and the tangent marker in our previous work \cite{wang2024time} are defined as follows.

\begin{definition}\label{def:SystemBehavior}
A \textbf{system behavior} of an unconstrained arc or a constrained arc in problem \eqref{eq:optimalproblem} is denoted as follows:
\begin{enumerate}
\item $\overline{0}\,\left(\underline{0}\right)$ is an unconstrained arc $\left\{u\equiv M_0\,\left(-M_0\right)\right\}$.
\item $\overline{k}\,\left(\underline{k}\right)$ is a constrained arc $\left\{x_k\equiv M_k\,\left(-M_k\right)\right\}$.
\end{enumerate}
\end{definition}

\begin{definition}\label{def:TangentMarker}
Assume that $\abs{x_k}$ is tangent to $M_k$ at $t_1$ with a degree $h$, as described in Lemma \ref{lemma:costate}.\ref{lemma:costate_junction_secondbracket}. Then, the \textbf{tangent marker} is denoted as $\left(s,h\right)$, where $s =\left(k,\sgn\left(x_k\left(t_1\right)\right)\right)\in\mathcal{N}$.
\end{definition}

\begin{definition}\label{def:AugmentedSwitchingLaw}
In problem \eqref{eq:optimalproblem}, the \textbf{augmented switching law} (ASL) of an optimal trajectory is $S=s_1s_2\dots s_N$ if the trajectory passes through $s_1,s_2,\dots,s_N$ sequentially, where $\forall 1\leq i\leq N$, $s_i$ is a system behavior or a tangent marker.
\end{definition}

An example is shown in Figs. \ref{fig:PartIDemo}(c-d), where the optimal trajectory is represented as $S=\overline{01}\underline{0}\left(\underline{3},2\right)\underline{0}\overline{0}\underline{0}$. Firstly, the system passes through $\left\{u\equiv M_0\right\}$, $\left\{x_1\equiv M_1\right\}$, and $\left\{u\equiv -M_0\right\}$. Then, $x_3$ is tangent to $-M_3$ at $t_3$. Next, the system passes through $\left\{u\equiv -M_0\right\}$, $\left\{u\equiv M_0\right\}$, and $\left\{u\equiv -M_0\right\}$. Finally, $\vx$ reaches $\vxf$ at $\tf$. It is noteworthy that the ASL does not include the motion time of each stage, which is also necessary to determine the optimal control.

Based on the formulation in Section \ref{subsec:Problem_Formulation} and the main results of \cite{wang2024time} in Section \ref{subsec:MainResultsOfPart1}, the chattering phenomenon in problem \eqref{eq:optimalproblem} can be investigated in the following sections.

\section{Main Results}\label{sec:MainResults}

This section provides main results of this paper. Section \ref{subsec:ChatteringTheory_MainResults} introduces the established theoretical framework for chattering in problem \eqref{eq:optimalproblem}, overcoming lack of mathematical tools. Section \ref{subsec:ExistenceChattering_MainResults} provides the existence of chattering in problem \eqref{eq:optimalproblem}, rectifying the long misconception on the time-optimality of S-shaped trajectories.

\subsection{Theoretical Framework for Chattering in Problem \eqref{eq:optimalproblem}}\label{subsec:ChatteringTheory_MainResults}

As pointed out in Section \ref{sec:Introduction}, neither proofs on non-existence nor counterexamples to the chattering phenomenon in the classical problem \eqref{eq:optimalproblem} have been provided so far. However, the predominant technology for chattering analysis based on Lagrangian manifolds \cite{zelikin2012theory} is difficult to directly apply to \eqref{eq:optimalproblem} for the following reasons. Note that $\hamilton=\hamilton_0+\hamilton_1u$ is affine in $u$, where $\hamilton_1=\lambda_1$. By \eqref{eq:derivative_costate}, $\forall i\in\N^*$, $\frac{\mathrm{d}^i \hamilton_1}{\mathrm{d} t^i}$ is independent of $u$; hence, the Hamiltonian is singular of order $\infty$. So the technology in \cite{zelikin2012theory} is difficult to directly apply to problem \eqref{eq:optimalproblem}.

This paper develops a specialized theoretical framework, i.e., Theorem \ref{thm:ChatteringPhenomenaBehavior}, for chattering in problem \eqref{eq:optimalproblem}. In particular, an inequality state constraint $s\in\mathcal{N}$ refers to $\sgn\left(s\right)x_{\abs{s}}\leq M_{\abs{s}}$.

\begin{theorem}\label{thm:ChatteringPhenomenaBehavior}
Assume that chattering occurs on a left-side neighborhood of $t_\infty$ in problem \eqref{eq:optimalproblem} where $t_\infty \in\left(0,t_\f\right)$ is the limit time. Then, $\exists t_0<t_\infty$ and $s \in\mathcal{N}$, s.t. $s$ is the unique state constraint allowed active for some $t\in\left[t_0,t_\infty\right]$. Furthermore, the following conclusions hold.
\begin{enumerate}
\item\label{thm:ChatteringPhenomenaBehavior_1sn} $1<\abs{s}<n$.

\item\label{thm:ChatteringPhenomenaBehavior_sh} $\exists\left\{t_i\right\}_{i=1}^\infty\subset\left(t_0,t_\infty\right)$ increasing monotonically and converging to $t_\infty$, s.t. $s$ is active on $\left\{t_i\right\}_{i=1}^\infty$ and is inactive except $\left\{t_i\right\}_{i=1}^\infty$. Furthermore, $\left\{t_i\right\}_{i=1}^\infty$ is the set of junction time.
\item\label{thm:ChatteringPhenomenaBehavior_sgnxs} $\forall t\in\left[t_0,t_\infty\right]$, $\sgn\left(x_{\abs{s}}\left(t\right)\right)\equiv\sgn\left(s\right)$.
\item\label{thm:ChatteringPhenomenaBehavior_sgn_x_lambad_sk} $\forall \abs{s}<k\leq n$, $t\in\left(t_0,t_\infty\right)$, it holds that $\sgn\left(x_k\left(t\right)\right)\equiv\mathrm{const}$ and $\sgn\left(\lambda_k\left(t\right)\right)\equiv\mathrm{const}$.
\item\label{thm:ChatteringPhenomenaBehavior_switchingtime} $\forall 1\leq k\leq\abs{s}$, $i\in\N^*$, during $\left(t_i,t_{i+1}\right)$, $\lambda_k$ has at most $\left(\abs{s}-k+1\right)$ roots and $u$ switches for at most $\abs{s}$ times.
\item\label{thm:ChatteringPhenomenaBehavior_limit} $\forall 1\leq k\leq\abs{s}$, $t\in\left(t_0,t_\infty\right)$, it holds that:
\begin{enumerate}
\item $\lambda_k$ crosses 0 for infinitely many times during $\left(t,t_\infty\right)$. $\forall i\in\N^*$, $\sgn\left(s\right)\lambda_{\abs{s}}$ increases monotonically for $\left(t_i,t_{i+1}\right)$ and jumps decreasingly at $t_i$.
\item $\sup_{\tau\in\left[t,t_\infty\right]}\abs{x_k\left(\tau\right)-x_k\left(t_\infty\right)}=\mathcal{O}(\left(t_\infty-t\right)^k)$, and $\sup_{\tau\in\left[t,t_\infty\right]}\abs{\lambda_k\left(\tau\right)}=\mathcal{O}(\left(t_\infty-t\right)^{\abs{s}-k+1})$.
\item $\forall 1\leq k<\abs{s}$, $\lim_{t\to t_\infty}x_k\left(t\right)=x_k\left(t_\infty\right)=0$. For $\abs{s}$, $\lim_{t\to t_\infty}x_{\abs{s}}\left(t\right)=x_{\abs{s}}\left(t_\infty\right)=M_{\abs{s}}\sgn\left(s\right)$. $\forall 1\leq k\leq\abs{s}$, $\lim_{t\to t_\infty}\lambda_k\left(t\right)=\lambda_k\left(t_\infty\right)=0$.
\end{enumerate}
\end{enumerate}
Similar conclusions hold for a right-side neighborhood of $t_\infty$.
\end{theorem}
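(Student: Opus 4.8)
The plan is to convert the chattering hypothesis into an analytic statement about $\lambda_1$, isolate a single active state constraint, and then propagate every structural claim from the costate recursion \eqref{eq:derivative_costate} together with the rigidity of junctions. By \eqref{eq:bang_singular_bang_law}, $u$ switches exactly where $\lambda_1$ changes sign, so chattering at $t_\infty$ forces $\lambda_1$ to have zeros accumulating at $t_\infty$. On any maximal unconstrained arc, $\lambda_1$ is a polynomial of degree at most $n-1$ by Lemma \ref{lemma:costate}.\ref{lemma:costate_lambdak_polynomial}, and it cannot vanish identically there (else $\dot\lambda_1=-\lambda_2$ forces all $\lambda_k\equiv0$, whence \eqref{eq:hamilton_equiv_0} gives $\lambda_0=0$, contradicting $(\lambda_0,\vlambda)\neq0$), hence has at most $n-1$ zeros there. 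So there are infinitely many junctions accumulating at $t_\infty$; by Proposition \ref{prop:junction_condition} each is a time at which some signed state constraint in $\mathcal{N}$ is touched or entered/left, so by pigeonhole some $s\in\mathcal{N}$ occurs at infinitely many junctions in $(t_0,t_\infty)$ for every $t_0<t_\infty$.

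The crux — and the step I expect to be the main obstacle — is to upgrade this to conclusions \ref{thm:ChatteringPhenomenaBehavior_1sn} and \ref{thm:ChatteringPhenomenaBehavior_sh}: that a \emph{single} $s$, with $1<|s|<n$, is active, and only on a discrete sequence $\{t_i\}\uparrow t_\infty$ with no constrained arc near $t_\infty$. The levers are that a junction involving the constraint indexed by $k$ perturbs only $\lambda_k$, with the sign-definite jump $\sgn(x_k)[\lambda_k(t_1^+)-\lambda_k(t_1^-)]\le0$ of \eqref{eq:junction_condition}, while $\lambda_1$ stays continuous (Lemma \ref{lemma:costate}.\ref{lemma:lambda1continue}) and each $\lambda_j$ keeps its degree bound (Lemma \ref{lemma:costate}.\ref{lemma:costate_lambdak_polynomial}). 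A constrained arc $\{x_k\equiv M_k\}$ near $t_\infty$ is excluded because on it $\lambda_1\equiv0$, so no switches occur inside and $u$ leaves with a fixed sign (Lemma \ref{lemma:costate}.\ref{lemma:costate_sign}), which a counting argument shows can happen only finitely often before $t_\infty$. Excluding a second infinitely-often constraint is the delicate part: using the tangency dichotomy of Lemma \ref{lemma:costate}.\ref{lemma:costate_junction_secondbracket} (which forces $x_{|s|-1}=0$ at each tangency of $x_{|s|}$ and pins the local sign of $x_{|s|}$) and descending the costate recursion from the largest index ever activated, one shows the sign pattern of $(\lambda_{|s|},\dots,\lambda_1)$ between consecutive touches is too rigid to admit an interleaved second constraint — this combinatorial bookkeeping of how sign-definite jumps at different indices interact with the degree bounds of Lemma \ref{lemma:costate}.\ref{lemma:costate_lambdak_polynomial} is the technical heart. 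The endpoints then close: $|s|=1$ would make every junction cost-free for $\lambda_1$, so $\lambda_1$ would be one degree-$(n-1)$ polynomial near $t_\infty$ with finitely many zeros, contradicting chattering; $|s|=n$ is ruled out by \eqref{eq:derivative_costate_lambdan_zero} and the Rolle/BPO argument of Section \ref{subsec:Problem_Formulation}, since $x_n$ touching $\pm M_n$ infinitely often would force $\vx(t_\infty)=\pm M_n\ve_n$, contradicting BPO.

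Once $s$ is isolated, the remaining conclusions are a bootstrap. As $\eta_k\equiv0$ on $[t_0,t_\infty]$ for $k>|s|$, each such $\lambda_k$ is a genuine polynomial of degree at most $n-k$ by \eqref{eq:derivative_costate}; integrating $\dot\lambda_k=-\lambda_{k+1}$ downward from $\lambda_n\equiv\const$ and shrinking $t_0$ finitely often gives $\sgn(\lambda_k)\equiv\const$ on $(t_0,t_\infty)$ for $k>|s|$, and integrating $\dot x_k=x_{k-1}$ upward from $\sgn(x_{|s|})\equiv\sgn(s)$ gives $\sgn(x_k)\equiv\const$ there too — conclusion \ref{thm:ChatteringPhenomenaBehavior_sgn_x_lambad_sk}. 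On each arc $(t_i,t_{i+1})$, $\dot\lambda_{|s|}=-\lambda_{|s|+1}$ then has constant sign, so $\lambda_{|s|}$ is monotone with at most one root there; integrating once more per step gives $\lambda_k$ at most $|s|-k+1$ roots on $(t_i,t_{i+1})$, hence $\lambda_1$ at most $|s|$ roots and $u$ at most $|s|$ switches — conclusion \ref{thm:ChatteringPhenomenaBehavior_switchingtime} — while the monotone-up/jump-down behavior of $\sgn(s)\lambda_{|s|}$ in conclusion \ref{thm:ChatteringPhenomenaBehavior_limit}(a) combines this monotonicity with the jump sign of \eqref{eq:junction_condition} and the pinned sign of $\lambda_{|s|+1}$, and no $\lambda_k$ ($k\le|s|$) can be eventually one-signed near $t_\infty$, else $\lambda_1$ would be too, so each crosses $0$ infinitely often. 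For the limits, $x_{|s|}(t_\infty)=M_{|s|}\sgn(s)$ by continuity, and since $x_{|s|-1}$ vanishes at every $t_i$, applying Rolle recursively shows each $x_k$, $1\le k<|s|$, has zeros accumulating at $t_\infty$, so $x_k(t_\infty)=0$; substituting these limits into $x_k(\tau)-x_k(t_\infty)=-\int_\tau^{t_\infty}(x_{k-1}(\sigma)-x_{k-1}(t_\infty))\,\mathrm{d}\sigma$ and using $|u|\le M_0$ yields $\sup_{\tau\in[t,t_\infty]}|x_k(\tau)-x_k(t_\infty)|=\mathcal{O}((t_\infty-t)^k)$ — conclusion \ref{thm:ChatteringPhenomenaBehavior_limit}(b,c) — and in particular $|x_{|s|}(t)-M_{|s|}\sgn(s)|<M_{|s|}$ near $t_\infty$, which is conclusion \ref{thm:ChatteringPhenomenaBehavior_sgnxs}. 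The costate magnitudes $\mathcal{O}((t_\infty-t)^{|s|-k+1})$ and limits $\lambda_k(t_\infty)=0$ follow analogously from \eqref{eq:derivative_costate} with these polynomial-in-$(t_\infty-t)$ bounds, together with $\hamilton\equiv0$ to pin $\lim_{t\to t_\infty}\lambda_1=0$. Finally, the right-sided statement follows from the time-reversal symmetry $x_k(t)\mapsto(-1)^{n-k}x_k(\tf-t)$ of problem \eqref{eq:optimalproblem}.
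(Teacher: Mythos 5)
Your bootstrap for conclusions 3--6 (pinning $\sgn(\lambda_{|s|+1})$, monotone-up/jump-down behavior of $\sgn(s)\lambda_{|s|}$, root counting down the costate chain, Rolle on the states, and integration for the $\mathcal{O}((t_\infty-t)^k)$ and $\mathcal{O}((t_\infty-t)^{|s|-k+1})$ rates) matches the paper's Propositions on non-monotonicity, switching counts, and convergence, and your exclusion of $|s|=1$ via continuity of $\lambda_1$ at tangencies is fine. But the structural core of the theorem is exactly where you defer: you never prove uniqueness of the chattering constraint, you only announce that a ``combinatorial bookkeeping of sign patterns'' would show a second interleaved constraint is impossible. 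That is the claim the whole theorem rests on (it is what licenses ``get rid of all other constraints'' in everything that follows), and the paper's actual proof is not a sign-pattern argument at all: it is a short kinematic one. If two distinct signed constraints $s_1\neq s_2$ were each active at times accumulating at $t_\infty$, then either (a) $|s_1|>|s_2|$, in which case Rolle applied between consecutive touch times of $s_1$ produces zeros of $x_{|s_2|}$ accumulating at $t_\infty$, while $|x_{|s_2|}|=M_{|s_2|}$ also holds at times accumulating at $t_\infty$, forcing $x_{|s_2|}$ to traverse the fixed gap $M_{|s_2|}$ in vanishing time; or (b) $s_1=-s_2$, forcing $x_{|s_1|}$ to traverse $2M_{|s_1|}$ in vanishing time. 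Both contradict $\|\dot x_k\|_\infty<\infty$. Your proposed route is not only missing but aimed at a much harder target than necessary.

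Two further steps are asserted without a workable argument. First, the no-constrained-arc claim (conclusion 2): you say a ``counting argument'' shows boundary arcs occur only finitely often near $t_\infty$, but no counting contradiction exists --- infinitely many boundary arcs of summable lengths interleaved with switching unconstrained arcs is perfectly consistent with infinitely many switches. The paper instead shows that after exiting a boundary arc of $s$ at $t_2$, the jump sign in \eqref{eq:junction_condition} together with $\dot\lambda_{|s|}=-\lambda_{|s|+1}>0$ permits at most one switch of $u$ before the next touch at $t_3$, and then the boundary conditions $x_{|s|}(t_2)=x_{|s|}(t_3)=\sgn(s)M_{|s|}$, $x_k(t_2)=0$ for $k<|s|$, $x_{|s|-1}(t_3)=0$ fed into Proposition \ref{prop:system_dynamics} yield $(t_3-t_2)^k=2(t_3-\tau_1)^k$ for two consecutive exponents $k\in\{|s|,|s|-1\}$, which is impossible. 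Second, your exclusion of $|s|=n$ ``by BPO'' does not follow: a single accumulation point of $\{x_n=\pm M_n\}$ with $\vx(t_\infty)=\pm M_n\ve_n$ is exactly what the Section \ref{subsec:Problem_Formulation} argument permits (it only rules out two accumulation points), so no BPO contradiction arises. The correct route, which you gesture at by citing \eqref{eq:derivative_costate_lambdan_zero} but never use, is that $\dot\lambda_n=0$ a.e.\ makes $\lambda_{|s|}=\lambda_n$ non-increasing given the decreasing junction jumps, contradicting the non-monotonicity of $\lambda_{|s|}$ that chattering requires (your own observation that no $\lambda_k$, $k\le|s|$, can be eventually one-signed). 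Until the uniqueness and no-boundary-arc steps are actually proved, the proposal does not establish the theorem.
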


\begin{remark}
$\left[t_0,t_\infty\right]$ is defined as a \textbf{chattering period}. Theorem \ref{thm:ChatteringPhenomenaBehavior} offers a visualization of chattering, as shown in Fig. \ref{fig:chattering_case}(c). Infinitely many unconstrained arcs are connected at the boundary of $s$ during $\left(t_0,t_\infty\right)$. The trajectory $\vx$ limits to the constrained arc $s$ at $t_\infty$. In other words, cases shown in Figs. \ref{fig:chattering_case}(a-b) are impossible, where more than one constraints are allowed active or constrained arcs exist in the chattering period, respectively.
\end{remark}

\begin{remark}
In a chattering period, the unique state constraint allowed active is defined as a \textbf{chattering constraint}. The uniqueness of chattering constraint is significant for investigating the chattering phenomenon. One can consider chattering constraints one by one, and get rid of all other constraints.
\end{remark}

\begin{proof}[Proof of Theorem \ref{thm:ChatteringPhenomenaBehavior}]
The uniqueness of the chattering constraint is proved in Section \ref{subsec:UniquenessOfStateConstraintChattering}. In Theorem \ref{thm:ChatteringPhenomenaBehavior}, the first four conclusions are proved in Section \ref{subsec:NecessaryConditionsStateConstraintChattering}, and the last two conclusions are proved in Section \ref{subsec:BehaviorsOfCostatesDuringChatteringPeriod}.
\end{proof}

\begin{figure}[!t]
\centering
\includegraphics[width=\columnwidth]{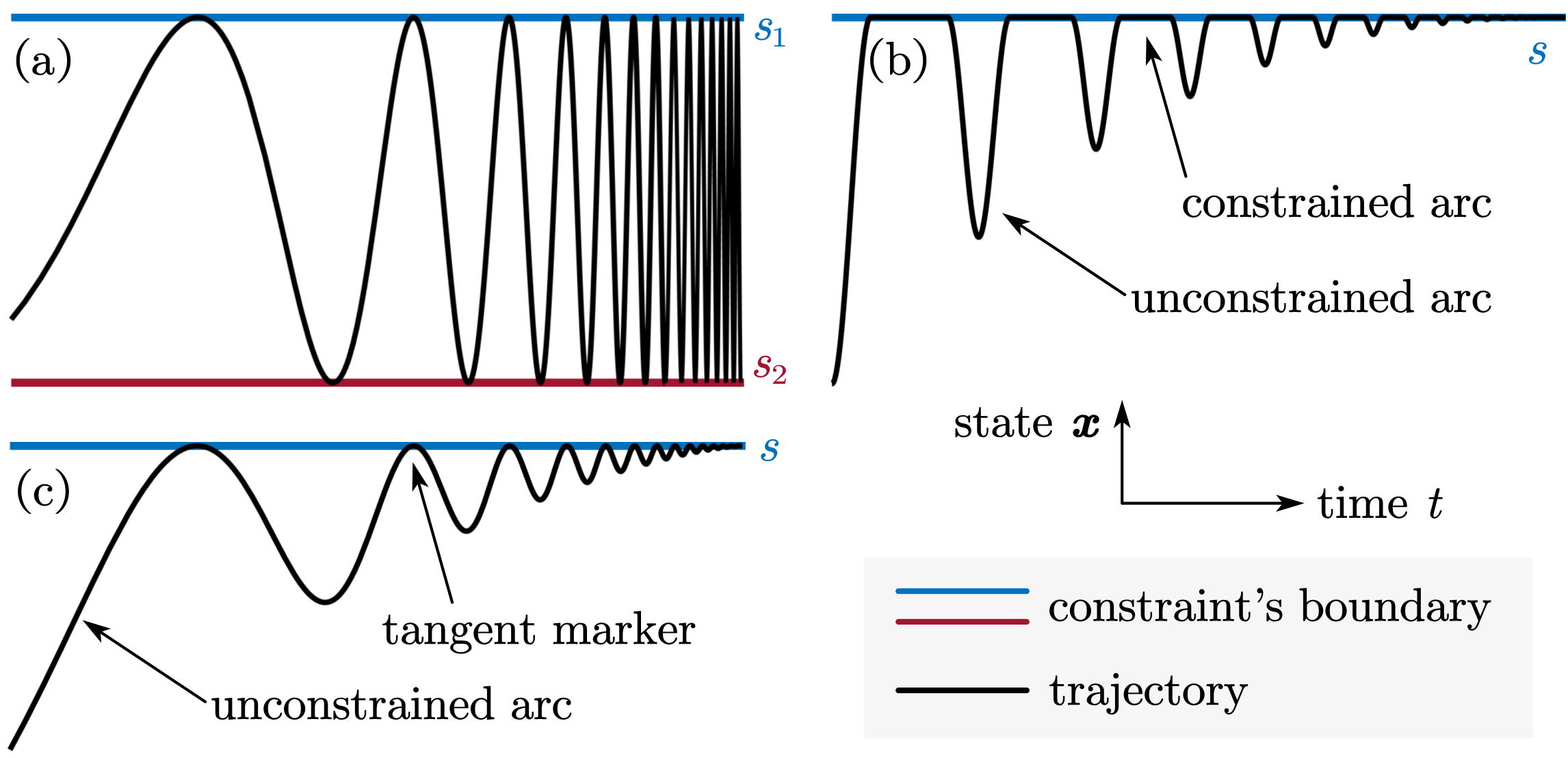}
\caption{Chattering arcs in problem \eqref{eq:optimalproblem}. (a) Multiple active constraints. (b) Constrained arcs. (c) Unconstrained arcs connected at the constraint's boundary. (a) and (b) are not allowed, while (c) exists in chattering periods.}
\label{fig:chattering_case}
\end{figure}

\subsection{Existence of Chattering in Problem \eqref{eq:optimalproblem}}\label{subsec:ExistenceChattering_MainResults}

As reviewed in Section \ref{sec:Introduction}, there exists a longstanding oversight of the chattering phenomenon in problem \eqref{eq:optimalproblem} due to the recursive structure of the chain-of-integrator, i.e., \eqref{eq:optimalproblem_dotxk} and \eqref{eq:optimalproblem_dotx1}. It is widely accepted that $\vx$ should reach the maximum velocity $\left\{\abs{x_{n-1}}\equiv M_{n-1}\right\}$ as fast as possible to achieve minimal time of the whole trajectory, resulting in the well-known S-shaped trajectories without chattering. Theorem \ref{thm:Chattering_n4s3_optimal_allcases} proves the existence of chattering in problem \eqref{eq:optimalproblem} when $n\geq4$, rectifying the above longstanding misconception concerning the optimality of S-shaped trajectories. Based on Theorem \ref{thm:ChatteringPhenomenaBehavior}, chattering constraints can be investigated one by one.

\begin{theorem}\label{thm:Chattering_n4s3_optimal_allcases}
Consider the chattering constraint $s$ in \eqref{eq:optimalproblem}.
\begin{enumerate}
\item\label{thm:Chattering_n4s3_optimal_allcases_loworder} Chattering does not occur if $n\leq3$ or $n=4$ and $\abs{s}\not=3$.
\item\label{thm:Chattering_n4s3_optimal_allcases_n4s3} The case where $n=4$ and $\abs{s}=3$ represents problems of the lowest order that allow chattering. The unique chattering mode is given in Theorems \ref{thm:Chattering_n4s3_equivalent} and \ref{thm:Chattering_n4s3_equivalent_optimal_y11}.
\item\label{thm:Chattering_n4s3_optimal_allcases_highorder} Chattering phenomena can occur when $n\geq5$.
\end{enumerate}
\end{theorem}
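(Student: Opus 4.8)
The plan is to derive Theorem~\ref{thm:Chattering_n4s3_optimal_allcases} from the necessary conditions of Theorem~\ref{thm:ChatteringPhenomenaBehavior} together with the junction analysis of Lemma~\ref{lemma:costate}, to borrow the explicit construction for the $n=4$, $\abs{s}=3$ case, and to lift that construction to higher orders.

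For Part~\ref{thm:Chattering_n4s3_optimal_allcases_loworder} I would first observe that by Theorem~\ref{thm:ChatteringPhenomenaBehavior}.\ref{thm:ChatteringPhenomenaBehavior_1sn} a chattering constraint satisfies $1<\abs{s}<n$, which is vacuous for $n\le2$ and leaves only $\abs{s}=2$ to exclude when $n\in\{3,4\}$. So suppose $\abs{s}=2$, let $[t_0,t_\infty]$ be a chattering period with junction times $\{t_i\}$ from Theorem~\ref{thm:ChatteringPhenomenaBehavior}.\ref{thm:ChatteringPhenomenaBehavior_sh}; by the remark following Theorem~\ref{thm:ChatteringPhenomenaBehavior} each sub-arc $(t_i,t_{i+1})$ is unconstrained, so $\eta_1\equiv\eta_2\equiv0$ there, $\lambda_1$ is $C^2$ with $\dot\lambda_1=-\lambda_2$ and $\ddot\lambda_1=-\dot\lambda_2=\lambda_3$, and $\lambda_1$ is continuous at $t_i,t_{i+1}$ by Lemma~\ref{lemma:costate}.\ref{lemma:lambda1continue}. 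At each $t_i$, $\abs{x_2}$ is tangent to $M_2$; since case~(a) of Lemma~\ref{lemma:costate}.\ref{lemma:costate_junction_secondbracket} is impossible for $k=2$ (it would need $l<1$), case~(b) holds, giving $x_1(t_i)=0$ and $u(t_i^-)=u(t_i^+)=-M_0\sgn(s)$. From Theorem~\ref{thm:ChatteringPhenomenaBehavior}.\ref{thm:ChatteringPhenomenaBehavior_limit}(a), $\sgn(s)\lambda_2$ is nondecreasing on $(t_i,t_{i+1})$, hence $\sgn(s)\lambda_3=-\sgn(s)\dot\lambda_2\le0$, so $\sgn(s)\lambda_1$ is concave on $[t_i,t_{i+1}]$. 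A singular sub-arc cannot occur, since $\lambda_1\equiv0$ on an interval would force $\lambda_2=\cdots=\lambda_n=0$ there and hence $\hamilton=\lambda_0$, which cannot vanish; so \eqref{eq:bang_singular_bang_law} with $u(t_i^+)=u(t_{i+1}^-)=-M_0\sgn(s)$ gives $\sgn(s)\lambda_1\ge0$ near both endpoints, and concavity of $\sgn(s)\lambda_1$ then forces $\sgn(s)\lambda_1\ge0$ on all of $[t_i,t_{i+1}]$. Consequently $u\equiv-M_0\sgn(s)$ a.e.\ on $(t_i,t_{i+1})$, so $x_1(t_{i+1})=-M_0\sgn(s)(t_{i+1}-t_i)\neq0$, contradicting $x_1(t_{i+1})=0$. (This argument in fact excludes $\abs{s}=2$ for every $n$.)

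Part~\ref{thm:Chattering_n4s3_optimal_allcases_n4s3} then follows: ``lowest order'' is immediate from Part~\ref{thm:Chattering_n4s3_optimal_allcases_loworder}, while the existence of a chattering optimal trajectory for $n=4$, $\abs{s}=3$ and the uniqueness of its self-similar mode (with decay rate $\alpha^*$) are precisely Theorems~\ref{thm:Chattering_n4s3_equivalent} and~\ref{thm:Chattering_n4s3_equivalent_optimal_y11}, which I would invoke. For Part~\ref{thm:Chattering_n4s3_optimal_allcases_highorder} I would lift that trajectory: take the instance of \eqref{eq:optimalproblem} realizing it, with optimal $(x_1,\dots,x_4,u)$ on $[0,\tf^{(4)}]$; for $n\ge5$ set $M_5=\dots=M_n=\infty$, define $x_k$ for $5\le k\le n$ by $\dot x_k=x_{k-1}$ with arbitrary fixed initial values, and let $\vx_0,\vxf$ be the resulting endpoint vectors. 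Restricting any feasible $n$th-order trajectory to its first four coordinates yields a feasible trajectory of the $n=4$ instance, so its duration is at least $\tf^{(4)}$; and the lifted trajectory is feasible for order $n$ with duration $\tf^{(4)}$, hence time-optimal, and it chatters. (One can equivalently check directly that $\lambda_0,\lambda_1,\dots,\lambda_4$ from the $n=4$ extremal together with $\lambda_5=\dots=\lambda_n\equiv0$ satisfy \eqref{eq:derivative_costate}, \eqref{eq:bang_singular_bang_law}, \eqref{eq:hamilton_equiv_0} and the junction conditions.)

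I expect the only delicate point inside this particular proof to be the degenerate configurations in Part~\ref{thm:Chattering_n4s3_optimal_allcases_loworder}---$\lambda_3\equiv0$, a possible singular sub-arc, or $\lambda_1$ vanishing at a junction---but each collapses once one notes that a concave function nonnegative at the two endpoints of an interval is nonnegative throughout it. The genuine difficulty, namely actually exhibiting the $n=4$, $\abs{s}=3$ chattering solution and determining $\alpha^*$, is isolated in Theorems~\ref{thm:Chattering_n4s3_equivalent}--\ref{thm:Chattering_n4s3_equivalent_optimal_y11}, on which Parts~\ref{thm:Chattering_n4s3_optimal_allcases_n4s3} and~\ref{thm:Chattering_n4s3_optimal_allcases_highorder} ultimately rest.
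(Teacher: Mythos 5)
Your proposal is correct, and for Parts~\ref{thm:Chattering_n4s3_optimal_allcases_n4s3}--\ref{thm:Chattering_n4s3_optimal_allcases_highorder} it coincides with the paper: Part~\ref{thm:Chattering_n4s3_optimal_allcases_n4s3} is, there as here, a pointer to Theorems~\ref{thm:Chattering_n4s3_equivalent} and~\ref{thm:Chattering_n4s3_equivalent_optimal_y11}, and your lifting construction for Part~\ref{thm:Chattering_n4s3_optimal_allcases_highorder} is the paper's, with your projection argument making explicit the optimality claim the paper only asserts. The genuine divergence is Part~\ref{thm:Chattering_n4s3_optimal_allcases_loworder}. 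The paper excludes $n=3$ by a BPO time-comparison (an unconstrained sub-arc between consecutive tangencies of $x_2$ with $M_2$ takes strictly longer than sliding along the boundary), and excludes $n=4$, $\abs{s}=2$ at the state level: feasibility pins the inter-junction control to the pattern $(-M_0,+M_0,-M_0)$ with durations $(\tau_i,2\tau_i,\tau_i)$, uniqueness of the optimal control plus the implicit function theorem forces the recursion $f_{\mathrm{c}}\left(\tau_{i+2};\tau_i,\tau_{i+1}\right)=0$, and Stolz--Ces\`aro with Raabe's test gives $\sum_i\tau_i=\infty$, contradicting finiteness of the chattering period. You argue instead at the costate level: on each unconstrained sub-arc $\ddot\lambda_1=\lambda_3$ has fixed sign $-\sgn\left(s\right)$ (from the monotonicity of $\sgn\left(s\right)\lambda_{\abs{s}}$ in Theorem~\ref{thm:ChatteringPhenomenaBehavior}.\ref{thm:ChatteringPhenomenaBehavior_limit}), Lemma~\ref{lemma:costate}.\ref{lemma:costate_junction_secondbracket} (whose case (a) is indeed void for $k=2$) gives $u\left(t_i^+\right)=u\left(t_{i+1}^-\right)=-M_0\sgn\left(s\right)$, hence $\sgn\left(s\right)\lambda_1\geq0$ near both ends of the sub-arc, and concavity propagates this to the whole sub-arc, forcing $u\equiv-M_0\sgn\left(s\right)$ a.e.\ and contradicting $x_1\left(t_{i+1}\right)=0$; your dismissal of a singular sub-arc is also sound (and is anyway covered by Proposition~\ref{prop:chattering_nosystembehavior}). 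This route is valid, shorter, treats $n=3$ and $n=4$ uniformly, and in fact rules out $\abs{s}=2$ as a chattering constraint for every $n$, which is stronger than what the paper records; what the paper's longer computation buys is quantitative information about the would-be inter-junction structure (the recursion and the asymptotic decay of $\tau_i$), which your argument does not produce, and it does not lean on the sign information of Lemma~\ref{lemma:costate}.\ref{lemma:costate_junction_secondbracket} and Theorem~\ref{thm:ChatteringPhenomenaBehavior}.\ref{thm:ChatteringPhenomenaBehavior_limit}, on which your contradiction entirely rests.
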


\begin{proof}
Theorems \ref{thm:Chattering_n4s3_optimal_allcases}.\ref{thm:Chattering_n4s3_optimal_allcases_loworder} and \ref{thm:Chattering_n4s3_optimal_allcases}.\ref{thm:Chattering_n4s3_optimal_allcases_n4s3} are proved in Sections \ref{sec:ChatteringPhenomena3rdOrder} and \ref{sec:ChatteringPhenomena4thOrder}, respectively. Arbitrarily consider a chattering optimal trajectory of 4th-order, i.e., $\vx=\hat\vx\left(t\right)$ and $u=\hat{u}\left(t\right)$, $t\in\left[0,\hat{t}_\f\right]$. For $n\geq5$, a chattering optimal trajectory of $n$th-order can be constructed through the integration of $\vx=\hat\vx\left(t\right)$. For example, let $\vx_{0,1:4} =\hat{x}\left(0\right)$, $\vx_{\f,1:4} =\hat{x}\left(t_\f\right)$, and $\vx_{0,5:n} =\vzero$. $\vx_{\f,5:n}$ is given by $\vx_0$ and $\hat{u}\left(t\right)$. $\vM_{5:n}$ is large enough. Then, the control $u=\hat{u}\left(t\right)$, $t\in\left[0,\hat{t}_\f\right]$ is optimal in the above problem of order $n\geq5$. Note that $ \hat{u}$ chatters. So Theorem \ref{thm:Chattering_n4s3_optimal_allcases}.\ref{thm:Chattering_n4s3_optimal_allcases_highorder} holds.
\end{proof}

\subsection{Chattering Mode of Problem \eqref{eq:optimalproblem} when $n=4$ and $\abs{s}=3$}\label{subsec:ChatteringMode}

How the optimal control chatters is a key issue to solve problem \eqref{eq:optimalproblem}. This paper provides the trajectories in chattering period for $n=4$ and $\abs{s}=3$. Specifically, a unique \textbf{chattering mode} exists that once $\abs{x_3}$ is tangent to $M_3$, the optimal trajectory chatters following the same way.

Based on Theorem \ref{thm:ChatteringPhenomenaBehavior}, this section only needs to consider $s=\overline{3}$ as the unique state constraint and gets rid of all other constraints in problem \eqref{eq:optimalproblem}. Since $\vx$ is tangent to $\left\{x_3=M_3\right\}$ for infinitely many times and finally reaches the constrained arc $\left\{x_3\equiv M_3\right\}$, by BPO, the following problem is considered:
\begin{IEEEeqnarray}{rl}\label{eq:optimalproblem_n4s3}
\min\quad& J=\int_{0}^{\tf}\mathrm{d}t=\tf,\IEEEyesnumber\IEEEyessubnumber*\\
\st\quad&\dot{x}_4=x_3,\,\dot{x}_3=x_2,\,\dot{x}_2=x_1,\,\dot{x}_1=u,\\
&\vx\left(0\right)=\vx_0=\left(x_{0,1},0,M_3,x_{0,4}\right),\label{eq:optimalproblemn4s3_x0}\\
&\vx\left(\tf\right)=\vx_\f=\left(0,0,M_3,x_{\f4}\right),\label{eq:optimalproblemn4s3_xf}\\
&x_3\left(t\right)\leq M_3,\,\abs{u\left(t\right)}\leq M_0,\,\forall t\in\left[0,t_\f\right]\label{eq:optimalproblemn4s3_x_constraint}
\end{IEEEeqnarray}
where $x_{0,1}<0$ and $t_0=0$. To solve problem \eqref{eq:optimalproblem_n4s3}, the following parameter-free infinite-horizon problem is constructed:
\begin{IEEEeqnarray}{rl}\label{eq:optimalproblem_n4s3_equivalent}
\min\quad& \widehat{J}=\int_{0}^{\infty}y_3\left(\tau\right)\,\mathrm{d}\tau,\IEEEyesnumber\IEEEyessubnumber*\\
\st\quad&\dot{y}_3=y_2,\,\dot{y}_2=y_1,\,\dot{y}_1=v,\\
&\vy\left(0\right)=\vy_0=\left(1,0,0\right),\\
&y_3\left(\tau\right)\geq 0,\,\abs{v\left(\tau\right)}\leq 1,\,\forall\tau\in\left(0,\infty\right).\label{eq:optimalproblem_n4s3_equivalent_y3}
\end{IEEEeqnarray}

Evidently, $\inf \widehat{J}<\infty$ since the time-optimal trajectory between $\vy_0$ and $\vzero$ is a feasible solution with $\widehat{J}<\infty$. Denote $\tau_\infty=\arg\min\left\{\tau\in\left(0,\infty\right):\vy\left(\tau\right)=\vzero\right\}\in\overline{\R}_{++}$. Evidently, if $\tau_\infty<\infty$, then $\vy\equiv\vzero$ on $\left(\tau_\infty,\infty\right)$. An equivalent relationship between problems \eqref{eq:optimalproblem_n4s3} and \eqref{eq:optimalproblem_n4s3_equivalent} is provided in Theorem \ref{thm:Chattering_n4s3_equivalent}.

\begin{theorem}\label{thm:Chattering_n4s3_equivalent}
Assume that problem \eqref{eq:optimalproblem_n4s3_equivalent} has an optimal solution $v=v^*\left(\tau\right)$ with the trajectory $\vy=\vy^*\left(\tau\right)$, satisfying $\tau_\infty^*\triangleq\arg\min\left\{\tau\in\left(0,\infty\right):\vy^*\left(\tau\right)=\vzero\right\}<\infty$. If in problem \eqref{eq:optimalproblem_n4s3},
\begin{equation}\label{eq:optimalsolution_equivalent_n4s3_condition}
x_{\f4}-x_{0,4}\geq -\frac{x_{0,1}}{M_0}\left(\frac{x_{0,1}^3}{M_0^2}\widehat{J}^*+M_3\tau_\infty^*\right),\end{equation}

then the optimal solution of problem \eqref{eq:optimalproblem_n4s3} is as follows:
\begin{IEEEeqnarray}{l}\label{eq:optimalsolution_equivalent_n4s3}
t_\infty^*=-\frac{x_{0,1}}{M_0}\tau_\infty^*\in\left(0,\infty\right),\IEEEyesnumber\IEEEyessubnumber*\label{eq:optimalsolution_equivalent_n4s3_tinfty}\\
t_\f^*=\frac{x_{\f4}-x_{0,4}}{M_3}+\frac{x_{0,1}^4\widehat{J}^*}{M_0^3M_3}\geq t_\infty^*.\label{eq:optimalsolution_equivalent_n4s3_tf}
\end{IEEEeqnarray}
$\forall t\in\left(0,t_\f^*\right)$, the following expressions hold a.e.:
\begin{IEEEeqnarray}{l}
u^*\left(t\right)=-M_0v^*\left(-\frac{M_0}{x_{0,1}}t\right),\IEEEyessubnumber*\label{eq:optimalsolution_equivalent_n4s3_u}\\
x_1^*\left(t\right)=x_{0,1}y_1^*\left(-\frac{M_0}{x_{0,1}}t\right),\\
x_2^*\left(t\right)=-\frac{x_{0,1}^2}{M_0}y_2^*\left(-\frac{M_0}{x_{0,1}}t\right),\\
x_3^*\left(t\right)=\frac{x_{0,1}^3}{M_0^2}y_3^*\left(-\frac{M_0}{x_{0,1}}t\right)+M_3,\label{eq:optimalsolution_equivalent_n4s3_x3}\\
x_4^*\left(t\right)=-\frac{x_{0,1}^4}{M_0^3}\int_{0}^{-\frac{M_0}{x_{0,1}}t}y_3^*\left(\tau\right)\,\mathrm{d}\tau+M_3t+x_{0,4}.\label{eq:optimalsolution_equivalent_n4s3_x4}
\end{IEEEeqnarray}

\end{theorem}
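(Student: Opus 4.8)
The plan is to prove Theorem~\ref{thm:Chattering_n4s3_equivalent} by exploiting the explicit change of variables that already appears in \eqref{eq:optimalsolution_equivalent_n4s3}, read both forwards (to manufacture an optimal trajectory of \eqref{eq:optimalproblem_n4s3} out of $v^*$) and backwards (to compress an arbitrary feasible trajectory of \eqref{eq:optimalproblem_n4s3} into a feasible trajectory of \eqref{eq:optimalproblem_n4s3_equivalent}). The link between the two cost functionals is the elementary identity obtained by integrating $\dot{x}_4=x_3$ over $[0,\tf]$ and using $x_4(\tf)-x_4(0)=x_{\f4}-x_{0,4}$:
\begin{equation*}
\tf=\frac{x_{\f4}-x_{0,4}}{M_3}+\frac{1}{M_3}\int_0^{\tf}\big(M_3-x_3(t)\big)\,\mathrm{d}t .
\end{equation*}
Under the time rescaling $\tau=-\tfrac{M_0}{x_{0,1}}t$ (orientation-preserving because $x_{0,1}<0$) together with the amplitude scalings in \eqref{eq:optimalsolution_equivalent_n4s3_u}--\eqref{eq:optimalsolution_equivalent_n4s3_x3}, a one-line differentiation shows that the chain-of-integrator dynamics of the two problems are intertwined; moreover $\abs{u}\le M_0$ becomes $\abs{v}\le1$, the state constraint $x_3\le M_3$ becomes $y_3\ge0$ (the sign works out since $x_{0,1}^3<0$), the initial data \eqref{eq:optimalproblemn4s3_x0} becomes $\vy(0)=(1,0,0)$, and the integral above transforms into $\tfrac{x_{0,1}^4}{M_0^3}\int_0^{-M_0\tf/x_{0,1}}y_3\,\mathrm{d}\tau$.

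For the \emph{upper bound}, I would take the assumed optimal pair $v^*,\vy^*$ with $\tau_\infty^*<\infty$, define $t_\infty^*,\tf^*,u^*,\vx^*$ by \eqref{eq:optimalsolution_equivalent_n4s3}, and splice on the constrained arc $x_3\equiv M_3$, $x_2\equiv x_1\equiv u\equiv0$ over $[t_\infty^*,\tf^*]$. Then one checks: the dynamics and boundary conditions \eqref{eq:optimalproblemn4s3_x0}--\eqref{eq:optimalproblemn4s3_xf} hold, with $x_4^*(\tf^*)=x_{\f4}$ forcing exactly \eqref{eq:optimalsolution_equivalent_n4s3_tf}; the constraints \eqref{eq:optimalproblemn4s3_x_constraint} hold because $y_3^*\ge0$ and $\abs{v^*}\le1$; $\vx^*$ is continuous at $t_\infty^*$ because $\vy^*(\tau_\infty^*)=\vzero$; and the spliced arc has nonnegative length precisely because hypothesis \eqref{eq:optimalsolution_equivalent_n4s3_condition}, after rearranging, is the inequality $\tf^*\ge t_\infty^*$. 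This exhibits a feasible trajectory of \eqref{eq:optimalproblem_n4s3} with cost $\tf^*$, so the optimal value of \eqref{eq:optimalproblem_n4s3} is at most $\tf^*$.

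For the matching \emph{lower bound}, let $u,\vx$ be any feasible trajectory of \eqref{eq:optimalproblem_n4s3} with terminal time $\tf$. Inverting the scalings on $[0,\tf]$ and extending by $v\equiv0$ for $\tau>-M_0\tf/x_{0,1}$ yields a feasible trajectory $\vy$ of \eqref{eq:optimalproblem_n4s3_equivalent} --- it rests at $\vzero$ from the rescaled terminal time onward, since $\vx(\tf)=\vx_\f$ maps to $\vy=\vzero$ --- so that $\int_0^{\tf}(M_3-x_3)\,\mathrm{d}t=\tfrac{x_{0,1}^4}{M_0^3}\widehat{J}(\vy)\ge\tfrac{x_{0,1}^4}{M_0^3}\widehat{J}^*$ by optimality of $v^*$. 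Substituting into the bridge identity gives $\tf\ge\tf^*$. Together with the upper bound, $\tf^*$ is the optimal value and the trajectory built above is optimal; by the a.e.\ uniqueness of the optimal control (Lemma~\ref{lemma:costate}.\ref{lemma:costate_uniqueoptimalcontrol}) every optimal control equals \eqref{eq:optimalsolution_equivalent_n4s3_u} a.e., and integrating the dynamics recovers the remaining formulas in \eqref{eq:optimalsolution_equivalent_n4s3}.

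The routine step is the differentiation verifying that the scalings intertwine the two dynamics. The delicate points are the sign bookkeeping --- with $x_{0,1}<0$ the odd power $x_{0,1}^3$ flips sign, and this flip must be simultaneously consistent with the positivity of the time rescaling and with the orientation of $y_3\ge0$ --- and the ``effectively finite horizon'' subtlety of \eqref{eq:optimalproblem_n4s3_equivalent}: one must observe that any finite-time steering of $\vy(0)=(1,0,0)$ to $\vzero$ extends by $v\equiv0$ to an admissible infinite-horizon trajectory of the same cost, so that $\widehat{J}^*$ is a genuine lower bound for every $\vy$ produced from a feasible trajectory of \eqref{eq:optimalproblem_n4s3}.
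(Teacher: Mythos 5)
Your proposal is correct and follows essentially the same route as the paper: both directions of the argument are exactly the paper's — the forward scaling of $v^*$ (with the constrained arc $x_3\equiv M_3$ appended, made possible by \eqref{eq:optimalsolution_equivalent_n4s3_condition}) gives feasibility and hence an upper bound on the optimal time, the backward scaling of a feasible trajectory of \eqref{eq:optimalproblem_n4s3} extended by $v\equiv 0$ gives the matching lower bound via $\widehat{J}\ge\widehat{J}^*$ and the identity $M_3\tf=x_{\f4}-x_{0,4}+\int_0^{\tf}\left(M_3-x_3\right)\mathrm{d}t$, and the conclusion is closed with the a.e.\ uniqueness of Lemma \ref{lemma:costate}.\ref{lemma:costate_uniqueoptimalcontrol}. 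No gaps.
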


Problem \eqref{eq:optimalproblem_n4s3_equivalent} can be fully solved by Theorem \ref{thm:Chattering_n4s3_equivalent_optimal_y11} where the optimal control chatters. By Theorem \ref{thm:Chattering_n4s3_equivalent}, the optimal solution of problem \eqref{eq:optimalproblem_n4s3} also chatters.

\begin{theorem}\label{thm:Chattering_n4s3_equivalent_optimal_y11}
$\exists0<\beta_1<\beta_2<1<\beta_3$, $\alpha\in\left(0,1\right)$, $\tau_1>0$, s.t.

\begin{IEEEeqnarray}{l}\label{eq:Chattering_n4s3_equivalent_optimal_y11_equationsystem}
\left(1-2\left(1-\beta_1\right)+2\left(1-\beta_2\right)\right)\tau_1=1-\alpha,\IEEEyesnumber\IEEEyessubnumber*\label{eq:Chattering_n4s3_equivalent_optimal_y11_equationsystem_x1}\\
\left(1-2\left(1-\beta_1\right)^2+2\left(1-\beta_2\right)^2\right)\tau_1=2,\label{eq:Chattering_n4s3_equivalent_optimal_y11_equationsystem_x2}\\
\left(1-2\left(1-\beta_1\right)^3+2\left(1-\beta_2\right)^3\right)\tau_1=3,\label{eq:Chattering_n4s3_equivalent_optimal_y11_equationsystem_x3}\\
2\left(\beta_1+\beta_2+\beta_3\right)+\left(\alpha^2-1\right)\sum_{j<k}\beta_j\beta_k=3,\label{eq:Chattering_n4s3_equivalent_optimal_y11_equationsystem_coeff1}\\
\beta_1+\beta_2+\beta_3-\sum_{j<k}\beta_j\beta_k-\beta_1\beta_2\beta_3\left(\alpha^3-1\right)=1,\label{eq:Chattering_n4s3_equivalent_optimal_y11_equationsystem_coeff0}\\
\tau_\infty=\tau_1/\left(1-\alpha\right).
\end{IEEEeqnarray}
Specifically, Eq. \eqref{eq:Chattering_n4s3_equivalent_optimal_y11_equationsystem} has a unique feasible solution, i.e.,
\begin{equation}\label{eq:Chattering_n4s3_equivalent_optimal_solution}
\begin{aligned}
&\alpha^*\approx0.1660687,\tau_{1}^*\approx4.2479105,\tau_\infty^*\approx5.0938372,\\
&\beta_1^*\approx0.4698574,\beta_2^*\approx0.8716996,\beta_3^*\approx1.0283610.
\end{aligned}\end{equation}
The optimal solution of problem \eqref{eq:optimalproblem_n4s3_equivalent} satisfies $\forall i\in\N$, $\vy\left(\tau_i\right)={\alpha^*}^i\ve_1$ where $\tau_i\triangleq\frac{1-{\alpha^*}^i}{1-\alpha^*}\tau_1^*$ is the junction time. $\forall i\in\N^*$, the optimal control in $\left(\tau_{i-1},\tau_i\right)$ is $\forall \beta\in\left(0,1\right)$,
\begin{equation}\label{eq:Chattering_n4s3_equivalent_optimal_u}
v^*\left(\left(1-\beta\right)\tau_{i-1}+\beta\tau_i\right)=\begin{dcases}
-1,&\beta\in\left(0,\beta_1\right),\\
1,&\beta\in\left(\beta_1,\beta_2\right),\\
-1,&\beta\in\left(\beta_2,1\right).
\end{dcases}\end{equation}
The corresponding costate vector is $\forall\tau\in\left(\tau_{i-1},\tau_i\right)$,
\begin{IEEEeqnarray}{l}\label{eq:Chattering_n4s3_equivalent_optimal_p}
p_1\left(\tau\right)=-\frac{p_0}{6}\prod_{k=1}^{3}\left(\tau-\left(1-\beta_k\right)\tau_{i-1}-\beta_k\tau_i\right),\IEEEyesnumber\IEEEyessubnumber*\label{eq:Chattering_n4s3_equivalent_optimal_p1}\\
p_2\left(\tau\right)=-\dot{p}_1,\,p_3\left(\tau\right)=-\dot{p}_2\left(\tau\right),\,\left(p_i\right)_{i=0}^3\not=\vzero.
\end{IEEEeqnarray}
\end{theorem}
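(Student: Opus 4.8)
The plan is to construct the optimal extremal of the auxiliary problem \eqref{eq:optimalproblem_n4s3_equivalent} from a self-similar ansatz and to identify the five relations in \eqref{eq:Chattering_n4s3_equivalent_optimal_y11_equationsystem} as exactly the conditions that pin it down. First I would record the local structure forced by PMP together with the general results of Section~\ref{sec:NecessaryConditionsChattering}. On every unconstrained arc the costates of \eqref{eq:optimalproblem_n4s3_equivalent} satisfy $\dot p_1=-p_2$, $\dot p_2=-p_3$, $\dot p_3=-p_0$, so $p_1$ is a cubic with leading coefficient $-p_0/6$; it cannot vanish on an interval (that would force $p_0=0$ and hence $\left(p_i\right)_{i=0}^3=\vzero$), so $v=-\sgn\left(p_1\right)$ is bang--bang. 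By Theorem~\ref{thm:ChatteringPhenomenaBehavior} a chattering period carries no constrained arc, so $y_3\ge0$ is only touched tangentially at the junction times $\tau_i$, where therefore $y_3\left(\tau_i\right)=y_2\left(\tau_i\right)=0$; and at such a junction $p_1,p_2$ stay continuous while $p_3$ may jump (Proposition~\ref{prop:junction_condition}, Lemma~\ref{lemma:costate}.\ref{lemma:lambda1continue}). A parity count then fixes the control pattern inside one cycle: the signs of $v$ on the two sides of a tangency are prescribed, a cubic has at most three real roots, and a cycle with no interior switch cannot bring $y_2$ back to $0$ while keeping $y_3\ge0$; hence exactly two control switches occur per cycle, giving the pattern $\left(-1,+1,-1\right)$ with relative switch abscissas $\beta_1<\beta_2$ in $\left(0,1\right)$ and the third root of $p_1$ lying past the cycle, $\beta_3>1$.

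Second, I would exploit the dilation symmetry of \eqref{eq:optimalproblem_n4s3_equivalent}: the map $\tau\mapsto\lambda\tau$, $y_k\mapsto\lambda^k y_k$, $v\mapsto v$ carries feasible trajectories from $\left(1,0,0\right)$ to feasible ones from $\left(\lambda,0,0\right)$ and multiplies the cost by $\lambda^4$, so by uniqueness of the optimal control (the analogue of Lemma~\ref{lemma:costate}.\ref{lemma:costate_uniqueoptimalcontrol}) it sends optima to optima. Letting $\alpha:=y_1\left(\tau_1\right)$ be the state reached at the first junction --- so $\vy\left(\tau_1\right)=\alpha\ve_1$ --- the tail of the optimal trajectory is the $\lambda=\alpha$ rescaling of the whole, which yields $\vy\left(\tau_i\right)=\alpha^i\ve_1$, cycle lengths $\alpha^{i-1}\tau_1$, $\tau_i=\frac{1-\alpha^i}{1-\alpha}\tau_1$, and $\tau_\infty=\tau_1/\left(1-\alpha\right)$. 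Feeding the first cycle (control jumps $-1,+2,-2$ at $0,\beta_1\tau_1,\beta_2\tau_1$) into Proposition~\ref{prop:system_dynamics} and imposing $y_1\left(\tau_1\right)=\alpha$, $y_2\left(\tau_1\right)=0$, $y_3\left(\tau_1\right)=0$ gives exactly \eqref{eq:Chattering_n4s3_equivalent_optimal_y11_equationsystem_x1}--\eqref{eq:Chattering_n4s3_equivalent_optimal_y11_equationsystem_x3}. Writing $p_1$ on cycles $1$ and $2$ in the root form \eqref{eq:Chattering_n4s3_equivalent_optimal_p1} (the cycle-$2$ roots forced by the same self-similarity to be $\left(1-\beta_k\right)\tau_1+\beta_k\tau_2$, all three cubics sharing the leading coefficient $-p_0/6$) and imposing continuity of $p_1$ and of $p_2=-\dot p_1$ at $\tau_1$ gives, after expansion in the elementary symmetric functions of $\beta_1,\beta_2,\beta_3$, exactly the two remaining relations \eqref{eq:Chattering_n4s3_equivalent_optimal_y11_equationsystem_coeff1} and \eqref{eq:Chattering_n4s3_equivalent_optimal_y11_equationsystem_coeff0}. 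Thus every optimal (hence chattering, hence self-similar) extremal has parameters solving the system, and conversely the $v^*,\vy^*,p^*$ displayed form a genuine PMP extremal for any feasible solution of the system, the only non-automatic check being $y_3\ge0$ on each cycle.

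Third, for unique feasibility I would reduce to one scalar equation. With $a=1-\beta_1$, $b=1-\beta_2$ and $g\left(x\right)=6x^2-4x^3$ (strictly increasing on $\left[0,1\right]$, with $g\left(1/2\right)=1$), relations \eqref{eq:Chattering_n4s3_equivalent_optimal_y11_equationsystem_x2}--\eqref{eq:Chattering_n4s3_equivalent_optimal_y11_equationsystem_x3} collapse to $g\left(a\right)-g\left(b\right)=1$, which for $b\in\left(0,1/2\right)$ determines $a=a\left(b\right)\in\left(1/2,1\right)$ uniquely and forces $0<\beta_1<1/2<\beta_2<1$; then \eqref{eq:Chattering_n4s3_equivalent_optimal_y11_equationsystem_x1} yields $\tau_1>0$ and $\alpha=\alpha\left(b\right)$. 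Each of \eqref{eq:Chattering_n4s3_equivalent_optimal_y11_equationsystem_coeff1} and \eqref{eq:Chattering_n4s3_equivalent_optimal_y11_equationsystem_coeff0} is affine in $\beta_3$, so solving both for $\beta_3$ and equating gives a single equation $\Phi\left(b\right)=0$ on $\left(0,1/2\right)$; I would show $\Phi$ has a unique zero there by a sign change at the endpoints plus a monotonicity estimate, check $0<\alpha<1$ and $\beta_3>1$ at that zero, and localize the root to recover \eqref{eq:Chattering_n4s3_equivalent_optimal_solution}. Optimality of the stated solution then follows: \eqref{eq:optimalproblem_n4s3_equivalent} has an optimal solution with $\tau_\infty<\infty$ (bounded controls plus an a priori bound on the time by which $y_3$ must vanish), it satisfies PMP, the structure above puts it in the self-similar chattering class, and that class now has a single member. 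Theorem~\ref{thm:Chattering_n4s3_equivalent} then transports this to the chattering mode of \eqref{eq:optimalproblem_n4s3}.

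The hard part will be the scalar step: $\Phi$ is an awkward rational--algebraic function of $b$ through $a\left(b\right)$ and $\alpha\left(b\right)$, so a clean proof that it has exactly one feasible root, with $\alpha,\tau_1,\beta_3$ all landing in the prescribed ranges, may need a careful factorization or a computer-assisted interval argument rather than a short monotonicity claim. A secondary difficulty is justifying that the optimal control of \eqref{eq:optimalproblem_n4s3_equivalent} must chatter at all --- i.e. excluding finite-switch optima --- which needs a blow-up/rescaling argument near the first contact of $y_3$ with $0$, in the spirit of Fuller's and Robbins' analyses, showing infinitely many switches accumulate there.
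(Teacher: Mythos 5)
Your proposal is correct and follows essentially the same route as the paper's proof: PMP costate analysis of \eqref{eq:optimalproblem_n4s3_equivalent}, the dilation self-similarity (Proposition \ref{prop:n4s3_equivalent_recursive}), exactly two switches per cycle, the same five equations obtained from the first-cycle state dynamics and from continuity of $p_1,p_2$ across junctions, and a numerical determination of the unique feasible root. The only differences are in bookkeeping: the paper does not assume costate self-similarity but derives the cycle-independence of the third root $\beta_3^{\left(i\right)}$ from the $p_3$-jump identity \eqref{eq:Chattering_n4s3_equivalent_optimal_p1i1_p1i}, and the two steps you defer as hard (excluding immediate termination at $\vzero$ and pinning the switch count, with the sign $v_0=-1$ fixed only at the end from $p_1\left(0^+\right)$) are settled in the paper by short algebraic infeasibility checks (\eqref{eq:Chattering_n4s3_equivalent_recursive_y11is0}, \eqref{eq:Chattering_n4s3_equivalent_switch1}) and cubic root counting rather than a blow-up argument, while the final uniqueness of the solution of \eqref{eq:Chattering_n4s3_equivalent_optimal_y11_equationsystem} is, as in your plan, only resolved computationally.
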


\begin{remark}
The unique chattering mode of problem \eqref{eq:optimalproblem_n4s3_equivalent} is given in \eqref{eq:Chattering_n4s3_equivalent_optimal_u}. Applying the homogeneity structure in Proposition \ref{prop:n4s3_equivalent_recursive}, for any feasible $\vy_0$, once $y_3$ is tangent to 0, the optimal control chatters like \eqref{eq:Chattering_n4s3_equivalent_optimal_u}. Therefore, problem \eqref{eq:optimalproblem} with $n=4$ and $\abs{s}=3$ has a unique chattering mode due to Theorem \ref{thm:Chattering_n4s3_equivalent}.
\end{remark}

The proofs of Theorems \ref{thm:Chattering_n4s3_equivalent} and \ref{thm:Chattering_n4s3_equivalent_optimal_y11} are provided in Section \ref{sec:ChatteringPhenomena4thOrder}. As a direct corollary, Theorem \ref{thm:Chattering_n4s3_optimal_solution} gives the optimal solution of problem \eqref{eq:optimalproblem_n4s3} where the chattering mode is in \eqref{eq:Chattering_n4s3_optimal_u}.

\begin{theorem}\label{thm:Chattering_n4s3_optimal_solution}
Apply the values in \eqref{eq:Chattering_n4s3_equivalent_optimal_solution}. Assume that problem \eqref{eq:optimalproblem_n4s3} satisfies the condition \eqref{eq:optimalsolution_equivalent_n4s3_condition}. Then, chattering occurs in the optimal solution of problem \eqref{eq:optimalproblem_n4s3} as follows.
\begin{enumerate}
\item $\forall i\in\N$, $t_i^*=-\frac{x_{0,1}}{M_0}\tau_i^*=-\frac{x_{0,1}\left(1-{\alpha^*}^i\right)}{M_0\left(1-\alpha^*\right)}\tau_1^*$ is the junction time of $\lambda_3$. Then, $x_1^*\left(t_i\right)={\alpha^*}^ix_{0,1}$, $x_2^*\left(t_i\right)=0$, $x_3^*\left(t_i\right)=M_3$, and $x_4^*\left(t_i\right)=x_{\infty,4}-{\alpha^*}^{4i}\left(x_{\infty,4}-x_{0,4}\right)$. The chattering limit time $t_\infty^*$ is given in \eqref{eq:optimalsolution_equivalent_n4s3_tinfty}, and $x_{\infty,4}=x_4^*\left(t_\infty^*\right)=x_{0,4}-\frac{x_{0,1}}{M_0}\left(\frac{x_{0,1}^3}{M_0^2}\widehat{J}^*+M_3\tau_\infty^*\right)$. Specifically, $\vx^*\left(t_\infty^*\right)=\left(0,0,M_3,x_{\infty,4}\right)$.
\item $\forall i\in\N$, the optimal control in $\left(t_{i-1},t_i\right)$ is $\forall \beta\in\left(0,1\right)$,
\begin{equation}\label{eq:Chattering_n4s3_optimal_u}
\hspace{-0.3cm}u^*\left(\left(1-\beta\right)t_{i-1}+\beta t_i\right)=\begin{dcases}
M_0,&\beta\in\left(0,\beta_1^{*}\right),\\
-M_0,&\beta\in\left(\beta_1^{*},\beta_2^{*}\right),\\
M_0,&\beta\in\left(\beta_2^{*},1\right).
\end{dcases}\end{equation}
Furthermore, $\forall t\in\left(t_\infty^*,t_\f^*\right)$, it holds that $x_3^*\left(t\right)\equiv M_3$ and $u^*\left(t\right)\equiv0$, where $t_\f^*$ satisfies \eqref{eq:optimalsolution_equivalent_n4s3_tf}.
\end{enumerate}
\end{theorem}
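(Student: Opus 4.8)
The statement is a corollary of Theorems \ref{thm:Chattering_n4s3_equivalent} and \ref{thm:Chattering_n4s3_equivalent_optimal_y11}, so the plan is to specialize those two results and push everything through the affine change of time and state that they supply. First I would invoke Theorem \ref{thm:Chattering_n4s3_equivalent_optimal_y11}: it exhibits an optimal pair $v=v^*(\tau)$, $\vy=\vy^*(\tau)$ for the parameter-free problem \eqref{eq:optimalproblem_n4s3_equivalent}, together with the finite first-zero time $\tau_\infty^*\approx5.09$. This verifies the hypothesis $\tau_\infty^*<\infty$ of Theorem \ref{thm:Chattering_n4s3_equivalent}, which, under the standing assumption \eqref{eq:optimalsolution_equivalent_n4s3_condition}, yields the optimal solution of \eqref{eq:optimalproblem_n4s3} via the orientation-preserving reparametrization $\tau=-\frac{M_0}{x_{0,1}}t$ (note $x_{0,1}<0$) and the scaling identities \eqref{eq:optimalsolution_equivalent_n4s3_tinfty}--\eqref{eq:optimalsolution_equivalent_n4s3_x4}. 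The limit time $t_\infty^*$ and the terminal time $t_\f^*$ are then read directly from \eqref{eq:optimalsolution_equivalent_n4s3_tinfty}--\eqref{eq:optimalsolution_equivalent_n4s3_tf}.

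Next I would locate the junctions and evaluate the state there. In \eqref{eq:optimalproblem_n4s3} the only inequality constraint that can be active is $x_3\le M_3$, so by Proposition \ref{prop:junction_condition} every costate junction is a junction of $\lambda_3$; by \eqref{eq:optimalsolution_equivalent_n4s3_u} these junction times are precisely the images under $t=-\frac{x_{0,1}}{M_0}\tau$ of the switch/junction times $\tau_i$ of $v^*$ listed in Theorem \ref{thm:Chattering_n4s3_equivalent_optimal_y11}, i.e. $t_i^*=-\frac{x_{0,1}}{M_0}\tau_i^*=-\frac{x_{0,1}(1-{\alpha^*}^i)}{M_0(1-\alpha^*)}\tau_1^*$. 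Substituting $\vy^*(\tau_i)={\alpha^*}^i\ve_1$ into \eqref{eq:optimalsolution_equivalent_n4s3_u}--\eqref{eq:optimalsolution_equivalent_n4s3_x4}: the vanishing of the second and third components of $\vy^*(\tau_i)$ gives $x_2^*(t_i)=0$ and $x_3^*(t_i)=M_3$, while the first component gives $x_1^*(t_i)={\alpha^*}^i x_{0,1}$.

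The only coordinate needing real work is $x_4$. From \eqref{eq:optimalsolution_equivalent_n4s3_x4}, $x_4^*(t_i)=x_{0,4}-\frac{x_{0,1}^4}{M_0^3}\int_0^{\tau_i^*}y_3^*(\tau)\,\mathrm{d}\tau+M_3 t_i^*$, so everything reduces to a closed form for $\int_0^{\tau_i^*}y_3^*$. Here I would use the homogeneity/recursive structure of the chattering mode (Proposition \ref{prop:n4s3_equivalent_recursive}, reflected in $\vy^*(\tau_i)={\alpha^*}^i\ve_1$ and the interval lengths $\tau_i-\tau_{i-1}={\alpha^*}^{i-1}\tau_1$): the restriction of $\vy^*$ to $[\tau_{i-1},\tau_i]$ is the image of its restriction to $[0,\tau_1]$ under the dilation $(y_1,y_2,y_3;\tau)\mapsto({\alpha^*}^{i-1}y_1,{\alpha^*}^{2(i-1)}y_2,{\alpha^*}^{3(i-1)}y_3;\,{\alpha^*}^{i-1}\tau)$, which preserves the dynamics, the constraint $y_3\ge0$, and the bang-bang switch pattern. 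Hence $\int_{\tau_{i-1}}^{\tau_i}y_3^*={\alpha^*}^{4(i-1)}\int_0^{\tau_1}y_3^*$; summing this geometric series over $1\le j\le i$, and letting $i\to\infty$ to identify $\widehat{J}^*=\int_0^\infty y_3^*$, pins down $\int_0^{\tau_i^*}y_3^*$. Putting this back, using the closed form of $t_i^*$, and recognizing $x_{\infty,4}=x_4^*(t_\infty^*)$ as the $i\to\infty$ value, yields the stated expression for $x_4^*(t_i)$ and, by continuity, $\vx^*(t_\infty^*)=(0,0,M_3,x_{\infty,4})$.

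It then remains to record the control and the post-chattering arc. From \eqref{eq:optimalsolution_equivalent_n4s3_u}, $u^*(t)=-M_0 v^*(-\frac{M_0}{x_{0,1}}t)$; because the reparametrization is affine and orientation-preserving, the relative parameter $\beta$ on $(t_{i-1},t_i)$ agrees with that on $(\tau_{i-1},\tau_i)$, and the prefactor $-M_0$ turns the pattern $(-1,1,-1)$ of \eqref{eq:Chattering_n4s3_equivalent_optimal_u} into $(M_0,-M_0,M_0)$, i.e. \eqref{eq:Chattering_n4s3_optimal_u}. Since $\tau_\infty^*<\infty$, problem \eqref{eq:optimalproblem_n4s3_equivalent} forces $\vy^*\equiv\vzero$ on $(\tau_\infty^*,\infty)$, hence $v^*\equiv0$ a.e. there, and \eqref{eq:optimalsolution_equivalent_n4s3_x3}, \eqref{eq:optimalsolution_equivalent_n4s3_u} give $x_3^*\equiv M_3$ and $u^*\equiv0$ on $(t_\infty^*,t_\f^*)$ with $t_\f^*$ as in \eqref{eq:optimalsolution_equivalent_n4s3_tf}. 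The main obstacle is precisely the $x_4$ bookkeeping: identifying the correct dilation so that the $y_3$-integral over each sub-block scales by exactly ${\alpha^*}^{4(i-1)}$, and then keeping the powers of $\alpha^*$ straight while summing the geometric series and reconciling the result with the closed form of $x_{\infty,4}$; everything else is a mechanical substitution into Theorems \ref{thm:Chattering_n4s3_equivalent} and \ref{thm:Chattering_n4s3_equivalent_optimal_y11}.
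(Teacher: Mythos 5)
Your overall route is exactly the paper's: the paper offers no separate argument for this theorem beyond the remark that it is a direct corollary of Theorems \ref{thm:Chattering_n4s3_equivalent} and \ref{thm:Chattering_n4s3_equivalent_optimal_y11}, and your substitutions for $t_i^*$, $x_1^*(t_i)$, $x_2^*(t_i)$, $x_3^*(t_i)$, the control pattern \eqref{eq:Chattering_n4s3_optimal_u}, and the arc on $(t_\infty^*,t_\f^*)$ are all correct mechanical consequences of \eqref{eq:optimalsolution_equivalent_n4s3} and the chattering mode \eqref{eq:Chattering_n4s3_equivalent_optimal_u}.

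The gap is precisely at the step you wave through, the $x_4$ bookkeeping. Your geometric-series computation is right as far as it goes: homogeneity gives $\int_{\tau_{j-1}}^{\tau_j}y_3^*={\alpha^*}^{4(j-1)}\widehat{J}_1$ with $\widehat{J}_1=(1-{\alpha^*}^4)\widehat{J}^*$, hence $\int_0^{\tau_i^*}y_3^*=\widehat{J}^*\bigl(1-{\alpha^*}^{4i}\bigr)$. But plugging this and $t_i^*=t_\infty^*\left(1-{\alpha^*}^{i}\right)$ into \eqref{eq:optimalsolution_equivalent_n4s3_x4} yields
\begin{equation*}
x_4^*\left(t_i^*\right)=x_{0,4}+M_3t_\infty^*\left(1-{\alpha^*}^{i}\right)-\frac{x_{0,1}^4}{M_0^3}\widehat{J}^*\left(1-{\alpha^*}^{4i}\right),
\end{equation*}
whereas the stated closed form $x_{\infty,4}-{\alpha^*}^{4i}\left(x_{\infty,4}-x_{0,4}\right)$ equals $x_{0,4}+\left(1-{\alpha^*}^{4i}\right)\left(M_3t_\infty^*-\frac{x_{0,1}^4}{M_0^3}\widehat{J}^*\right)$. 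The two differ by $M_3t_\infty^*\left({\alpha^*}^{4i}-{\alpha^*}^{i}\right)\neq0$ for every $i\geq1$ (e.g.\ with $M_0=M_3=1$, $x_{0,1}=-1$, $x_{0,4}=0$, at $i=1$ one gets $\approx2.90$ versus $\approx3.75$). The reason is that only the drift-corrected quantity $x_4-x_{0,4}-M_3t$, i.e.\ $-\frac{x_{0,1}^4}{M_0^3}\int_0^{\tau}y_3^*$, is self-similar with ratio ${\alpha^*}^{4}$; the drift term $M_3t_i^*$ contracts like ${\alpha^*}^{i}$, so $x_{\infty,4}-x_4^*(t_i^*)\sim M_3t_\infty^*{\alpha^*}^{i}$, not ${\alpha^*}^{4i}$. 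So your claim that the series, "reconciled with the closed form of $x_{\infty,4}$, yields the stated expression" cannot be carried out: done carefully, your own method disproves the printed formula for $x_4^*(t_i)$ (the remaining parts of the theorem, including $x_{\infty,4}$ and $\vx^*(t_\infty^*)$, are unaffected). A correct writeup must either state the corrected expression above or restrict the ${\alpha^*}^{4i}$ relation to the drift-corrected coordinate.
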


\section{Chattering Conditions in Problem \eqref{eq:optimalproblem}}\label{sec:NecessaryConditionsChattering}

This section aims to prove Theorem \ref{thm:ChatteringPhenomenaBehavior}, providing necessary conditions for chattering in problem \eqref{eq:optimalproblem}. The existence of chattering is always assumed in this section. Firstly, Section \ref{subsec:UniquenessOfStateConstraintChattering} proves the uniqueness of chattering constraints. Then, Section \ref{subsec:NecessaryConditionsStateConstraintChattering} provides some necessary conditions of chattering constraints. Finally, Section \ref{subsec:BehaviorsOfCostatesDuringChatteringPeriod} analyzes the limiting behavior of $\vx$ and $\vlambda$ in the chattering period.

\subsection{Uniqueness of Chattering Constraints}\label{subsec:UniquenessOfStateConstraintChattering}

In chattering, the control $u$ jumps for infinitely many times in a finite time period. It is evident that state constraints should switch between active and inactive for infinitely many times; otherwise, Lemma \ref{lemma:costate}.\ref{lemma:costate_lambdak_polynomial} implies that $\lambda_1$ has a finite number of roots, which contradicts PMP and chattering.

Denote the limit time as $t_\infty$. Assume that chattering occurs in the left-side neighborhood of $t_\infty$, i.e., $u$ switches for infinitely many times in $\left(t_0,t_\infty\right)$. Assume that junctions at $\left\{s_r\right\}_{r=1}^R\subset\mathcal{N}$ occur for infinitely many times in $\left(t_0,t_\infty\right)$. According to BPO, one can investigate the duration after all state constraints except $\left\{s_r\right\}_{r=1}^R$ allowed active. Hence, state constraints expect $\left\{s_r\right\}_{r=1}^R$ are not taken into consideration. In this section, ``junction time'' refers to the time when a state constraint switches between active and inactive. $\vlambda$ is allowed but unnecessary to jump at junction time.

$\forall 1\leq r\leq R$, denote the junction time set of $s_r$ as $\left\{t_i^{\left(r\right)}\right\}_{i=1}^\infty$ which increases monotonically and converges to $t_\infty$. Then, $x_{\abs{s_r}}\left(t_i^{\left(r\right)}\right)=\sgn\left(s_r\right)M_{\abs{s_r}}$, while $\sgn\left(s_r\right)x_{\abs{s_r}}<M_{\abs{s_r}}$ holds in a one-sided neighborhood of $t_i^{\left(r\right)}$.

The uniqueness of chattering constraints is given as the following proposition, i.e., Fig. \ref{fig:chattering_case}(a) is impossible.

\begin{proposition}[Uniqueness of Chattering Constraints]\label{prop:UniquenessOfStateConstraintChattering}
If the chattering phenomenon occurs in problem \eqref{eq:optimalproblem} at $\left[t_0,t_\infty\right]$, then $\exists\delta>0$, s.t. there exists a unique state constraint switching between active and inactive during $\left(t_\infty-\delta,t_\infty\right)$, i.e., $R =1$.
\end{proposition}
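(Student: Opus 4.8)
The plan is to argue by contradiction: suppose $R \geq 2$, so that at least two distinct constraints $s_r, s_{r'} \in \mathcal{N}$ each switch between active and inactive infinitely often in every left-neighborhood of $t_\infty$. First I would extract the limiting structure of the state vector at $t_\infty$. For each $r$, the junction times $\{t_i^{(r)}\}$ accumulate at $t_\infty$, and at each such time $\sgn(s_r)x_{|s_r|} = M_{|s_r|}$; by continuity $\sgn(s_r)x_{|s_r|}(t_\infty) = M_{|s_r|}$ as well. Moreover, by the tangency analysis of Lemma \ref{lemma:costate}.\ref{lemma:costate_junction_secondbracket}, whenever $|x_k|$ is tangent to $M_k$ at an interior time, either the lower-index states $x_{k-1},\dots,x_1$ all vanish there, or a prefix of them vanishes with a sign condition on the first nonzero one. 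Applying this at the accumulating junction times of $s_r$ and taking $i \to \infty$, I expect to force $x_j(t_\infty) = 0$ for all $j < |s_r|$ (or at least a large block of them), so the state is pinned to the corner $M_{|s_r|}\sgn(s_r)\ve_{|s_r|}$ of the $s_r$-boundary. Doing this simultaneously for two indices $|s_r| \neq |s_{r'}|$ (WLOG $|s_r| < |s_{r'}|$) would pin $x(t_\infty)$ to a point with $x_{|s_r|}(t_\infty) = \pm M_{|s_r|}$ and yet $x_{|s_r|}(t_\infty) = 0$ coming from the vanishing-prefix condition of the higher constraint — an immediate contradiction, unless the two constraints share the same index. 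The remaining subcase $|s_r| = |s_{r'}|$ with opposite signs is handled because $x_{|s_r|}$ cannot equal both $+M_{|s_r|}$ and $-M_{|s_r|}$ arbitrarily close to $t_\infty$ while remaining continuous (and $M_{|s_r|} > 0$), ruling it out directly.

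The key steps, in order, are: (1) fix an arbitrary left-neighborhood and reduce, via BPO as already set up in the text, to the finite set $\{s_r\}_{r=1}^R$ of constraints that are active infinitely often; (2) for a single $s_r$, combine the junction condition (Proposition \ref{prop:junction_condition}) with the tangency dichotomy (Lemma \ref{lemma:costate}.\ref{lemma:costate_junction_secondbracket}) at the accumulating junction times, and pass to the limit to deduce the values $x_j(t_\infty)$ for $j \leq |s_r|$; (3) play two such constraints against each other to obtain a contradiction from incompatible values of $x(t_\infty)$; (4) conclude $R = 1$ and rename the junction time set, noting the uniform $\delta$ is obtained by choosing it past the last time any other (finitely many) constraint is active. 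I would also need to confirm that a constraint active infinitely often really does produce interior tangencies of the required form — i.e., that it is genuinely switching rather than sitting on a constrained arc — but this is exactly the hypothesis "switching between active and inactive," and Lemma \ref{lemma:costate}.\ref{lemma:lambda1continue} together with the bang-bang structure guarantees the unconstrained arcs in between are nondegenerate.

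The main obstacle I anticipate is step (2): carefully controlling the limit of the lower-order states. At each junction time $t_i^{(r)}$ the Lemma gives a dichotomy, and a priori different $i$'s could fall into different branches, so I would first pass to a subsequence on which the branch (and the value $l$, the degree) is constant, then argue that along that subsequence the relevant $x_j$ vanish identically at the junction times and hence, by continuity and the accumulation, $x_j(t_\infty) = 0$. A subtlety is that the vanishing is at the discrete junction times, not on a neighborhood, so I must use that these times accumulate at $t_\infty$ plus continuity of $x_j$ — which is fine since $\vx$ is absolutely continuous — to transfer the conclusion to the limit point. Once $x(t_\infty)$ is pinned for each of the two constraints, the contradiction in step (3) is essentially algebraic and short. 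A secondary technical point is ensuring the argument is symmetric for a right-neighborhood of $t_\infty$, but that is verbatim the same with time reversed, as the paper already remarks for Theorem \ref{thm:ChatteringPhenomenaBehavior}.
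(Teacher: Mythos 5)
Your overall architecture (contradiction with $R\geq2$, splitting into the cases $\abs{s_1}\not=\abs{s_2}$ and $s_1=-s_2$, and then exploiting continuity of the states at the accumulation point $t_\infty$) matches the paper's proof in spirit, and your treatment of the equal-index, opposite-sign case is correct as stated. The genuine gap is in your step (2), the ``pinning'' step. The tangency dichotomy of Lemma \ref{lemma:costate}.\ref{lemma:costate_junction_secondbracket} only forces the vanishing of the suffix block $x_{k-1},\dots,x_{k-2l+1}$ at a tangency of $\abs{x_k}$ with $M_k$; if the degree along your constant-branch subsequence is $2l$ with $l=1$, you only learn $x_{k-1}(t_i)=0$, and passing to the limit gives only $x_{k-1}(t_\infty)=0$. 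So for $\abs{s_r}<\abs{s_{r'}}$ with, say, $\abs{s_r}<\abs{s_{r'}}-2l+1$ (e.g.\ $\abs{s_r}=1$, $\abs{s_{r'}}=4$, $l=1$), your argument produces no contradiction: you never obtain $x_{\abs{s_r}}(t_\infty)=0$. A secondary problem is that at this stage of the argument a junction time of $s_{r'}$ need not be a tangency in the sense of Lemma \ref{lemma:costate}.\ref{lemma:costate_junction_secondbracket} at all --- it may be the entry or exit of a constrained arc $\{\abs{x_{\abs{s_{r'}}}}\equiv M_{\abs{s_{r'}}}\}$ (that constrained arcs cannot occur in a chattering period is only proved later, in Proposition \ref{prop:chattering_nosystembehavior}), so the dichotomy cannot be invoked at every junction time.

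The repair is short and is exactly what the paper does: since $x_{\abs{s_{r'}}}$ takes the \emph{same} value $\sgn(s_{r'})M_{\abs{s_{r'}}}$ at all junction times $t_i^{(r')}$, Rolle's theorem applied between consecutive junction times, and then recursively to the successive derivatives, produces for every $j<\abs{s_{r'}}$ a sequence of zeros of $x_j$ accumulating at $t_\infty$; continuity (equivalently, the paper's Lipschitz bound $\norm[\infty]{\dot x_j}<\infty$) then gives $x_j(t_\infty)=0$, in particular $x_{\abs{s_r}}(t_\infty)=0$, contradicting $\abs{x_{\abs{s_r}}(t_\infty)}=M_{\abs{s_r}}>0$ obtained from the accumulating junction times of $s_r$. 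With that substitution for your step (2), your proof goes through and is essentially the paper's argument; as written, however, the mechanism you chose for the key step does not suffice.
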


\begin{proof}

Assume that $R\geq2$. $s_1\not=s_2$ implies that (a) $\abs{s_1}\not=\abs{s_2}$ or (b) $s_1=-s_2$. Before the proof, it should be pointed out that $\forall1\leq k\leq n$, $\norm[\infty]{\dot x_k}\triangleq\sup_{t\in\left[0,t_\f\right]}\abs{\dot x_k\left(t\right)}<\infty$.

For Case (a), assume that $\abs{s_1}>\abs{s_2}\geq1$. Note that $\forall i\in\N^*$, $x_{\abs{s_1}}\left(t_i^{\left(1\right)}\right)=\sgn\left(s_1\right)M_{\abs{s_1}}$. Applying Rolle's theorem \cite{stein2009real} recursively, one has $\forall i\in\N^*$, $\exists\hat{t}_i\in\left(t_i^{\left(1\right)},t_{i+\abs{s_1}-\abs{s_2}}^{\left(1\right)}\right)$, s.t. $x_{\abs{s_2}}\left(\hat{t}_i\right)=0$ since $x_{\abs{s_2}}=\frac{\mathrm{d}^{\abs{s_1}-\abs{s_2}}x_{\abs{s_1}}}{\mathrm{d}t^{\abs{s_1}-\abs{s_2}}}$. As $i\to\infty$, it holds that $t_i^{\left(2\right)},\hat{t}_i\to t_\infty$, i.e., $\abs{t_i^{\left(2\right)}-\hat{t}_i}\to0$; hence,
\begin{equation}
\begin{aligned}
0<M_{\abs{s_2}}=&\abs{x_{\abs{s_2}}\left(t_i^{\left(2\right)}\right)-x_{\abs{s_2}}\left(\hat{t}_i\right)}\\
\leq&\norm[\infty]{\dot x_{\abs{s_2}}}\abs{t_i^{\left(2\right)}-\hat{t}_i}\to 0,i\to\infty,
\end{aligned}\end{equation}
which leads to a contradiction since $\norm[\infty]{\dot x_{\abs{s_2}}}<\infty$. Therefore, Case (a) is impossible.

For Case (b), assume that $\sgn\left(s_1\right)=+1$ and $\sgn\left(s_2\right)=-1$. Then, $\forall i\in\N^*$, it holds that $x_{\abs{s_1}}\left(t_i^{\left(1\right)}\right)=M_{\abs{s_1}}$ and $x_{\abs{s_1}}\left(t_i^{\left( 2\right)}\right)=-M_{\abs{s_1}}$. As $i\to\infty$, $t_i^{\left(1\right)},t_i^{\left(2\right)}\to t_\infty$ implies that $\abs{t_i^{\left(1\right)}-t_i^{\left(2\right)}}\to0$. Hence, $\norm[\infty]{\dot x_{\abs{s_1}}}<\infty$ implies that
\begin{equation}
\begin{aligned}
0<2M_{\abs{s_1}}=&\abs{x_{\abs{s_1}}\left(t_i^{\left(1\right)}\right)-x_{\abs{s_1}}\left(t_i^{\left(2\right)}\right)}\\
\leq&\norm[\infty]{\dot x_{\abs{s_1}}}\abs{t_i^{\left(1\right)}-t_i^{\left(2\right)}}\to 0,\,i\to\infty,
\end{aligned}\end{equation}
which leads to a contradiction. So Case (b) is impossible.
\end{proof}

\begin{remark}
Proposition \ref{prop:UniquenessOfStateConstraintChattering} implies that if $t_0$ is close enough to $t_\infty$, then there exists a unique state constraint allowed active. Consider the chattering constraint $s_1$. Assume that $\forall2\leq r\leq R$, $s_r$ switches at $\left\{t_i^{\left(r\right)}\right\}_{i=1}^{N_r}\subset\left(t_0,t_\infty\right)$. Consider $\hat{t}_0  \in \left(\max_{2\leq r\leq R,1\leq i\leq N_r}t_i^{\left(r\right)},t_\infty\right)$. By BPO, the trajectory between $\hat{t}_0$ and $t_\infty$ is optimal, and only $s_1$ is active during $\left(\hat{t}_0,t_\infty\right)$. Therefore, one can investigate a chattering constraint and get rid of all other constraints in a chattering period.
\end{remark}

In the following, denote $s$ as the chattering constraint in the chattering period $\left[t_0,t_\infty\right]$. Let $\sgn\left(s\right)=+1$. The junction time set $\left\{t_i\right\}_{i=0}^\infty$ increases monotonically and converges to $t_\infty$.

\subsection{State Constraints Able to Induce Chattering}\label{subsec:NecessaryConditionsStateConstraintChattering}

Note that $\forall k>\abs{s}$, $t\in\left(t_0,t_\infty\right)$, $\abs{x_k}<M_k$. By Lemma \ref{lemma:costate}.\ref{lemma:costate_lambdak_polynomial}, $\forall k>\abs{s}$, $\lambda_k\left(t\right)$ is zero or a polynomial of degree at most $\left(n-k\right)$. Without loss of generality, assume that $\forall t\in\left(t_0,t_\infty\right)$, $k>\abs{s}$, $\sgn\left(\lambda_k\left(t\right)\right)\equiv\mathrm{const}$; otherwise, by BPO, one can consider the chattering period $\left(\hat{t}_0,t_\infty\right)$, where $\hat{t}_0$ is defined as $\max\left\{t\in\left(t_0,t_\infty\right):\exists k>\abs{s},\text{ s.t. }\lambda_k\text{ crosses }0\text{ at }t\right\}$.

In this section, Proposition \ref{prop:ChatteringNecessary_nonmonotonical} provides the monotonicity of $\vlambda$ in the chattering period. Proposition \ref{prop:ChatteringPhenomenaNecessaryConditions_1sn} gives some necessary conditions of the chattering constraint. Proposition \ref{prop:chattering_nosystembehavior} proves that constrained arcs do not exist in the chattering period.

\begin{proposition}\label{prop:ChatteringNecessary_nonmonotonical}
Consider the chattering constraint $s$ in problem \eqref{eq:optimalproblem}. $\forall 1\leq k\leq\abs{s}$, $\delta>0$, $\lambda_k$ is non-monotonic in $\left(t_\infty-\delta,t_\infty\right)$. Furthermore, $\forall 1\leq k\leq\abs{s}$, $\exists t_\delta,t_\delta'\in\left(t_\infty-\delta,t_\infty\right)$, s.t. $\lambda_k\left(t_\delta\right)>0$, $\lambda_k\left(t_\delta'\right)<0$.
\end{proposition}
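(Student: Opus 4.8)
The plan is to argue by contradiction: suppose that for some $1 \leq k \leq \abs{s}$ and some $\delta > 0$, the costate $\lambda_k$ is monotonic on $(t_\infty - \delta, t_\infty)$. By Lemma~\ref{lemma:costate}.\ref{lemma:lambda1continue} and the junction condition (Proposition~\ref{prop:junction_condition}), $\lambda_1$ is continuous, and $\lambda_k$ for $k \geq 2$ can only jump at junction times $\{t_i\}$ with the sign constraint $\sgn(s)[\lambda_{\abs{s}}(t_i^+) - \lambda_{\abs{s}}(t_i^-)] \leq 0$ (and $\lambda_j$ continuous for $j \neq \abs{s}$). I would first reduce to the case $k = 1$: if $\lambda_1$ were monotonic near $t_\infty$, then since $\lambda_1$ is continuous, it would have at most one root in $(t_\infty - \delta, t_\infty)$, so by the bang-bang law \eqref{eq:bang_singular_bang_law} the control $u$ would switch at most once in $(t_\infty - \delta, t_\infty)$, contradicting chattering. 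Hence $\lambda_1$ is non-monotonic, and in fact (since it is continuous and must cross $0$ infinitely often to produce infinitely many switches — as the chattering must be genuine, $\lambda_1 \not\equiv 0$ on any subinterval by Lemma~\ref{lemma:costate}.\ref{lemma:costate_lambdak_polynomial}) it attains both strictly positive and strictly negative values in every left-neighborhood of $t_\infty$. This establishes the claim for $k = 1$.

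Next I would propagate this upward to $k \leq \abs{s}$ using the costate ODEs \eqref{eq:derivative_costate}. On each unconstrained open subinterval of the chattering period, $\eta_k \equiv 0$ for all $k$ (no constraint is active except at the isolated junction times $\{t_i\}$ by Theorem~\ref{thm:ChatteringPhenomenaBehavior}.\ref{thm:ChatteringPhenomenaBehavior_sh}, which we are entitled to use), so $\dot\lambda_{k-1} = -\lambda_k$ there; thus $\lambda_k = -\dot\lambda_{k-1}$ on these arcs. If $\lambda_k$ were monotonic — say nondecreasing — on $(t_\infty - \delta, t_\infty)$, then on each unconstrained subarc $\dot\lambda_{k-1} = -\lambda_k$ is monotone, so $\lambda_{k-1}$ is convex (or concave) there; combined with the fact that $\lambda_{k-1}$ is continuous (for $k - 1 < \abs{s}$, it does not jump — only $\lambda_{\abs{s}}$ can jump at junctions), a convex/concave function that is continuous across all the junction points of a sequence accumulating at $t_\infty$ cannot oscillate in sign infinitely often, contradicting the already-established oscillation of $\lambda_{k-1}$. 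More carefully: a continuous function whose derivative is monotone on a neighborhood of $t_\infty$ (allowing the derivative to jump only at the $t_i$, but with $\lambda_{k-1}$ itself continuous) has at most two sign changes near $t_\infty$, so it cannot take both signs in every sub-neighborhood if it is to be compatible with finitely-many-per-arc behavior — here I would induct downward from $k = \abs{s}$ to $k = 1$ in reverse, or equivalently upward, carefully tracking that the jump at junctions only affects $\lambda_{\abs{s}}$ and has a fixed sign.

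The delicate point — and the main obstacle — is handling the single component $\lambda_{\abs{s}}$, which is the only one allowed to jump (downward, since $\sgn(s) = +1$, by the junction condition). Monotonicity of $\lambda_{\abs{s}}$ is not immediately contradicted by oscillation of lower costates, because the downward jumps could in principle conspire with continuous increase between junctions. Here I would use Theorem~\ref{thm:ChatteringPhenomenaBehavior}.\ref{thm:ChatteringPhenomenaBehavior_limit}(a) in the form already available — or re-derive directly — that $\sgn(s)\lambda_{\abs{s}}$ increases on each $(t_i, t_{i+1})$ and jumps decreasingly at $t_i$; combined with $\lambda_{\abs{s}}(t_\infty) = 0$ (from the convergence, Theorem~\ref{thm:ChatteringPhenomenaBehavior}.\ref{thm:ChatteringPhenomenaBehavior_limit}(c)) and the requirement that $\lambda_{\abs{s}}$ not be identically zero, one sees $\lambda_{\abs{s}}$ must exceed $0$ just before each junction and be exceeded by its pre-jump value, forcing genuine sign changes: if $\lambda_{\abs{s}}$ were $\geq 0$ throughout with $\lambda_{\abs{s}}(t_\infty) = 0$ and increasing between junctions, then $\lambda_{\abs{s}}(t_i^-) > \lambda_{\abs{s}}(t_{i}^+) \geq 0$ would be compatible only if the decrease is recouped, but iterating toward $t_\infty$ with limit $0$ forces $\lambda_{\abs{s}}$ strictly positive on $(t_i, t_{i+1})$ for infinitely many $i$ — I then need a parallel argument giving strictly negative values, which comes from requiring $\lambda_{\abs{s}-1}$ (continuous) to oscillate in sign, hence $\dot\lambda_{\abs{s}-1} = -\lambda_{\abs{s}}$ must change sign on each arc, forcing $\lambda_{\abs{s}}$ to be both positive and negative within each arc. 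Once $\lambda_{\abs{s}}$ is pinned down as non-monotonic and sign-changing, the downward induction for $k < \abs{s}$ is routine via \eqref{eq:derivative_costate} and the continuity of $\lambda_k$ there.
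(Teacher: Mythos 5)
Your overall strategy is sound and genuinely different in organization from the paper's. The paper argues \emph{downward} in one step: if some $\lambda_k$ ($1\leq k\leq\abs{s}$) were monotonic near $t_\infty$, then integrating the chain $\dot\lambda_{j}=-\lambda_{j+1}$ (valid for $j<\abs{s}$ throughout the chattering period, because by Proposition \ref{prop:UniquenessOfStateConstraintChattering} only the constraint $s$ can be active, so $\eta_j\equiv0$ for $j\neq\abs{s}$) shows $\lambda_1$ has at most $k$ roots, hence at most finitely many switches, contradicting chattering; the sign claim then follows because one-signed $\lambda_k$ with $k\geq2$ forces $\lambda_{k-1}$ monotone, and one-signed $\lambda_1$ forces a one-signed control. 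You instead induct \emph{upward} from $k=1$, using convexity of $\lambda_{k-1}$ when $\lambda_k$ is monotone and a mean-value argument to propagate sign oscillation. That route can be made to work, but note two things: (i) you must carry ``takes both signs in every left-neighborhood'' as the inductive hypothesis at \emph{every} level (your mean-value observation at the very end is the right tool and should be applied for each $k$, not only for $\abs{s}$), and (ii) the paper's downward root-count is shorter and avoids the induction altogether.

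The concrete problems are in your ``delicate point'' paragraph. First, it is circular within the paper's development: Theorem \ref{thm:ChatteringPhenomenaBehavior}.\ref{thm:ChatteringPhenomenaBehavior_sh} and \ref{thm:ChatteringPhenomenaBehavior}.\ref{thm:ChatteringPhenomenaBehavior_limit} (monotone increase of $\sgn\left(s\right)\lambda_{\abs{s}}$ between junctions, which needs the sign of $\lambda_{\abs{s}+1}$ from Proposition \ref{prop:ChatteringPhenomenaNecessaryConditions_1sn}, and $\lambda_{\abs{s}}\left(t_\infty\right)=0$ from Proposition \ref{prop:terminal_behaviors_states_costates_chattering}) are all proved \emph{after} and \emph{using} this proposition, so you are not entitled to them here; the only prior fact you need (and all you need) is the uniqueness of the chattering constraint, which gives the clean ODEs $\dot\lambda_j=-\lambda_{j+1}$ for $j<\abs{s}$. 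Second, the obstacle you identify is not actually an obstacle: if the function $\lambda_{\abs{s}}$ is monotone on $\left(t_\infty-\delta,t_\infty\right)$ \emph{as a function} (jumps included), then $\lambda_{\abs{s}-1}\left(t\right)=\lambda_{\abs{s}-1}\left(a\right)-\int_a^t\lambda_{\abs{s}}$ is convex or concave regardless of the jumps, so it has at most two sign changes and your convexity contradiction applies verbatim at $k=\abs{s}$; and the ``both signs'' statement for $\lambda_{\abs{s}}$ follows from the same mean-value argument applied to the continuous, oscillating $\lambda_{\abs{s}-1}$. Dropping that paragraph and replacing it by these two observations removes the circularity and completes your induction. (A minor shared imprecision with the paper: a monotone $\lambda_1$ could vanish on a subinterval, giving a singular arc and up to two switches rather than one root/one switch; either way the switch count stays finite, which is all the contradiction requires.)
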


\begin{proof}
Assume that $\exists\delta>0$, $1\leq k\leq\abs{s}$, s.t. $\lambda_k$ is monotonic in $\left(t_\infty-\delta,t_\infty\right)$. Then, $\lambda_1$ has at most $k$ roots. By Lemma \ref{lemma:costate}.\ref{lemma:costate_bangbang}, $u$ switches for at most $k$ times during $\left(t_\infty-\delta,t_\infty\right)$, which contradicts chattering. So $\lambda_k$ is non-monotonic.

Assume that $\exists\delta>0$, $1\leq k\leq\abs{s}$, $\forall t\in\left(t_\infty-\delta,t_\infty\right)$, $\lambda_{k}\left(t\right)\leq0$. Then, $k=1$; otherwise, $\lambda_{k-1}$ is monotonic, which leads to a contradiction. However, $k=1$ implies that $u\leq0$ on $\left(t_\infty-\delta,t_\infty\right)$, which contradicts chattering. Therefore, $\forall 1\leq k\leq\abs{s}$, $\delta>0$, $\exists t_\delta\in\left(t_\infty-\delta,t_\infty\right)$, s.t. $\lambda_k\left(t_\delta\right)>0$; similarly, $\exists t_\delta'\in\left(t_\infty-\delta,t_\infty\right)$, s.t. $\lambda_k\left(t_\delta'\right)<0$.
\end{proof}

\begin{remark}
Proposition \ref{prop:ChatteringNecessary_nonmonotonical} implies that during the chattering period, $\forall\abs{s}<k\leq n$, $\sgn\left(\lambda_k\right)\equiv\mathrm{const}$, while $\forall1\leq k\leq\abs{s}$, $\sgn\left(\lambda_k\right)$ switches between $\pm1$ for infinitely many times.
\end{remark}

\begin{proposition}\label{prop:ChatteringPhenomenaNecessaryConditions_1sn}
In problem \eqref{eq:optimalproblem}, the chattering constraint $s$ satisfies $1<\abs{s}<n$ and $\sgn\left(s\right)\lambda_{\abs{s}+1}<0$ during $\left(t_0,t_\infty\right)$.

\end{proposition}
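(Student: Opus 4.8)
The plan is to derive all three conclusions from a single mechanism: in each of the forbidden configurations the costate $\lambda_{\abs{s}}$ (or $\lambda_1$ when $\abs{s}=1$) is forced to be \emph{monotone} on $\left(t_0,t_\infty\right)$, which contradicts Proposition \ref{prop:ChatteringNecessary_nonmonotonical}. Recall $\sgn\left(s\right)=+1$, so $x_{\abs{s}}\left(t_i\right)=M_{\abs{s}}>0$ at every junction time and, by Proposition \ref{prop:junction_condition}, the jump of $\lambda_{\abs{s}}$ at each $t_i$ equals $\mu_i\sgn\left(x_{\abs{s}}\left(t_i\right)\right)=\mu_i\le0$ (non-positive). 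For $\abs{s}<n$: suppose $\abs{s}=n$. By \eqref{eq:derivative_costate_lambdan_zero}, $\abs{x_n}<M_n$ a.e., so $\overline n$ is never active on an arc of positive length, every junction is a tangency, and $\dot\lambda_n=-\eta_n\sgn\left(x_n\right)=0$ a.e.; hence $\lambda_n$ is constant between consecutive $t_i$ and jumps by $\mu_i\le0$ at each $t_i$, so $\lambda_n=\lambda_{\abs{s}}$ is non-increasing, contradicting Proposition \ref{prop:ChatteringNecessary_nonmonotonical}. Thus $\abs{s}<n$, and $\lambda_{\abs{s}+1}$ is defined.

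For $\abs{s}>1$: suppose $\abs{s}=1$. A junction of $\overline 1$ perturbs only the $\lambda_1$-component (Proposition \ref{prop:junction_condition}), but $\lambda_1$ is continuous by Lemma \ref{lemma:costate}.\ref{lemma:lambda1continue}, so $\mu_i=0$ and $\vlambda$ is continuous on $\left(t_0,t_\infty\right)$. Since $\overline 1$ is the only active constraint and $x_1=M_1$ only on the countable set $\left\{t_i\right\}$, the costate equations \eqref{eq:derivative_costate} give $\dot\lambda_k=-\lambda_{k+1}$ a.e.\ for $2\le k<n$ and $\dot\lambda_n=0$ a.e., so $\lambda_2,\dots,\lambda_n$ are polynomials on $\left(t_0,t_\infty\right)$; moreover $\sgn\left(\lambda_2\right)$ is constant by the standing assumption on $\sgn\left(\lambda_k\right)$, $k>\abs{s}$. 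If $\lambda_2\ge0$, then $\dot\lambda_1=-\lambda_2-\eta_1\sgn\left(x_1\right)\le0$ on every unconstrained arc while $\lambda_1\equiv0$ on every constrained arc $\left\{x_1\equiv M_1\right\}$ (Lemma \ref{lemma:costate}.\ref{lemma:costate_lambdak_polynomial}), so $\lambda_1$ is non-increasing. If $\lambda_2<0$, then $\lambda_1$ is strictly increasing on every unconstrained arc and vanishes on every constrained arc of $x_1$; since a strictly increasing function cannot return from $0$ to $0$, there is at most one constrained arc of $x_1$, and after restricting past it $\lambda_1$ is strictly monotone on $\left(t_0,t_\infty\right)$. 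In either case $\lambda_1$ has at most one sign change, so $u=-M_0\sgn\left(\lambda_1\right)$ (Lemma \ref{lemma:costate}.\ref{lemma:costate_bangbang}) switches only finitely often, contradicting chattering; hence $\abs{s}>1$.

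For the sign, the standing assumption makes $\sgn\left(\lambda_{\abs{s}+1}\right)$ constant on $\left(t_0,t_\infty\right)$; suppose $\lambda_{\abs{s}+1}\ge0$ there. After discarding the finitely many passages through $\underline{\abs{s}}$ (Proposition \ref{prop:UniquenessOfStateConstraintChattering} and BPO), we have $-M_{\abs{s}}<x_{\abs{s}}<M_{\abs{s}}$ on every unconstrained arc, so $\eta_{\abs{s}}=0$ and $\dot\lambda_{\abs{s}}=-\lambda_{\abs{s}+1}\le0$ there; on every constrained arc $\left\{x_{\abs{s}}\equiv M_{\abs{s}}\right\}$, $\lambda_{\abs{s}}\equiv0$ by Lemma \ref{lemma:costate}.\ref{lemma:costate_lambdak_polynomial}; and at every $t_i$ the jump of $\lambda_{\abs{s}}$ is $\mu_i\le0$. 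These three facts together force $\lambda_{\abs{s}}$ to be non-increasing on $\left(t_0,t_\infty\right)$, again contradicting Proposition \ref{prop:ChatteringNecessary_nonmonotonical}; therefore $\lambda_{\abs{s}+1}<0$, i.e.\ $\sgn\left(s\right)\lambda_{\abs{s}+1}<0$. The only slightly delicate point is the subcase $\abs{s}=1$ with $\lambda_2<0$: because Proposition \ref{prop:chattering_nosystembehavior} on the absence of constrained arcs is not yet available at this stage, one must first rule out two or more constrained arcs of $x_1$ before reading off monotonicity of $\lambda_1$; every other case collapses immediately to the monotone/non-monotone dichotomy for $\lambda_{\abs{s}}$.
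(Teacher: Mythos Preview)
Your proof is correct. For $\abs{s}<n$ and for the sign of $\lambda_{\abs{s}+1}$ you follow essentially the paper's route: in each case the failure of the conclusion would make $\lambda_{\abs{s}}$ non-increasing on $\left(t_0,t_\infty\right)$ (non-positive jumps at junctions, $\dot\lambda_{\abs{s}}=-\lambda_{\abs{s}+1}\le 0$ on unconstrained arcs, and $\lambda_{\abs{s}}\equiv 0$ on constrained arcs), contradicting Proposition~\ref{prop:ChatteringNecessary_nonmonotonical}.

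For $\abs{s}\neq 1$ the two arguments diverge. The paper picks consecutive junction times $t_{i^*},t_{i^*+1}$ with $x_1<M_1$ between them, reads off the sign of $u$ and of $\lambda_1$ near these endpoints via Lemma~\ref{lemma:costate}.\ref{lemma:costate_sign}, deduces $\lambda_2<0$ from $\dot\lambda_1\left(t_{i^*}^+\right)$, and then concludes $\lambda_1>0$ on all of $\left(t_{i^*},t_\infty\right)$. You instead split directly on the constant sign of $\lambda_2$ and show that $\lambda_1$ is monotone in either case, using only continuity of $\lambda_1$, $\lambda_1\equiv 0$ on constrained arcs, and $\dot\lambda_1=-\lambda_2$ on unconstrained arcs. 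This unifies all three conclusions under the single monotonicity mechanism you announce at the start and bypasses the local analysis at $t_{i^*}$. Your closing caveat about the subcase $\lambda_2<0$ is slightly over-cautious: the continuity of $\lambda_1$ together with strict monotonicity on unconstrained arcs already excludes two constrained $x_1$-arcs (between them $\lambda_1$ would have to rise strictly from $0$ back to $0$), so no appeal to Proposition~\ref{prop:chattering_nosystembehavior} is needed.
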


\begin{proof}
Assume that $\sgn\left(s\right)=+1$. By \eqref{eq:junction_condition}, $\lambda_{\abs{s}}$ jumps decreasingly at junction time. Assume that $\dot{\lambda}_{\abs{s}}\leq0$. Then, $\lambda_{\abs{s}}$ decreases monotonically during $\left(t_0,t_\infty\right)$, which contradicts Proposition \ref{prop:ChatteringNecessary_nonmonotonical}. Therefore, $\dot{\lambda}_{\abs{s}}\leq0$ does not always hold. By \eqref{eq:derivative_costate_lambdan_zero}, $\abs{s}\not=n$. As assumed in this section, $\sgn\left(\lambda_{\abs{s}+1}\right)\equiv\const$ during $\left(t_0,t_\infty\right)$. Hence, $\sgn\left(\lambda_{\abs{s}+1}\right)\equiv-\sgn\left(s\right)=-1$.

Assume that $n>1$ and $s=\overline{1}$. According to Lemma \ref{lemma:costate}.\ref{lemma:lambda1continue}, $\lambda_1$ is continuous on $\left[t_0,t_\infty\right]$ despite the junction condition \eqref{eq:junction_condition}. $\exists i^*\in\N^*$, $x_1\left(t_{i^*}\right)=M_1$, and $x_1\left(t\right)<M_1$ on $t\in\left(t_{i^*},t_{i^*+1}\right)$. Then, Lemma \ref{lemma:costate}.\ref{lemma:costate_sign} implies that $\forall t\in\left(t_{i^*},t_{i^*+1}\right)$, $u\left(t\right)\equiv-M_0$; hence, $\lambda_1>0$ on $t\in\left(t_{i^*},t_{i^*+1}\right)$. Either $x_1\left(t\right)\equiv M_1$ or $x_1\left(t\right)<M_1$ holds on $t\in\left(t_{i^*-1},t_{i^*}\right)$; hence, $\lambda_1\left(t\right)\leq0$ on $t\in\left(t_{i^*-1},t_{i^*}\right)$. By $\lambda_2\left(t_{i^*+1}^+\right)=-\dot\lambda_1\left(t_{i^*+1}^+\right)<0$ and the assumption that $\sgn\left(\lambda_2\right)\equiv\mathrm{const}$, $\forall t\in\left(t_0,t_\infty\right)$, $\lambda_2\left(t\right)<0$. $\lambda_1\left(t_{^*}\right)=0$ implies that $\lambda_1\left(t\right)>0$ on $t\in\left(t_{i^*},t_\infty\right)$, which contradicts Proposition \ref{prop:ChatteringNecessary_nonmonotonical}. Hence, $\abs{s}\not=1$.
\end{proof}

\begin{proposition}\label{prop:chattering_nosystembehavior}
Consider the chattering constraint $s$ in problem \eqref{eq:optimalproblem}. Then, $\forall t\in\Setminus{\left(t_{ 0},t_\infty\right)}{\left\{t_i\right\}_{i= 1}^\infty}$, $ \sgn\left(s\right)x_{\abs{s}}\left(t\right)<M_{\abs{s}}$.
\end{proposition}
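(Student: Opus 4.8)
I would argue by contradiction. Suppose a constrained arc occurs in the chattering period, i.e.\ there are two consecutive junction times with $\sgn(s)x_{\abs{s}}\equiv M_{\abs{s}}$ on the whole interval between them; write this interval as $[a,b]$ with $a<b$ (so that every $t\in(a,b)\subset\Setminus{(t_0,t_\infty)}{\{t_i\}_{i=1}^\infty}$ would violate the claim) and take $\sgn(s)=+1$. The first step collapses the motion on $[a,b]$: differentiating $x_{\abs{s}}\equiv M_{\abs{s}}$ through the chain \eqref{eq:optimalproblem_dotxk}--\eqref{eq:optimalproblem_dotx1} forces $x_k\equiv0$ for $1\le k<\abs{s}$ and $u\equiv0$ on $[a,b]$; since $\abs{x_{\abs{s}}}\equiv M_{\abs{s}}$, Lemma~\ref{lemma:costate}.\ref{lemma:costate_lambdak_polynomial} gives $\lambda_k\equiv0$ on $[a,b]$ for all $1\le k\le\abs{s}$, while $\lambda_{\abs{s}+1}$ is a polynomial there with $\lambda_{\abs{s}+1}<0$ by Proposition~\ref{prop:ChatteringPhenomenaNecessaryConditions_1sn} (in particular $\lambda_{\abs{s}+1}\not\equiv0$).

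The second step extracts the local structure at the two endpoints. At $b$ the trajectory passes from the constrained arc into an unconstrained arc, so Lemma~\ref{lemma:costate}.\ref{lemma:costate_sign} gives $u(b^+)=-M_0$, hence $\lambda_1>0$ on a right neighborhood of $b$; by the junction condition \eqref{eq:junction_condition}, $\lambda_1,\dots,\lambda_{\abs{s}-1}$ are continuous at $b$ and therefore vanish there, while on the unconstrained arc immediately to the right $\lambda_k=(-1)^{k-1}\lambda_1^{(k-1)}$ for $1\le k\le\abs{s}+1$. Thus $\lambda_1$ has a zero at $b$ of multiplicity $m\ge\abs{s}-1$, and combining $\lambda_1>0$ just after $b$, the junction inequality $\lambda_{\abs{s}}(b^+)\le0$, and $\lambda_{\abs{s}+1}(b)\ne0$ pins $m$ down (to $\abs{s}-1$ when $\abs{s}$ is even and to $\abs{s}$ when $\abs{s}$ is odd, and never larger). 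The symmetric analysis at $a$, using $u(a^-)=(-1)^{\abs{s}-1}M_0$, gives the corresponding form of $\lambda_1$ near $a^-$.

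The third step --- the crux --- turns this local information into a global contradiction. The main lever is that on every unconstrained arc in $(t_0,t_\infty)$ one has $\eta_{\abs{s}}=0$, so by \eqref{eq:derivative_costate} $\dot\lambda_{\abs{s}}=-\lambda_{\abs{s}+1}>0$; hence $\lambda_{\abs{s}}$ is strictly increasing on each unconstrained arc and can only jump downward at junctions (by \eqref{eq:junction_condition}), yet it is identically $0$ on $[a,b]$. Propagating this ``monotone with downward jumps'' pattern forward from $b$ together with the pinned multiplicity $m$ of the previous step, I expect to force $\lambda_{\abs{s}}$ --- equivalently the sign of $\lambda_1$, hence $u$ --- to be eventually one-signed on a one-sided neighborhood of $t_\infty$, contradicting Proposition~\ref{prop:ChatteringNecessary_nonmonotonical}. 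The cleanest case is $\abs{s}=n-1$: then $\deg\lambda_1=n-1=\abs{s}\ge m$ forces $\lambda_1=c(t-b)^{\abs{s}}$ with $c>0$ on the entire arc to the right of $b$, so $u\equiv-M_0$ there, $x_{\abs{s}}$ strictly decreases on all of $(b,t_\infty)$, and the constraint is never touched again --- contradicting that the junction times accumulate at $t_\infty$. For the remaining orders $2\le\abs{s}\le n-2$ the obstacle, and the part of the argument I expect to require the most care, is precisely this bookkeeping: showing that the extra roots $\lambda_1$ may acquire away from $a$ and $b$ cannot restore the infinitely many sign changes of $u$ demanded by chattering, once the strict increase of $\lambda_{\abs{s}}$ on unconstrained arcs and its vanishing on $[a,b]$ are taken into account.
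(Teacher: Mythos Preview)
Your first two steps are correct and match the paper's setup: on the constrained arc $[a,b]$ one has $x_k\equiv 0$ for $k<\abs{s}$, $\lambda_k\equiv 0$ for $k\le\abs{s}$, and $\lambda_{\abs{s}+1}<0$; at $b$ the costates $\lambda_1,\dots,\lambda_{\abs{s}-1}$ vanish by continuity and $\lambda_{\abs{s}}(b^+)\le 0$ by the junction condition.

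Your third step, however, takes a global route that the paper avoids, and this is where the argument stays incomplete. You try to show that $u$ is eventually one-signed near $t_\infty$ by pinning down the multiplicity $m$ of the zero of $\lambda_1$ at $b$ and then propagating the monotone-with-jumps behaviour of $\lambda_{\abs{s}}$. Even in your ``cleanest case'' $\abs{s}=n-1$ this only closes when $\abs{s}$ is odd: your own parity computation gives $m=\abs{s}-1$ when $\abs{s}$ is even, so $\lambda_1$ retains one free root on the next arc, $u$ does switch once, and the claim that $x_{\abs{s}}$ strictly decreases on all of $(b,t_\infty)$ no longer follows. For $2\le\abs{s}\le n-2$ you offer only an expectation.

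The paper's proof is entirely local and bypasses all of this bookkeeping. On the single unconstrained arc $(b,c)$ immediately following $[a,b]$ (with $c$ the next junction time), the data you already have --- $\lambda_{\abs{s}}(b^+)\le 0$, $\dot\lambda_{\abs{s}}=-\lambda_{\abs{s}+1}>0$, and $\lambda_k(b)=0$ for $1\le k<\abs{s}$ --- give by a straightforward recursion that each $\lambda_k$, $1\le k\le\abs{s}$, has at most one root on $(b,c)$; in particular $u$ switches at most once there, say at $\tau\in(b,c]$ from $u_0$ to $-u_0$. The contradiction then comes from the \emph{state} side rather than from propagating costates: at $b$ one has $x_j(b)=0$ for $j<\abs{s}$ and $x_{\abs{s}}(b)=M_{\abs{s}}$, while at $c$ one has $x_{\abs{s}}(c)=M_{\abs{s}}$ and $x_{\abs{s}-1}(c)=0$ (because $c$ is either a tangency point or the entry into another constrained arc). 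Feeding these into the system dynamics (Proposition~\ref{prop:system_dynamics}) yields
\[
(c-b)^{\abs{s}}=2(c-\tau)^{\abs{s}}\qquad\text{and}\qquad(c-b)^{\abs{s}-1}=2(c-\tau)^{\abs{s}-1},
\]
which are incompatible since $\abs{s}\ge 2$. No parity split, no degree count, no propagation beyond the first arc. The fine multiplicity analysis you carry out in step two is correct but unnecessary: the cruder statement ``$\lambda_1$ has at most one root on $(b,c)$'' is all that is needed.
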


\begin{proof}
Evidently, the case where $\exists\hat\delta>0$, $\forall t\in\left[t_\infty-\hat\delta,t_\infty\right]$, $x_{\abs{s}}\left(t\right)\equiv M_{\abs{s}}$, contradicts the chattering phenomenon.

Assume that $\sgn\left(s\right)=+1$. By Proposition \ref{prop:ChatteringPhenomenaNecessaryConditions_1sn}, $1<\abs{s}<n$ and $\lambda_{\abs{s}+1}<0$ during $\left(t_0,t_\infty\right)$. Assume that $x_{\abs{s}}\equiv M_{\abs{s}}$ during $\left[t_1,t_2\right]$; hence, $\forall k<\abs{s}$, $x_k=\frac{\mathrm{d}^{\abs{s}-k}x_{\abs{s}}}{\mathrm{d}t^{\abs{s}-k}}\equiv0$ during $\left[t_0,t_\infty\right]$. By Lemma \ref{lemma:costate}.\ref{lemma:costate_sign}, $\lambda_{\abs{s}}\equiv0$ during $\left(t_1,t_2\right)$.

By \eqref{eq:junction_condition}, $\lambda_{\abs{s}}\left(t_2^+\right) \leq0$. Note that $\dot\lambda_{\abs{s}}=-\lambda_{\abs{s}+1}>0$; hence, during $\left(t_2,t_3\right)$, $\lambda_{\abs{s}}$ has at most one root. Since $\forall1\leq k<\abs{s}$, $\dot{\lambda}_k=-\lambda_{k+1}$, it can be proved recursively that $\lambda_1$ has at most one root during $\left(t_2,t_3\right)$, i.e., $u$ switches for at most one time. Denote $\tau_1$ as the root of $\lambda_1$ if it exists; otherwise, denote $\tau_1=t_3$. Then, $u\equiv u_0$ during $\left(t_2,\tau_1\right)$, while $u\equiv -u_0$ during $\left(\tau_1,t_3\right)$, where $u_0\in\left\{M_0,-M_0\right\}$. Note that $x_{\abs{s}}\left(t_2\right)=x_{\abs{s}}\left(t_3\right)=M_3$, $x_{\abs{s}-1}\left(t_3\right)=0$, and $\forall 1\leq k<\abs{s}$, $x_k\left(t_2\right)=0$. Considering $x_{\abs{s}}$ and $x_{\abs{s}-1}$, Proposition \ref{prop:system_dynamics} implies that

\begin{equation}
\left(t_3-t_2\right)^{k}=2\left(t_3-\tau_1\right)^{k},\,\forall k\in\left\{\abs{s},\abs{s}-1\right\},\end{equation}
which leads to a contradiction. Hence, Proposition \ref{prop:chattering_nosystembehavior} holds.

\end{proof}

\begin{remark}
Propositions \ref{prop:UniquenessOfStateConstraintChattering} and \ref{prop:chattering_nosystembehavior} implies that Figs. \ref{fig:chattering_case}(a-b) are impossible, respectively. In other words, infinite numbers of unconstrained arcs are connected at the unique constrained boundary $\left\{x_{\abs{s}}=M_{\abs{s}}\sgn\left(s\right)\right\}$, while constrained arcs do not exist during the chattering period, as shown in Fig. \ref{fig:chattering_case}(c).

\end{remark}

\subsection{Limiting Behaviors in the Chattering Period}\label{subsec:BehaviorsOfCostatesDuringChatteringPeriod}

This section analyzes the limiting behavior of states and costates in the chattering period. Proposition \ref{prop:Lambda_zero_chattering} provides the switching times of control $u$ between two junction time. Proposition \ref{prop:terminal_behaviors_states_costates_chattering} gives the convergence of $\vx$ and $\vlambda$.

\begin{proposition}\label{prop:Lambda_zero_chattering}
Consider the chattering constraint $s$. Then, $\forall i\in\N^*$, $1\leq k\leq\abs{s}$, $\lambda_k$ has at most $\left(\abs{s}-k+1\right)$ roots on $\left(t_i,t_{i+1}\right)$. So $u$ switches for at most $\abs{s}$ times during $\left(t_i,t_{i+1}\right)$.
\end{proposition}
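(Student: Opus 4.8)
The plan is to run a single Rolle's-theorem induction on the costates, anchored at index $\abs{s}+1$ and propagated downward to index $1$, and then read off the switching bound for $u$ from the sign pattern of $\lambda_1$. First I would fix $i\in\N^*$ and show that $\left(t_i,t_{i+1}\right)$ carries the clean costate dynamics $\dot\lambda_k=-\lambda_{k+1}$. Indeed, by Proposition \ref{prop:UniquenessOfStateConstraintChattering} the only constraint that can be active in the chattering period is $s$, and by Proposition \ref{prop:chattering_nosystembehavior} even $s$ satisfies $\sgn\left(s\right)x_{\abs{s}}<M_{\abs{s}}$ strictly on $\left(t_i,t_{i+1}\right)$; hence $\eta_k\equiv0$ there for every $k$. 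Since $\vlambda$ jumps only at junction time (Proposition \ref{prop:junction_condition}) and the junction set is exactly $\left\{t_j\right\}$, $\vlambda$ is continuous on the open interval, so \eqref{eq:derivative_costate} reduces to $\dot\lambda_k=-\lambda_{k+1}$ for $1\leq k<n$ and $\dot\lambda_n=0$. Consequently each $\lambda_k$ restricted to $\left(t_i,t_{i+1}\right)$ is a polynomial of degree at most $n-k$, in particular $C^\infty$ (consistent with Lemma \ref{lemma:costate}.\ref{lemma:costate_lambdak_polynomial}).

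Next I would set up the base of the induction and rule out degenerate vanishing. By Proposition \ref{prop:ChatteringPhenomenaNecessaryConditions_1sn} we have $1<\abs{s}<n$ and $\sgn\left(s\right)\lambda_{\abs{s}+1}<0$ throughout $\left(t_0,t_\infty\right)\supseteq\left(t_i,t_{i+1}\right)$, so $\lambda_{\abs{s}+1}$ has no root on $\left(t_i,t_{i+1}\right)$ — this is exactly the bound $\abs{s}-\left(\abs{s}+1\right)+1=0$. I would also record that none of $\lambda_1,\dots,\lambda_{\abs{s}+1}$ vanishes identically on $\left(t_i,t_{i+1}\right)$: if $\lambda_k\equiv0$ for some $k\leq\abs{s}$, then $\lambda_{k+1}=-\dot\lambda_k\equiv0$, and iterating forces $\lambda_{\abs{s}+1}\equiv0$, contradicting the previous line. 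Hence each $\lambda_k$ with $1\leq k\leq\abs{s}$ is a nonzero polynomial on the interval, with finitely many, isolated zeros there.

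Then I would carry out the induction. Write $N_k$ for the number of zeros of $\lambda_k$ in $\left(t_i,t_{i+1}\right)$, with $N_{\abs{s}+1}=0$, and suppose $N_{k+1}\leq\abs{s}-k$. Between any two consecutive zeros of $\lambda_k$, Rolle's theorem produces a zero of $\dot\lambda_k=-\lambda_{k+1}$, and distinct consecutive pairs yield distinct such zeros, so $N_{k+1}\geq N_k-1$, i.e. $N_k\leq N_{k+1}+1\leq\abs{s}-k+1$. Running this from $k=\abs{s}$ down to $k=1$ gives $N_k\leq\abs{s}-k+1$ for all $1\leq k\leq\abs{s}$; in particular $\lambda_1$ has at most $\abs{s}$ roots on $\left(t_i,t_{i+1}\right)$. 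Finally, by Lemma \ref{lemma:costate}.\ref{lemma:costate_bangbang}, $u=-M_0\,\sgn\left(\lambda_1\right)$ a.e., and a continuous function with at most $\abs{s}$ zeros on an interval changes sign at most $\abs{s}$ times, so $u$ switches at most $\abs{s}$ times during $\left(t_i,t_{i+1}\right)$.

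I do not anticipate a serious obstacle: the argument is essentially a textbook Rolle's-theorem chain pinned down by the sign of $\lambda_{\abs{s}+1}$. The only points requiring care are the initial bookkeeping — deducing $\eta\equiv0$ and continuity of $\vlambda$ on $\left(t_i,t_{i+1}\right)$ from Propositions \ref{prop:UniquenessOfStateConstraintChattering}, \ref{prop:chattering_nosystembehavior}, and \ref{prop:junction_condition} so that the clean costate ODE and hence the polynomial structure apply — and verifying that each $\lambda_k$ is not identically zero, so that ``number of roots'' is a finite, well-defined quantity. Counting zeros with multiplicity would give the same estimates, but distinct zeros already suffice for the $u$-switching conclusion.
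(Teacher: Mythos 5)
Your proof is correct and takes essentially the same route as the paper: the constant sign of $\lambda_{\abs{s}+1}$ from Proposition \ref{prop:ChatteringPhenomenaNecessaryConditions_1sn} anchors a downward recursion along $\dot\lambda_k=-\lambda_{k+1}$ giving $\lambda_k$ at most $\abs{s}-k+1$ roots on $\left(t_i,t_{i+1}\right)$, and Lemma \ref{lemma:costate}.\ref{lemma:costate_bangbang} converts the bound on $\lambda_1$ into the switching bound, with your Rolle-theorem counting being just the dual phrasing of the paper's monotonicity argument. The extra bookkeeping you add (clean costate dynamics and continuity on the open interval, and ruling out identically vanishing $\lambda_k$) is left implicit in the paper and is harmless.
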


\begin{proof}
Assume that $\sgn\left(s\right)=+1$. Proposition \ref{prop:ChatteringPhenomenaNecessaryConditions_1sn} implies that $\lambda_{\abs{s}+1}<0$ during $\left(t_0,t_\infty\right)$. By \eqref{eq:junction_condition}, $\forall i\in\N^*$, $\lambda_{\abs{s}}\left(t_i^+\right)\leq\lambda_{\abs{s}}\left(t_i^-\right)$. Then, $\forall i\in\N^*$, $\lambda_{\abs{s}}$ increases monotonically during $\left(t_i,t_{i+1}\right)$ and jumps decreasingly at $t_i$. Therefore, $\lambda_{\abs{s}}$ has at most one root during $\left(t_i,t_{i+1}\right)$.

$\forall 1\leq k<\abs{s}$, considering the monotonicity of $\lambda_k$, it can be proved by \eqref{eq:derivative_costate} recursively that $\lambda_k$ has at most $\left(\abs{s}-k+1\right)$ roots during $\left(t_i,t_{i+1}\right)$. Specifically, $\lambda_1$ has at most $\abs{s}$ roots during $\left(t_i,t_{i+1}\right)$. By Lemma \ref{lemma:costate}.\ref{lemma:costate_bangbang}, $u$ switches for at most $\abs{s}$ times during $\left(t_i,t_{i+1}\right)$. Therefore, Proposition \ref{prop:Lambda_zero_chattering} holds.
\end{proof}

\begin{proposition}[Convergence of $\vx$ and $\vlambda$ to $t_\infty$]\label{prop:terminal_behaviors_states_costates_chattering}
Consider the chattering constraint $s$. $ \forall1\leq k\leq\abs{s}$, it holds that:
\begin{enumerate}
\item\label{prop:terminal_behaviors_states_chattering} $\forall\delta>0$, $\sup_{t\in\left(t_\infty-\delta,t_\infty\right)}\abs{x_k\left(t\right)-x_k\left(t_\infty\right)}=\mathcal{O}(\delta^k)$. In particular, $\forall 1\leq k<\abs{s}$, $\lim_{t\to t_\infty}x_k\left(t\right)=x_k\left(t_\infty\right)=0$. For $\abs{s}$, $\lim_{t\to t_\infty}x_{\abs{s}}\left(t\right)=x_{\abs{s}}\left(t_\infty\right)=M_{\abs{s}}\sgn\left(s\right)$.
\item\label{prop:terminal_behaviors_costates_chattering} $\forall\delta>0$, $\sup_{t\in\left(t_\infty-\delta,t_\infty\right)}\abs{\lambda_k}=\mathcal{O}(\delta^{\abs{s}-k+1})$. Furthermore, $\lim_{t\to t_\infty}\lambda_k\left(t\right)=\lambda_k\left(t_\infty\right)=0$.
\end{enumerate}
\end{proposition}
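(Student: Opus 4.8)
\textbf{Proof plan for Proposition~\ref{prop:terminal_behaviors_states_costates_chattering}.}
The plan is to exploit the structure already established: $s$ is the unique chattering constraint, $\sgn(s)=+1$, the junction times $\{t_i\}$ increase to $t_\infty$, and (Proposition~\ref{prop:chattering_nosystembehavior}) no constrained arcs occur, so $x_{\abs{s}}(t)<M_{\abs{s}}$ except at the $t_i$, while $x_{\abs{s}}(t_i)=M_{\abs{s}}$. First I would prove the costate estimate. By Lemma~\ref{lemma:costate}.\ref{lemma:costate_lambdak_polynomial}, $\lambda_{\abs{s}+1}$ is a polynomial of degree at most $n-\abs{s}-1$, hence bounded on $[t_0,t_\infty]$, so $\dot\lambda_{\abs{s}}=-\lambda_{\abs{s}+1}$ is bounded there; combined with the fact (from Proposition~\ref{prop:Lambda_zero_chattering}) that on each $(t_i,t_{i+1})$ the function $\lambda_{\abs{s}}$ increases monotonically and jumps decreasingly at each $t_i$, the value $\lambda_{\abs{s}}$ cannot drift away from $0$: since it must cross $0$ in a left-neighborhood of $t_\infty$ infinitely often (Proposition~\ref{prop:ChatteringNecessary_nonmonotonical}) and $|\dot\lambda_{\abs{s}}|\le C$, one gets $\sup_{(t_\infty-\delta,t_\infty)}|\lambda_{\abs{s}}|=\mathcal{O}(\delta)$ and $\lambda_{\abs{s}}(t_\infty)=0$. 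Then I would induct downward: for $1\le k<\abs{s}$, $\dot\lambda_k=-\lambda_{k+1}$ (the $\eta_k$ term vanishes since only $s$ is active and $\abs{s}\ne k$), and integrating the $\mathcal{O}(\delta^{\abs{s}-k})$ bound on $\lambda_{k+1}$ over an interval of length $\le\delta$ — while using that $\lambda_k$ itself has a root within $\mathcal{O}(\delta)$ of any point near $t_\infty$ (again Proposition~\ref{prop:ChatteringNecessary_nonmonotonical}) to pin the constant of integration — yields $\sup|\lambda_k|=\mathcal{O}(\delta^{\abs{s}-k+1})$ and $\lambda_k(t_\infty)=0$.

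For the state estimates I would argue similarly but from the top down in the index, using the system dynamics $\dot x_k=x_{k-1}$ for $k>1$ and $\dot x_1=u$ with $|u|\le M_0$. Since $|\dot x_1|\le M_0$ and $x_1$ is continuous, $x_1$ is Lipschitz, so $\sup_{(t_\infty-\delta,t_\infty)}|x_1(t)-x_1(t_\infty)|=\mathcal{O}(\delta)$ and the limit $x_1(t_\infty)$ exists. Inductively, if $\sup|x_{k-1}(t)-x_{k-1}(t_\infty)|=\mathcal{O}(\delta^{k-1})$ then integrating $\dot x_k=x_{k-1}$ gives $x_k(t)-x_k(t_\infty)=\int_t^{t_\infty}\!\big(x_{k-1}(t_\infty)-x_{k-1}(\tau)\big)\,d\tau - x_{k-1}(t_\infty)(t_\infty-t)$; to get the sharp $\mathcal{O}(\delta^k)$ I must show the ``linear-in-$\delta$'' term $x_{k-1}(t_\infty)(t_\infty-t)$ is absorbed, which forces $x_{k-1}(t_\infty)=0$ for every $k-1<\abs{s}$. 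That vanishing comes from the constraint geometry: for $1\le k<\abs{s}$, between consecutive junctions $x_{\abs{s}}$ returns to the value $M_{\abs{s}}$ at both endpoints $t_i,t_{i+1}$, so $\int_{t_i}^{t_{i+1}}x_{\abs{s}-1}=0$ and, more usefully, Rolle's theorem applied $\abs{s}-k$ times to $x_{\abs{s}}-M_{\abs{s}}$ (which vanishes at all $t_i$) gives a zero of $x_k$ in each $(t_i,t_{i+\abs{s}-k})$; letting $i\to\infty$, a zero of $x_k$ accumulates at $t_\infty$, and combined with $x_k$ being $\mathcal{O}(\delta^{k-1})$-close to the constant $x_k(t_\infty)$ (so $x_k$ is actually continuous at $t_\infty$) forces $x_k(t_\infty)=0$. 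For $k=\abs{s}$ itself, continuity at $t_\infty$ plus $x_{\abs{s}}(t_i)=M_{\abs{s}}$ with $t_i\to t_\infty$ gives $x_{\abs{s}}(t_\infty)=M_{\abs{s}}=M_{\abs{s}}\sgn(s)$.

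I would organize the write-up as: (i) establish $\sgn(s)=+1$ WLOG and recall $\{t_i\}\uparrow t_\infty$; (ii) prove the costate chain of estimates by downward induction on $k$ from $\abs{s}$, anchoring each step with a nearby root supplied by Proposition~\ref{prop:ChatteringNecessary_nonmonotonical} and a uniform derivative bound; (iii) prove the Rolle-type accumulation of zeros of each $x_k$ ($k<\abs{s}$) at $t_\infty$; (iv) prove the state estimates and the limits by upward induction on $k$, feeding in the vanishing of $x_k(t_\infty)$ from (iii) at each stage; (v) treat $k=\abs{s}$ separately via the junction values. The main obstacle I anticipate is step (iv): getting the \emph{sharp} exponent $\delta^k$ rather than a crude $\delta$ requires simultaneously propagating the bound \emph{and} the vanishing of the lower limits $x_j(t_\infty)=0$, so the induction hypothesis must carry both pieces together; a secondary subtlety is making the "anchor a root within $\mathcal{O}(\delta)$" argument uniform in $\delta$, which relies on the junction times genuinely accumulating at $t_\infty$ rather than merely on non-monotonicity, but that accumulation is exactly what Proposition~\ref{prop:ChatteringNecessary_nonmonotonical} together with Proposition~\ref{prop:Lambda_zero_chattering} deliver.
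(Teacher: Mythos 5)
Your plan is correct and follows the paper's own proof essentially step for step: Rolle's theorem applied to $x_{\abs{s}}-M_{\abs{s}}\sgn\left(s\right)$ at the accumulating junction times plus continuity gives $x_k\left(t_\infty\right)=0$ for $k<\abs{s}$ and hence the sharp $\mathcal{O}\left(\delta^k\right)$ bound by integrating $\dot{x}_k=x_{k-1}$ upward, while the costate estimates come from the boundedness of the polynomial $\lambda_{\abs{s}+1}$, the monotone-increase/decreasing-jump structure of $\lambda_{\abs{s}}$, and downward integration anchored at the zero crossings supplied by Proposition \ref{prop:ChatteringNecessary_nonmonotonical}. This is the same decomposition and the same key lemmas as in the paper, so no further comparison is needed.
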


\begin{proof}
Consider the case where $\sgn\left(s\right)=+1$. Note that $\forall i\in\N^*$, $x_{\abs{s}}\left(t_i\right)=M_{\abs{s}}$. As $i\to\infty$, it holds that $t_i\to t_\infty$, $x_{\abs{s}}\left(t_i\right)\to M_{\abs{s}}$. Since $x_{\abs{s}}$ is continuous, $x_{\abs{s}}\left(t_\infty\right)=M_{\abs{s}}$; hence, $\lim_{t\to t_\infty}x_{\abs{s}}\left(t\right)=M_{\abs{s}}\sgn\left(s\right)$.

Applying Rolle's theorem \cite{stein2009real} recursively, $\forall 1\leq k<\abs{s}$, $\exists\left\{t_i^{\left(k\right)}\right\}_{i=1}^\infty$ increasing monotonically and converging to $t_\infty$, s.t. $\forall i\in\N^*$, $x_k\left(t_i^{\left(k\right)}\right)=0$. The continuity of $x_k$ implies that $\lim_{t\to t_\infty}x_k\left(t\right)=x_k\left(t_\infty\right)=\lim_{i\to\infty}x_k\left(t_i^{\left(k\right)}\right)=0$. Note that $\forall 1\leq k\leq \abs{s}$, $\delta>0$, $\sup_{t\in\left(t_\infty-\delta,t_\infty\right)}\abs{x_k\left(t\right)-x_k\left(t_\infty\right)}\leq\frac{M_0}{k!}\delta^k=\mathcal{O}\left(\delta^k\right)$. Therefore, Proposition \ref{prop:terminal_behaviors_states_costates_chattering}.\ref{prop:terminal_behaviors_states_chattering} holds.

For $\abs{s}$, note that $\forall i\in\N^*$, $\lambda_{\abs{s}}$ increases monotonically during $\left(t_i,t_{i+1}\right)$ and jumps decreasingly at $t_i$. By Proposition \ref{prop:ChatteringNecessary_nonmonotonical}, $\lambda_{\abs{s}}$ crosses 0 for infinitely many times during $\left(t_0,t_\infty\right)$. So $\exists\left\{t_i^{\left(\abs{s}\right)}\right\}_{i=1}^\infty$ increasing monotonically, s.t. $\lim_{i\to\infty}t_i^{\left(\abs{s}\right)}=t_\infty$ and $\forall i\in\N^*$, $\lambda_{\abs{s}}\left(t_i^{\left(\abs{s}\right)}\right)=0$. Denote $t_0^{\left(\abs{s}\right)}=t_0$ and
\begin{equation}
\norm[\infty]{\lambda_{\abs{s}+1}}=\sup_{t\in\left(t_0,t_\infty\right)}\abs{\lambda_{\abs{s}+1}\left(t\right)}<\infty.\end{equation}
Then, $\forall i\in\N^*$, $\lambda_{\abs{s}}\left(t_i^{\left(\abs{s}\right)}\right)=0$; hence, $\forall t\in\left[t_{i-1}^{\left(\abs{s}\right)},t_i^{\left(\abs{s}\right)}\right]$,
\begin{equation}
\begin{aligned}
&\abs{\lambda_{\abs{s}}\left(t\right)}=\abs{\lambda_{\abs{s}}\left(t\right)-\lambda_{\abs{s}}\left(t_i^{\left(\abs{s}\right)}\right)}\\
\leq&\norm[\infty]{\lambda_{\abs{s}+1}}\left(t_i^{\left(\abs{s}\right)}-t\right)\leq\norm[\infty]{\lambda_{\abs{s}+1}}\left(t_\infty-t\right).
\end{aligned}\end{equation}
Therefore, $\forall\delta>0$, $\sup_{t\in\left(t_\infty-\delta,t_\infty\right)}\abs{\lambda_{\abs{s}}}\leq\norm[\infty]{\lambda_{\abs{s}+1}}\delta=\mathcal{O}(\delta)$. Define $\lambda_{\abs{s}}\left(t_\infty\right)=0$. Then, $\varlimsup_{t\to t_\infty}\abs{\lambda_{\abs{s}}\left(t\right)}=0=\lambda_{\abs{s}}\left(t_\infty\right)$. Proposition \ref{prop:terminal_behaviors_states_costates_chattering}.\ref{prop:terminal_behaviors_costates_chattering} holds for $\abs{s}$.

Note that $\forall 1\leq k<\abs{s}$, $\lambda_k$ is continuous. By Proposition \ref{prop:ChatteringNecessary_nonmonotonical}, $\exists\left\{t_i^{\left(k\right)}\right\}_{i=1}^\infty$ increasing monotonically and converging to $t_\infty$, s.t. $\forall i\in\N^*$, $\lambda_k\left(t_i^{\left(k\right)}\right)=0$. Similarly to the analysis for $\abs{s}$, $\forall t\in\left(t_0,t_\infty\right)$, $\abs{\lambda_k\left(t\right)}\leq\frac{\norm[\infty]{\lambda_{\abs{s}+1}}}{\left(\abs{s}-k+1\right)!}\left(t_\infty-t\right)^{\abs{s}-k+1}$. For the same reason, Proposition \ref{prop:terminal_behaviors_states_costates_chattering}.\ref{prop:terminal_behaviors_costates_chattering} holds for $1\leq k<\abs{s}$.
\end{proof}

Under the assumption that chattering occurs in problem \eqref{eq:optimalproblem}, the results of Section \ref{sec:NecessaryConditionsChattering} is summarized in Theorem \ref{thm:ChatteringPhenomenaBehavior} which provides insight into the behavior of states, costates, and control near the limit time.

\section{Chattering in 4th-Order Problems with Velocity Constraints}\label{sec:ChatteringPhenomena4thOrder}
This section proves that chattering phenomena can occur when $n=4$ and $\abs{s}=3$, rectifying a longstanding misconception in the industry concerning the optimality of S-shaped trajectories. In other words, problems \eqref{eq:optimalproblem} of 4th-order with velocity constraints represent problems of the lowest order where chattering phenomena can occur.

Assume that $s=\overline{3}$ in this section. Firstly, problem \eqref{eq:optimalproblem_n4s3_equivalent} is analyzed from the Hamiltonian perspective in Section \ref{subsec:n4s3_equivalent_costate}. Then, Section \ref{subsec:n4s3_equivalent_solution} proves Theorem \ref{thm:Chattering_n4s3_equivalent_optimal_y11}, i.e., solving the chattering optimal control of problem \eqref{eq:optimalproblem_n4s3_equivalent}. Thirdly, Section \ref{subsec:n4s3_optimal_xu} proves Theorem \ref{thm:Chattering_n4s3_equivalent}, i.e., transforming problem \eqref{eq:optimalproblem_n4s3} into the parameter-free infinite-horizon problem \eqref{eq:optimalproblem_n4s3_equivalent}. Finally, some discussions are provided in Section \ref{subsec:n4s3_Discussion}.

\subsection{Costate Analysis of Problem \eqref{eq:optimalproblem_n4s3_equivalent}}\label{subsec:n4s3_equivalent_costate}

To solve problem \eqref{eq:optimalproblem_n4s3_equivalent}, the costate analysis of problem \eqref{eq:optimalproblem_n4s3_equivalent} is performed in this section as preliminaries. Denote $\vp\left(\tau\right)=\left(p_k\left(\tau\right)\right)_{k=1}^3$ as the costate vector, which satisfies $p_0\geq0$ and $\left(p_0,\vp\left(\tau\right)\right)\not= \vzero$. The Hamiltonian of problem \eqref{eq:optimalproblem_n4s3_equivalent} is
\begin{equation}\label{eq:hamilton_n4s3_equivalent}
\begin{aligned}
&\widehat{\hamilton}\left(\vy\left(\tau\right),v\left(\tau\right),p_0,\vp\left(\tau\right),\zeta\left(\tau\right),\tau\right)\\
=&p_0y_3+p_1v+p_2y_1+p_3y_2-\zeta y_3
\end{aligned}\end{equation}
where $\zeta\geq0$, $\zeta y_3=0$. The Hamilton's equations \cite{fox1987introduction} imply that $ \dot{\vp}=-\frac{\partial\widehat{\hamilton}}{\partial \vy}$, i.e.,
\begin{equation}\label{eq:dot_costate_n4s3_equivalent}
\dot{p}_1=-p_2,\,\dot{p}_2=-p_3,\,\dot{p}_3=-p_0+\zeta.\end{equation}
Note that $\frac{\partial \widehat{\hamilton}}{\partial \tau}=0$; hence, $\forall \tau>0$,
\begin{equation}\label{eq:hamiltonian_constant_n4s3_equivalent}
\widehat{\hamilton}\left(\vy\left(\tau\right),v\left(\tau\right),p_0,\vp\left(\tau\right),\zeta\left(\tau\right),\tau\right)\equiv0.\end{equation}

PMP implies that
\begin{equation}
v\left(\tau\right)\in\mathop{\arg\min}\limits_{\abs{V}\leq 1}\widehat\hamilton\left(\vy\left(\tau\right),V,p_0,\vp\left(\tau\right),\zeta\left(\tau\right),\tau\right),\end{equation}
i.e.,
\begin{equation}\label{eq:Chattering_n4s3_equivalent_BangBang_star}
v\left(\tau\right)=-\sgn\left(p_1\left(\tau\right)\right),\text{ if }p_1\left(\tau\right)\not=0.\end{equation}

If the constraint $y_3\geq0$ switches between active and inactive at $\tau_1 >0$, then the junction condition \cite{jacobson1971new} occurs that
\begin{equation}\label{eq:Chattering_n4s3_equivalent_junction}
\exists\mu\geq0,\,p_3\left(\tau_1^+\right)-p_3\left(\tau_1^-\right)=\mu.\end{equation}
During the optimal trajectory, $p_1$ and $p_2$ keep continuous, while $p_3$ can jump at junction time.

\begin{proposition}[Optimal Control's Behavior of Problem \eqref{eq:optimalproblem_n4s3_equivalent}]\label{prop:n4s3_equivalent_costate}
The following properties hold for the optimal control of problem  \eqref{eq:optimalproblem_n4s3_equivalent}.
\begin{enumerate}
\item\label{prop:n4s3_equivalent_costate_p1_0} $p_1\equiv0$ holds for a period if and only if $y_3\equiv0$.
\item\label{prop:n4s3_equivalent_BangBang} $v=-\sgn\left(p_1\right)$ holds a.e. during $\tau>0$. In other words, the bang-bang and singular controls hold as follows:
\begin{equation}\label{eq:Chattering_n4s3_equivalent_BangBang}
v\left(\tau\right)=\begin{dcases}
1,&p_1\left(\tau\right)<0,\\
0,&p_1\left(\tau\right)=0,\\
-1,&p_1\left(\tau\right)>0
\end{dcases}\,\text{a.e.}\end{equation}
\item\label{prop:n4s3_equivalent_costate_unique} Problem \eqref{eq:optimalproblem_n4s3_equivalent} has a unique optimal solution.

\item\label{prop:n4s3_equivalent_costate_polynomial} If $y_3>0$ during $\left(\tau_{ 1},\tau_{ 2}\right)$, then $p_k$ is a polynomial of at most order $\left(4-k\right)$ w.r.t. $\tau$ for $k=1,2,3$. Furthermore, $v$ switches for at most 3 times during $\left(\tau_{ 1},\tau_{ 2}\right)$.
\end{enumerate}
\end{proposition}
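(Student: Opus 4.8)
\noindent\emph{Proof plan.} The four items are naturally established in the order \ref{prop:n4s3_equivalent_costate_p1_0}, \ref{prop:n4s3_equivalent_costate_polynomial}, \ref{prop:n4s3_equivalent_BangBang}, \ref{prop:n4s3_equivalent_costate_unique}, each relying on the previous ones. For item~\ref{prop:n4s3_equivalent_costate_p1_0} I would argue both implications directly from the optimality system. If $y_3\equiv 0$ on an interval, then successive differentiation of the dynamics forces $y_2\equiv y_1\equiv 0$ and hence $v\equiv 0$ a.e.\ there; were $p_1(\tau^\ast)\neq 0$ at an interior point $\tau^\ast$, continuity of $p_1$ and \eqref{eq:Chattering_n4s3_equivalent_BangBang_star} would give $v=-\sgn(p_1)=\pm1$ on a neighbourhood of $\tau^\ast$, contradicting $v\equiv 0$; so $p_1\equiv 0$. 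Conversely, if $p_1\equiv 0$ on an interval then \eqref{eq:dot_costate_n4s3_equivalent} gives $p_2=-\dot p_1\equiv 0$, then $p_3=-\dot p_2\equiv 0$, and then $0=\dot p_3=-p_0+\zeta$, so $\zeta\equiv p_0$ there; since $(p_0,\vp(\tau))\neq\vzero$ while $\vp$ vanishes on the interval, necessarily $p_0>0$, hence $\zeta=p_0>0$, and the complementarity $\zeta y_3=0$ yields $y_3\equiv 0$.

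Item~\ref{prop:n4s3_equivalent_costate_polynomial} then follows by integrating the costate equations: on an arc with $y_3>0$, complementarity gives $\zeta\equiv 0$, so by \eqref{eq:dot_costate_n4s3_equivalent} $\dot p_3\equiv-p_0$ is constant and $p_3,p_2,p_1$ are polynomials of degree at most $1,2,3$, i.e.\ at most $4-k$ for $p_k$; item~\ref{prop:n4s3_equivalent_costate_p1_0} forbids $p_1\equiv 0$ here, so $p_1$ is a non-zero cubic with at most three real roots, and since $v=-\sgn(p_1)$ off that finite set, $v$ is piecewise constant and switches at most three times. For item~\ref{prop:n4s3_equivalent_BangBang}, \eqref{eq:Chattering_n4s3_equivalent_BangBang_star} already gives $v=-\sgn(p_1)$ wherever $p_1\neq 0$, so it remains to show $v=0$ a.e.\ on $Z=\{\,p_1=0\,\}$: on the interior of $\{\,y_3=0\,\}$ this is the computation of item~\ref{prop:n4s3_equivalent_costate_p1_0}, and on each maximal arc with $y_3>0$ item~\ref{prop:n4s3_equivalent_costate_polynomial} makes $Z$ finite. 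The remaining part of $Z$ I would show is Lebesgue-null by a density-point cascade: at a density point of $Z$ the $C^1$ function $p_1$ has vanishing derivative, so $p_2=0$; iterating through $\dot p_2=-p_3$ and $\dot p_3=-p_0+\zeta$ forces $p_3=0$ and then (with $p_0>0$ and complementarity) $y_3=0$ a.e.\ on $Z$, after which $\dot y_3=y_2$, $\dot y_2=y_1$, $\dot y_1=v$ give $v=0$ a.e.\ on $Z$. Alternatively one may transplant the a.e.\ bang--bang argument of Lemma~\ref{lemma:costate}.\ref{lemma:costate_bangbang} verbatim.

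For item~\ref{prop:n4s3_equivalent_costate_unique}, existence of an optimal control I would get from a standard weak-$\ast$ compactness argument: a minimizing sequence $v_n$ with $\widehat J[v_n]\to\widehat J^\ast<\infty$ has a weak-$\ast$ limit $v^\ast$ with $|v^\ast|\leq 1$, the associated $y_3$-trajectories converge locally uniformly (the state is affine and continuous in $v$) to a nonnegative limit, and Fatou's lemma gives $\widehat J[v^\ast]\leq\widehat J^\ast$. Uniqueness I would obtain from convexity: the feasible set is convex (the state is affine in $v$, so $y_3\geq 0$ and $|v|\leq 1$ are convex constraints) and $\widehat J$ is affine in $v$, hence the minimizer set is convex; if $v_1^\ast\neq v_2^\ast$ on a set of positive measure, then for any $\lambda\in(0,1)\setminus\{\tfrac12\}$ the admissible $v_\lambda=\lambda v_1^\ast+(1-\lambda)v_2^\ast$ would be optimal yet, by item~\ref{prop:n4s3_equivalent_BangBang}, $\{-1,0,1\}$-valued a.e., which is impossible for a genuine convex combination of two $\{-1,0,1\}$-valued functions; taking $\lambda=\tfrac13$ forces $v_1^\ast=v_2^\ast$ a.e. This also mirrors Lemma~\ref{lemma:costate}.\ref{lemma:costate_uniqueoptimalcontrol}.

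The routine parts are the costate-ODE integration in items~\ref{prop:n4s3_equivalent_costate_p1_0} and~\ref{prop:n4s3_equivalent_costate_polynomial} and the convexity argument in item~\ref{prop:n4s3_equivalent_costate_unique}. The main obstacle is the measure-theoretic bookkeeping in item~\ref{prop:n4s3_equivalent_BangBang}: rigorously showing that, even in the chattering regime where the arc endpoints accumulate at $\tau_\infty$, the zero set of $p_1$ outside the constrained arcs is Lebesgue-null, so that the relation $v=-\sgn(p_1)$ upgrades from ``on $\{\,p_1\neq0\,\}$'' to ``almost everywhere''.
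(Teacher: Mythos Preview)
Your proposal is correct and follows essentially the same route as the paper: integrate the costate ODE for item~\ref{prop:n4s3_equivalent_costate_polynomial}, use item~\ref{prop:n4s3_equivalent_costate_p1_0} to handle the singular set in item~\ref{prop:n4s3_equivalent_BangBang}, and prove uniqueness in item~\ref{prop:n4s3_equivalent_costate_unique} via the same convex-combination trick (the paper uses $\tfrac34 v_1^\ast+\tfrac14 v_2^\ast$, you use $\lambda=\tfrac13$). Two small differences are worth noting. For the direction $p_1\equiv0\Rightarrow y_3\equiv0$ in item~\ref{prop:n4s3_equivalent_costate_p1_0}, the paper argues by contradiction using the Hamiltonian identity \eqref{eq:hamiltonian_constant_n4s3_equivalent} ($p_0y_3\equiv0$ forces $p_0=0$), whereas you go directly through $\dot p_3=-p_0+\zeta$ and complementarity; both are valid and equally short. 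For item~\ref{prop:n4s3_equivalent_BangBang}, the paper is terse and implicitly treats $\{p_1=0\}\setminus\{\text{intervals where }p_1\equiv0\}$ as null, while your density-point cascade makes this step rigorous; your extra care here, and the explicit existence argument you supply for item~\ref{prop:n4s3_equivalent_costate_unique}, are additions rather than deviations.
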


\begin{proof}
For Proposition \ref{prop:n4s3_equivalent_costate}.\ref{prop:n4s3_equivalent_costate_p1_0}, assume that during $\left(\tau_1,\tau_2\right)$, $p_1\equiv0$ but $y_3>0$. By \eqref{eq:dot_costate_n4s3_equivalent}, $\vp\equiv\vzero$ since $\zeta\equiv0$. Eq. \eqref{eq:hamiltonian_constant_n4s3_equivalent} implies that $p_0y_3\equiv0$; hence, $p_0=0$, which leads to a contradiction against $\left(p_0,\vp\right)\not= \vzero$. Therefore, if $p_1\equiv0$, then $y_3\equiv0$.

Assume that during $\left(\tau_1,\tau_2\right)$, $y_3\equiv0$, then $\vy\equiv0$ and $v\equiv0$. According to \eqref{eq:Chattering_n4s3_equivalent_BangBang_star}, $p_1\equiv0$. Therefore, Proposition \ref{prop:n4s3_equivalent_costate}.\ref{prop:n4s3_equivalent_costate_p1_0} holds.

Proposition \ref{prop:n4s3_equivalent_costate}.\ref{prop:n4s3_equivalent_costate_p1_0} implies that if $p_1\equiv0$, then $v=\dddot{y}_3\equiv0$. Hence, Eq. \eqref{eq:Chattering_n4s3_equivalent_BangBang} holds a.e. due to \eqref{eq:Chattering_n4s3_equivalent_BangBang_star}. Proposition \ref{prop:n4s3_equivalent_costate}.\ref{prop:n4s3_equivalent_BangBang} holds.

For Proposition \ref{prop:n4s3_equivalent_costate}.\ref{prop:n4s3_equivalent_costate_unique}, assume that $v_1^*$ and $v_2^*$ are both the optimal control of problem \eqref{eq:optimalproblem_n4s3_equivalent}. Note that $\widehat{J}\left[v_1^*\right]=\widehat{J}\left[v_2^*\right]=\widehat{J}\left[v_3^*\right]$, where $v_3^*=\frac{3v_1^*+v_2^*}{4}$; hence, $v_3^*$ is also an optimal control. According to Proposition \ref{prop:n4s3_equivalent_costate}.\ref{prop:n4s3_equivalent_BangBang}, $\forall 1\leq k\leq 3$, $ \nu\left(Q_k\right)=0$ holds, where $\nu$ is the Lebesgue measure on $\R$ and $Q_k\triangleq\left\{\tau>0:v_k^*\left(\tau\right)\not\in\left\{0,\pm1\right\}\right\}$. Denote $P\triangleq\left\{\tau>0:v_1^*\left(\tau\right)\not=v_2^*\left(\tau\right)\right\}$. Then, $\forall \tau\in \Setminus{P}{\left(Q_1\cup Q_2\right)}$, $v_3^*\left(\tau\right)\not\in\left\{0,\pm1\right\}$; hence, $\Setminus{P}{\left(Q_1\cup Q_2\right)}\subset Q_3$. Therefore,
\begin{equation}
\begin{aligned}
&0\leq\nu\left(P\right)=\nu\left(P\right)-\nu\left(Q_1\right)-\nu\left(Q_2\right)\\
\leq&\nu\left(\Setminus{P}{\left(Q_1\cup Q_2\right)}\right)\leq\nu\left(Q_3\right)=0.
\end{aligned}\end{equation}
Hence, $ \nu\left(P\right)=0$, i.e., $v_1^*=v_2^*$ a.e. Proposition \ref{prop:n4s3_equivalent_costate}.\ref{prop:n4s3_equivalent_costate_unique} holds.

If $y_3>0$ during $\left(\tau_{1},\tau_{2}\right)$, then $\zeta\equiv0$; hence, Proposition \ref{prop:n4s3_equivalent_costate}.\ref{prop:n4s3_equivalent_costate_polynomial} holds evidently due to \eqref{eq:dot_costate_n4s3_equivalent}.
\end{proof}

\subsection{Optimal Solution of Problem \eqref{eq:optimalproblem_n4s3_equivalent}}\label{subsec:n4s3_equivalent_solution}
Inspired by Fuller's problem \cite{fuller1963study} and Robbins' problem \cite{robbins1980junction}, this section solves problem \eqref{eq:optimalproblem_n4s3_equivalent} through 3 steps. Firstly, Proposition \ref{prop:n4s3_equivalent_recursive} proves the existence of chattering and provides a homogenous relationship w.r.t. controls and states. Then, the chattering trajectory is characterized by Propositions \ref{prop:n4s3_equivalent_recursive_final} and \ref{prop:n4s3_equivalent_beta}. Finally, the optimal solution of problem \eqref{eq:optimalproblem_n4s3_equivalent} is provided in Theorem \ref{thm:Chattering_n4s3_equivalent_optimal_y11}.

To solve problem \eqref{eq:optimalproblem_n4s3_equivalent} recursively, denote $\widehat{J}\left[v; \alpha\right]$ as the objective value of problem \eqref{eq:optimalproblem_n4s3_equivalent} with the initial state $ \alpha\ve_1$ and control $v$. Let $\widehat{J}^*\left( \alpha\right)\triangleq\inf_{v}\widehat{J}\left[v; \alpha\right]$. In other words, the optimal value of the original problem \eqref{eq:optimalproblem_n4s3_equivalent} is $\widehat{J}^*=\widehat{J}^*\left(1\right)$. Denote the optimal control of problem \eqref{eq:optimalproblem_n4s3_equivalent} with initial state vector $ \alpha\ve_1$ as $v^*\left(\tau; \alpha\right)$, where the optimal trajectory is $\vy^*\left(\tau; \alpha\right)$. A homogenous relationship w.r.t. controls and states is provided in Proposition \ref{prop:n4s3_equivalent_recursive}, where the assumption $\tau_\infty<\infty$ in Theorem \ref{thm:Chattering_n4s3_equivalent} is achieved.

\begin{proposition}\label{prop:n4s3_equivalent_recursive}
$\forall \alpha>0$, the following conclusions hold:
\begin{enumerate}
\item\label{prop:n4s3_equivalent_recursive_a} $v\left(\tau\right)$ with $y_k\left(\tau\right)$, $k=1,2,3$, is feasible under the initial state vector $\ve_1$, if and only if $v' \left(\tau\right)=v\left(\frac{\tau}{\alpha}\right)$ with $y_k' \left(\tau\right)=\alpha^k y_k\left(\frac{\tau}{\alpha}\right)$, $k=1,2,3$, is feasible under the initial state vector $\alpha\ve_1$. Furthermore, $\widehat{J}\left[v';\alpha\right]=\alpha^4 \widehat{J}\left[v\right]$.
\item\label{prop:n4s3_equivalent_recursive_a_optimal} For the optimal solution, it holds that $\widehat{J}^*\left(\alpha\right)=\alpha^4\widehat{J}^*$, $v^*\left(\tau;\alpha\right)=v^*\left(\frac{\tau}{\alpha}\right)$, and $y_k^*\left(\tau;\alpha\right)=\alpha^k y_k^*\left(\frac{\tau}{\alpha}\right)$, $k=1,2,3$.
\end{enumerate}
\end{proposition}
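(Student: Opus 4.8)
The claim is the standard dilation (homogeneity) symmetry of the triple integrator, so the plan is to write down the change of variables explicitly and check term by term that it carries feasible pairs to feasible pairs and multiplies the cost by $\alpha^4$. Fix $\alpha>0$ and set $v'(\tau)=v(\tau/\alpha)$, $y_k'(\tau)=\alpha^k y_k(\tau/\alpha)$ for $k=1,2,3$. The first step is to verify invariance of the dynamics: by the chain rule $\dot y_k'(\tau)=\alpha^{k-1}\dot y_k(\tau/\alpha)$, which for $k=2,3$ equals $\alpha^{k-1}y_{k-1}(\tau/\alpha)=y_{k-1}'(\tau)$ and for $k=1$ equals $v(\tau/\alpha)=v'(\tau)$; and since $\vy(0)=\ve_1$ forces $\vy'(0)=\alpha\ve_1$, the pair $(v',\vy')$ solves the dynamics of problem \eqref{eq:optimalproblem_n4s3_equivalent} started from $\alpha\ve_1$.

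The second step disposes of the constraints and the cost. Because $\alpha>0$, the time reparametrization $\tau\mapsto\tau/\alpha$ leaves the bound $\abs{v}\leq1$ unchanged, and $y_3'(\tau)=\alpha^3 y_3(\tau/\alpha)\geq0$ holds exactly when $y_3(\tau/\alpha)\geq0$; hence $(v',\vy')$ is feasible for the $\alpha\ve_1$-problem iff $(v,\vy)$ is feasible for the $\ve_1$-problem. The inverse map is the identical construction with $\alpha$ replaced by $1/\alpha$, which gives the converse implication and shows the correspondence is a bijection between the two feasible sets. For the objective, the substitution $\sigma=\tau/\alpha$ gives $\widehat{J}[v';\alpha]=\int_0^\infty\alpha^3 y_3(\tau/\alpha)\,\mathrm{d}\tau=\alpha^4\int_0^\infty y_3(\sigma)\,\mathrm{d}\sigma=\alpha^4\widehat{J}[v]$, which holds even when both sides equal $+\infty$. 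This proves Proposition~\ref{prop:n4s3_equivalent_recursive}.\ref{prop:n4s3_equivalent_recursive_a}.

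For Proposition~\ref{prop:n4s3_equivalent_recursive}.\ref{prop:n4s3_equivalent_recursive_a_optimal} I would take the infimum through the bijection: since it rescales every feasible cost by the fixed factor $\alpha^4$, it follows that $\widehat{J}^*(\alpha)=\alpha^4\widehat{J}^*$. The pointwise formulas for $v^*(\tau;\alpha)$ and $y_k^*(\tau;\alpha)$ then follow from uniqueness of the optimal control, Proposition~\ref{prop:n4s3_equivalent_costate}.\ref{prop:n4s3_equivalent_costate_unique}: the scaled image of the optimal pair for $\ve_1$ is both feasible and optimal for $\alpha\ve_1$, so it must coincide with $(v^*(\cdot;\alpha),\vy^*(\cdot;\alpha))$ (the control a.e., and the states everywhere by continuity). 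The whole argument is essentially routine; the only places needing care are stating ``feasible'' precisely enough that the bijection claim is literally true, and carrying the a.e.\ qualifier through the time change when invoking uniqueness. It is also worth recording here that this self-similarity is exactly what makes the hypothesis $\tau_\infty<\infty$ of Theorem~\ref{thm:Chattering_n4s3_equivalent} consistent and what seeds the infinite-switch (chattering) structure extracted later in Propositions~\ref{prop:n4s3_equivalent_recursive_final} and~\ref{prop:n4s3_equivalent_beta}.
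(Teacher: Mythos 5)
Your proposal is correct and follows essentially the same route as the paper: verify via the chain rule that the dilation preserves the dynamics, initial condition, and constraints, substitute to get the $\alpha^4$ cost scaling, pass the infimum through the scaling correspondence, and invoke the uniqueness of the optimal control (Proposition \ref{prop:n4s3_equivalent_costate}.\ref{prop:n4s3_equivalent_costate_unique}) to identify $v^*\left(\tau;\alpha\right)=v^*\left(\frac{\tau}{\alpha}\right)$ and the scaled states. No gaps worth flagging.
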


\begin{proof}
For Proposition \ref{prop:n4s3_equivalent_recursive}.\ref{prop:n4s3_equivalent_recursive_a}, assume that $v=v\left(\tau\right)$ with $y_k=y_k\left(\tau\right)$ is feasible under the initial state vector $\ve_1$. Let $v'=v\left(\frac{\tau}{\alpha}\right)$ and $y_k'=\alpha^k y_k\left(\frac{\tau}{\alpha}\right)$, $k=1,2,3$. Then, $\forall k=2,3$, $\dot{y}_k'=y_{k-1}'$, and $\dot{y}_1'=v'$. Evidently, $y_3'\geq0$ and $\abs{v'}\leq1$ hold. Therefore, $v'$ with $\vy'$ is feasible under the initial state vector $\alpha\ve_1$. Furthermore,
\begin{equation}
\begin{aligned}
&\widehat{J}\left[v';\alpha\right]=\int_{0}^{\infty}x_3'\left(\frac{\tau}{\alpha}\right)\,\mathrm{d}\tau=\int_{0}^{\infty}\alpha^3x_3\left(\frac{\tau}{\alpha}\right)\,\mathrm{d}\tau\\
=&\alpha^4\int_{0}^{\infty}x_3\left(\tau\right)\,\mathrm{d}\tau=\alpha^4\widehat{J}\left[v\right].
\end{aligned}\end{equation}
The necessity of Proposition \ref{prop:n4s3_equivalent_recursive}.\ref{prop:n4s3_equivalent_recursive_a} holds. Similarly, the sufficiency of Proposition \ref{prop:n4s3_equivalent_recursive}.\ref{prop:n4s3_equivalent_recursive_a} holds. So Proposition \ref{prop:n4s3_equivalent_recursive}.\ref{prop:n4s3_equivalent_recursive_a} holds.

Therefore, $\widehat{J}^*=\widehat{J}\left[v^*\left(\tau\right)\right]=\alpha^{-4}\widehat{J}\left[v^*\left(\frac{\tau}{\alpha}\right);\alpha\right]\leq \alpha^{-4}\widehat{J}^*\left(\alpha\right)$. Similarly, $\widehat{J}^*\left(\alpha\right)\leq \alpha^4\widehat{J}^*$, i.e., $\widehat{J}^*\left(\alpha\right)=\alpha^4\widehat{J}^*$. By Proposition \ref{prop:n4s3_equivalent_costate}.\ref{prop:n4s3_equivalent_costate_unique}, $v^*\left(\tau;\alpha\right)=v^*\left(\frac{\tau}{\alpha}\right)$ is the unique optimal control of problem \eqref{eq:optimalproblem_n4s3_equivalent} with the initial states $\alpha\ve_1$, corresponding to $y_k^*\left(\tau;\alpha\right)=\alpha^ky_k^*\left(\frac{\tau}{\alpha}\right)$. So Proposition \ref{prop:n4s3_equivalent_recursive}.\ref{prop:n4s3_equivalent_recursive_a_optimal} holds.
\end{proof}

For the optimal solution of problem \eqref{eq:optimalproblem_n4s3_equivalent}, denote $\tau_0=0$ and $\forall i\in\N$, $\tau_{i+1}=\arg\min\left\{\tau>\tau_i:y_3\left(\tau\right)=0\right\}$. Then, $\left\{\tau_i\right\}_{i=0}^\infty$ increases monotonically. Denote $\tau_\infty\triangleq\lim_{i\to\infty}\tau_i\in\overline{\R}_{++}$ and $\forall i\in\N$, $\vy_i\triangleq \vy\left(\tau_i\right)=\left(y_{i,1},0,0\right)$ where $y_{i,1}\geq0$. The optimal solution of problem \eqref{eq:optimalproblem_n4s3_equivalent} can be in the following forms. (a) $\exists N\in\N^*$, $y_{N,1}=0$, but $y_{N-1,1}>0$. In this case, $\vy\equiv0$ on $\left(\tau_N,\infty\right)$. In other words, $\forall i\geq N$, $y_{i,1}=0$ and $\tau_{i}=\tau_N$. (b) $\forall i\in\N$, $y_{i,1}>0$. In this case, if $\tau_\infty<\infty$, then a chattering phenomenon occurs. If $\tau_\infty=\infty$, then unconstrained arcs are connected by $y_3=0$ and extend to infinity. Based on the homogenous relationship in Proposition \ref{prop:n4s3_equivalent_recursive}, the existence of chattering in problem \eqref{eq:optimalproblem_n4s3_equivalent} is provided in Proposition \ref{prop:n4s3_equivalent_recursive_final}.

\begin{proposition}\label{prop:n4s3_equivalent_recursive_final}
For the optimal solution of problem \eqref{eq:optimalproblem_n4s3_equivalent}, $ \exists0<\alpha<1$ s.t. $\forall i\in\N$, $y_{i,1}= \alpha^i$. Furthermore, $p_0>0$, $\tau_{\infty}=\frac{\tau_1}{1- \alpha} <\infty$, and $\widehat{J}=\frac{\widehat{J}_1}{1- \alpha^4}$, where $\widehat{J}_1\triangleq\int_{0}^{\tau_1}x_3\left(\tau\right)\,\mathrm{d}\tau$.
\end{proposition}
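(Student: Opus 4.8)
The plan is to reduce everything to the behaviour of the optimal trajectory on the single arc $[0,\tau_1]$ and to the one number $\alpha\triangleq y_{1,1}$, using the scaling law of Proposition~\ref{prop:n4s3_equivalent_recursive} together with Bellman's principle of optimality (BPO). \emph{Step~1 (the first return $\tau_1$ is finite).} If instead $y_3>0$ on all of $(0,\infty)$, then $\zeta\equiv0$ there, so by \eqref{eq:dot_costate_n4s3_equivalent} $p_1$ is a polynomial of degree at most $3$ on $(0,\infty)$, and it is not identically zero (Proposition~\ref{prop:n4s3_equivalent_costate}.\ref{prop:n4s3_equivalent_costate_p1_0} would then force $y_3\equiv0$); hence $v=-\sgn(p_1)$ is eventually the constant $+1$ or $-1$ a.e. In the first case $y_3\to+\infty$, so $\widehat J=\infty$, contradicting $\widehat J^*<\infty$; in the second case $y_3\to-\infty$, violating $y_3\geq0$. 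Therefore $\tau_1<\infty$; since $y_3\geq0$ attains an interior minimum at $\tau_1$, automatically $y_2(\tau_1)=0$, i.e.\ $\vy(\tau_1)=\alpha\ve_1$ with $\alpha=y_{1,1}\geq0$.

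\emph{Step~2 ($\alpha>0$).} This is the heart of the argument. Suppose $\alpha=0$, so $\vy(\tau_1)=\vzero$; by BPO the continuation is optimal from $\vzero$, whose optimal value is $0$, hence $y_3\equiv0$ on $(\tau_1,\infty)$, and then $p_1\equiv0$ there by Proposition~\ref{prop:n4s3_equivalent_costate}.\ref{prop:n4s3_equivalent_costate_p1_0}. Continuity of $p_1$ and $p_2=-\dot p_1$ then forces $p_1(\tau_1)=p_2(\tau_1)=0$, so $\tau_1$ is at least a double root of the degree-$\leq 3$ polynomial $p_1$ on $(0,\tau_1)$; such a polynomial changes sign at most once on $(0,\tau_1)$, so $v$ switches at most once there. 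I would close with an elementary check that no admissible control with at most one switch steers $(1,0,0)$ to $(0,0,0)$: the two constant controls and the pattern $(+1,-1)$ fail $y_2(\tau_1)=0$ outright, while for $(-1,+1)$ the equations $y_1(\tau_1)=y_2(\tau_1)=0$ pin down the switch instant and $\tau_1$ uniquely, at which point $y_3(\tau_1)\neq0$ — a contradiction. Hence $\alpha>0$. The same continuity trick yields $p_0>0$: once $\alpha<1$ is established (Step~3), the trajectory sits at $\vzero$ for $\tau>\tau_\infty$, so $\vp\equiv\vzero$ on $(\tau_\infty,\infty)$, and the nontriviality condition $\left(p_0,\vp\right)\neq\vzero$ forces $p_0\neq0$.

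\emph{Step~3 (self-similarity, $\alpha<1$, and the closed forms).} With $\alpha>0$ fixed, BPO makes the trajectory after each $\tau_i$ optimal from $\vy(\tau_i)$; combined with the scaling and uniqueness in Proposition~\ref{prop:n4s3_equivalent_recursive}.\ref{prop:n4s3_equivalent_recursive_a_optimal}, induction gives $\vy(\tau_i)=\alpha^i\ve_1$ and $\tau_{i+1}-\tau_i=\alpha^i\tau_1$ for all $i\in\N$ (so we are in case~(b), never case~(a)). BPO also yields the value recursion $\widehat J^*=\widehat J_1+\widehat J^*(\alpha)=\widehat J_1+\alpha^4\widehat J^*$; since $\widehat J^*\in(0,\infty)$ (finite as noted after \eqref{eq:optimalproblem_n4s3_equivalent}, and positive because $y_3$ stays near $1$ for an initial time independent of the control) and $\widehat J_1=\int_0^{\tau_1}y_3\,\mathrm{d}\tau\in(0,\infty)$, this forces $1-\alpha^4=\widehat J_1/\widehat J^*\in(0,1]$, hence $\alpha\in(0,1)$ and $\widehat J=\widehat J^*=\widehat J_1/(1-\alpha^4)$. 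Finally $\tau_\infty=\lim_i\tau_i=\sum_{i\geq0}\alpha^i\tau_1=\tau_1/(1-\alpha)<\infty$, which also establishes the existence of chattering.

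\emph{Main obstacle.} Steps~1 and~3 are essentially bookkeeping with the scaling law; the genuine difficulty is Step~2. There appears to be no soft way to exclude $\alpha=0$: one really needs both the costate observation that hitting the origin forces a double zero of $p_1$ (hence very few switches) and the explicit verification that so few switches cannot realize the transfer $\ve_1\to\vzero$ while keeping $y_3\geq0$.
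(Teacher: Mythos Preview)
Your approach coincides with the paper's: rule out $\alpha=0$ via the costate double-zero of $p_1$ at $\tau_1$ (hence at most one switch on $(0,\tau_1)$, then explicit elimination of that case), derive self-similarity and $\alpha<1$ from BPO plus the scaling of Proposition~\ref{prop:n4s3_equivalent_recursive}, and obtain $p_0>0$ from $\vp\equiv\vzero$ beyond $\tau_\infty$. Your Step~1 ($\tau_1<\infty$) is actually more explicit than the paper's main proof, which takes this for granted here. One slip to fix: you justify $\widehat J^*>0$ by saying ``$y_3$ stays near $1$'', but $\vy_0=(1,0,0)$ means $y_3(0)=0$ (it is $y_1$ that starts at $1$); positivity follows instead from $y_3>0$ on $(0,\tau_1)$, so $\widehat J_1>0$.
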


\begin{proof}
Let $\alpha\triangleq y_{1,1}\geq0$. Assume that $ \alpha=0$. In other words, $y_3^*\left(\tau\right)>0$ on $\left(\tau_0,\tau_1\right)$, and $\vy\left(\tau\right)\equiv\vzero$ on $\tau\geq\tau_1$. According to Proposition \ref{prop:n4s3_equivalent_costate}.\ref{prop:n4s3_equivalent_costate_p1_0}, $\vp\equiv0$ for $\tau\in\left(\tau_1,\infty\right)$. The continuity of $p_1$ and $p_2$ implies that $p_1\left(\tau_1\right)=0$ and $ p_2\left(\tau_1\right)=-\dot{p}_1\left(\tau_1\right)=0$. By Proposition \ref{prop:n4s3_equivalent_costate}.\ref{prop:n4s3_equivalent_costate_polynomial}, $p_1$ has at most one root on $\left(0,\tau_1\right)$; hence, $v$ switches for at most one time on $\left(\tau_0,\tau_1\right)$. Assume that $v\left(\tau\right)=v_0$ for $\tau_0<\tau<\tau'$, and $v\left(\tau\right)=-v_0$ for $\tau'<\tau<\tau_1$, where $v_0\in\left\{\pm1\right\}$ and $\tau_0<\tau'\leq\tau_1$. Then,
\begin{equation}\label{eq:Chattering_n4s3_equivalent_recursive_y11is0}
\begin{dcases}
1+v_0\left(\left(\tau_1-\tau_0\right)-2\left(\tau_1-\tau'\right)\right)=0,\\
\left(\tau_1-\tau_0\right)+\frac{v_0}{2}\left(\left(\tau_1-\tau_0\right)^2-2\left(\tau_1-\tau'\right)^2\right)=0,\\
\frac12\left(\tau_1-\tau_0\right)^2+\frac{v_0}{6}\left(\left(\tau_1-\tau_0\right)^3-2\left(\tau_1-\tau'\right)^3\right)=0.
\end{dcases}\end{equation}
However, Eq. \eqref{eq:Chattering_n4s3_equivalent_recursive_y11is0} has no feasible solution. Therefore, $ \alpha>0$.

By BPO and Proposition \ref{prop:n4s3_equivalent_recursive}, $\forall\tau\geq0$, $\vy\left(\tau_1+\tau\right)=\vy\left(\tau;\alpha\right)$. In other words, $\forall i\in\N$, $y_{i,1}= \alpha y_{i-1,1}$, i.e., $y_{i,1}= \alpha^i$. Since $\widehat{J}=\widehat{J}^*\left( \alpha\right)+\widehat{J}_1$, it holds that $\widehat{J}=\frac{\widehat{J}_1}{1- \alpha^4}$. Therefore, $\widehat{J}$ implies that $0<\alpha<1$. Note that $\tau_i-\tau_{i-1}=y_{i,1}\tau_1$; hence, $\tau_i=\frac{1- \alpha^i}{1- \alpha}\tau_1$ and $\tau_\infty=\lim_{i\to\infty}\tau_i=\frac{\tau_1}{1- \alpha}$ hold.

To achieve optimality, $\vy\equiv\vzero$ during $\left(\tau_\infty,\infty\right)$. By Proposition \ref{prop:n4s3_equivalent_costate}.\ref{prop:n4s3_equivalent_costate_p1_0}, $\vp\equiv\vzero$ during $\left(\tau_\infty,\infty\right)$. Specifically, $p_0>0$ since $\left(p_0,\vp\right)\not=\vzero$. Therefore, Proposition \ref{prop:n4s3_equivalent_recursive_final} holds.

\end{proof}

Proposition \ref{prop:n4s3_equivalent_recursive_final} proves the existence of chattering in the optimal solution of problem \eqref{eq:optimalproblem_n4s3_equivalent}, where $0<\alpha<1$ is defined as the \textbf{chattering attenuation rate}. To solve $\alpha$, Proposition \ref{prop:n4s3_equivalent_beta} investigates the optimal control between $\left(\tau_{i-1},\tau_i\right)$.

\begin{proposition}\label{prop:n4s3_equivalent_beta}
$\forall i\in\N^*$, $v$ switches for 2 times on $\left(\tau_{i-1},\tau_i\right)$.

\end{proposition}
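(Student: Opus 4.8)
The plan is to determine the number $m$ of switches of $v$ on an arc $\left(\tau_{i-1},\tau_i\right)$ by combining the polynomial structure of the costate on an unconstrained arc with the continuity of $p_1$ across junctions. By the homogeneity of Proposition \ref{prop:n4s3_equivalent_recursive} together with BPO, every arc is a time-rescaled copy of $\left(\tau_0,\tau_1\right)$, so $m$ is the same for all $i$ and it suffices to work on $\left(\tau_0,\tau_1\right)$, where $\vy\left(\tau_0\right)=\ve_1$ and $\vy\left(\tau_1\right)=\alpha\ve_1$ with $\alpha\in\left(0,1\right)$ and $p_0>0$ by Proposition \ref{prop:n4s3_equivalent_recursive_final}. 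On this arc $y_3>0$, hence $\zeta\equiv0$, so by \eqref{eq:dot_costate_n4s3_equivalent} one has $\dddot p_1=\dot p_3=-p_0<0$; thus $p_1$ is a cubic with negative leading coefficient and $v=-\sgn\left(p_1\right)$ switches at most $3$ times, which is Proposition \ref{prop:n4s3_equivalent_costate}.\ref{prop:n4s3_equivalent_costate_polynomial}. It remains to exclude $m\in\left\{0,1,3\right\}$.

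For $m=0$ and $m=1$ I would argue by direct computation with Proposition \ref{prop:system_dynamics}, using the endpoint data $y_1\left(\tau_0\right)=1$, $y_2\left(\tau_0\right)=y_3\left(\tau_0\right)=0$ and $y_2\left(\tau_1\right)=y_3\left(\tau_1\right)=0$, which yield two polynomial equations in the at most two free switching times. If $m=0$, then $y_2\left(\tau_1\right)=0$ forces $v\equiv-1$ and $\tau_1-\tau_0=2$, whence $y_3\left(\tau_1\right)=\tfrac23\neq0$, a contradiction. If $m=1$, eliminating one parameter reduces the system to a cubic in the ratio of the two subintervals whose only admissible positive root yields $y_1\left(\tau_1\right)=1$, i.e. $\alpha=1$; this contradicts Proposition \ref{prop:n4s3_equivalent_recursive_final} (it would make $\tau_\infty=\infty$ and $\widehat{J}=\infty$). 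Hence $m\ge2$.

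To exclude $m=3$ I would first show $p_1\left(\tau_i\right)\neq0$ for every $i$. Evaluating the Hamiltonian identity \eqref{eq:hamiltonian_constant_n4s3_equivalent} at $\tau_i$, where $y_2=y_3=0$ and $y_1\left(\tau_i\right)=y_{i,1}>0$, gives $y_{i,1}p_2\left(\tau_i\right)=\abs{p_1\left(\tau_i\right)}$; so $p_1\left(\tau_i\right)=0$ would force $p_2\left(\tau_i\right)=0$ and hence, since $p_2=-\dot p_1$ is continuous at junctions by \eqref{eq:Chattering_n4s3_equivalent_junction}, $\dot p_1\left(\tau_i\right)=0$; then the cubic $p_1$ on the adjacent arc $\left(\tau_i,\tau_{i+1}\right)$ has a double root at $\tau_i$, so it changes sign at most once there, and by the homogeneity every arc has at most one switch, contradicting $m\ge2$. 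Now $p_1$ is continuous across junctions and nonzero at every $\tau_i$, so $p_1$ changes sign an even number of times inside an arc exactly when $\sgn\left(p_1\left(\tau_{i-1}\right)\right)=\sgn\left(p_1\left(\tau_i\right)\right)$. But a cubic with negative leading coefficient having exactly $3$ sign changes in $\left(\tau_{i-1},\tau_i\right)$ has three simple roots strictly inside, so it is positive just right of $\tau_{i-1}$ and negative just left of $\tau_i$; by continuity $p_1\left(\tau_i^+\right)<0$, whereas the same description applied to the similar arc $\left(\tau_i,\tau_{i+1}\right)$ forces $p_1\left(\tau_i^+\right)>0$, a contradiction. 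Hence $m$ is even, and with $0\le m\le3$ and $m\neq0$ we conclude $m=2$ on every arc.

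The main obstacle I anticipate is the degenerate junction case $p_1\left(\tau_i\right)=0$: excluding it cleanly requires the transversality-type identity $\widehat{\hamilton}\equiv0$, the junction condition \eqref{eq:Chattering_n4s3_equivalent_junction}, and the self-similarity of the costate (not only of the state) across arcs, which itself must be justified via BPO together with the uniqueness of the optimal control (Proposition \ref{prop:n4s3_equivalent_costate}.\ref{prop:n4s3_equivalent_costate_unique}) and the polynomial form of $p_1$ on each arc. A lesser technical point is to track the multiplicities of the roots of $p_1$ at the arc endpoints when invoking the ``at most $3$ sign changes'' bound, so that the even/odd counting argument remains valid; this is where having first established $m\ge2$ and $p_1\left(\tau_i\right)\neq0$ is essential.
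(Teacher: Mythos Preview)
Your proof is correct and follows essentially the same strategy as the paper: bound the number of switches by $3$ via the cubic structure of $p_1$, rule out $m\le1$ by direct computation on the boundary data (the paper obtains the same contradiction $\alpha=1$), and rule out $m=3$ by a sign conflict of $p_1$ at the junction $\tau_i$ arising from self-similarity. The one difference is your detour through the Hamiltonian identity to establish $p_1(\tau_i)\neq0$: this is unnecessary, because once $m=3$ the three simple roots of the cubic already lie strictly inside $(\tau_{i-1},\tau_i)$, so the cubic is automatically nonzero at the endpoints with opposite signs at consecutive junctions; the paper packages this more directly by noting that continuity would then force $p_1(\tau_i)=0$, giving the cubic five roots on $[\tau_{i-1},\tau_i]$.
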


\begin{proof}
By Proposition \ref{prop:n4s3_equivalent_costate}, $v$ switches for at most 3 times on $\left(\tau_{i-1},\tau_i\right)$. Assume that $v$ switches for 3 times on $\left(\tau_0,\tau_1\right)$. Denote the switching time as $\beta_k'\tau_1$, $k=1,2,3$, and $0<\beta_1'<\beta_2'<\beta_3'<1$. By Proposition \ref{prop:n4s3_equivalent_recursive}, $\forall i\in\N^*$, $v$ switches for 3 times on $\left(\tau_{i-1},\tau_i\right)$, where the switching time is $\tau_{i-1}+\beta_k'\left(\tau_i-\tau_{i-1}\right)$, $k=1,2,3$. According to \eqref{eq:Chattering_n4s3_equivalent_BangBang} and \eqref{eq:dot_costate_n4s3_equivalent}, $\forall i\in\N^*$, $\tau\in\left(\tau_{i-1},\tau_i\right)$, $p_1\left(\tau\right)=-\frac{p_0}{6}\prod_{k=1}^{3}\left(\tau-\tau_{i-1}-\beta_k'\left(\tau_i-\tau_{i-1}\right)\right)$. Then, $p_0>0$ implies that $p_1\left(\tau_i^+\right)<0<p_1\left(\tau_i^-\right)$. Since $p_1$ is continuous, $p_1\left(\tau_i\right)=0$. Hence, $p_1\left(\tau_{i-1}\right)=0$. In other words, $p_1$ has at least 5 roots on $\left[\tau_{i-1},\tau_i\right]$, which contradicts Proposition \ref{prop:n4s3_equivalent_costate}.\ref{prop:n4s3_equivalent_costate_polynomial}. Therefore, $v$ switches for at most 2 times on $\left(\tau_{i-1},\tau_i\right)$.

Assume that $v$ switches for at most one time on $\left(\tau_0,\tau_1\right)$. Then, $\exists 0<\tau'\leq\tau_1$, s.t. $v=v_0$ on $\left(0,\tau_1-\tau'\right)$ and $v=-v_0$ on $\left(\tau_1-\tau',\tau_1\right)$, where $v_0\in\left\{\pm1\right\}$. Then, it holds that
\begin{equation}\label{eq:Chattering_n4s3_equivalent_switch1}
\begin{dcases}
1+v_0\left(\tau_1-2\tau'\right)=\alpha,\\
\tau_1+\frac{v_0}{2}\left(\tau_1^2-2\tau'^2\right)=0,\\
\frac12\tau_1^2+\frac{v_0}{6}\left(\tau_1^3-2\tau'^3\right)=0.
\end{dcases}\end{equation}
Eq. \eqref{eq:Chattering_n4s3_equivalent_switch1} implies $\tau_1=\tau'=0$, $\alpha=1$, which contradicts Proposition \ref{prop:n4s3_equivalent_recursive_final}. So $v$ switches for 2 times on $\left(\tau_{i-1},\tau_i\right)$.
\end{proof}

Finally, Theorem \ref{thm:Chattering_n4s3_equivalent_optimal_y11} is proved as follows.

\begin{proof}[Proof of Theorem \ref{thm:Chattering_n4s3_equivalent_optimal_y11}]
According to Proposition \ref{prop:n4s3_equivalent_beta}, $\exists0<\beta_1<\beta_2<1$, s.t. $\forall i\in\N^*$, $v$ switches at $\left(\left(1-\beta_k\right)\tau_{i-1}+\beta_k\tau_i\right)$, $k=1,2$. By Proposition \ref{prop:n4s3_equivalent_costate}.\ref{prop:n4s3_equivalent_costate_polynomial}, $p_1$ is a 3rd-order polynomial. By \eqref{eq:dot_costate_n4s3_equivalent}, $\forall i\in\N^*$, $\exists\beta_3^{\left(i\right)}\not\in\left(0,1\right)$, s.t. $\forall\tau\in\left(\tau_{i-1},\tau_i\right)$, $p_1$ is
\begin{equation}\label{eq:Chattering_n4s3_equivalent_optimal_p1i}
p_{1,i}\left(\tau\right)=-\frac{p_0}{6}\prod_{k=1}^{3}\left(\tau-\left(1-\beta_k^{\left(i\right)}\right)\tau_{i-1}-\beta_k^{\left(i\right)}\tau_i\right),\end{equation}
where $\beta_1^{\left(i\right)}=\beta_1$ and $\beta_2^{\left(i\right)}=\beta_2$. Denote $\mu_i\triangleq p_3\left(\tau_i^+\right)-p_3\left(\tau_i^-\right)\geq0$. By \eqref{eq:dot_costate_n4s3_equivalent}, $\forall\tau\in\left(\tau_i,\tau_{i+1}\right)$,
\begin{equation}\label{eq:Chattering_n4s3_equivalent_optimal_p1i1_p1i}
p_{1,i+1}\left(\tau\right)-p_{1,i}\left(\tau\right)=\frac{\mu_i}{2}\left(\tau-\tau_{i-1}\right)^2.\end{equation}
Compare the coefficients of $1$ and $\tau$ in \eqref{eq:Chattering_n4s3_equivalent_optimal_p1i1_p1i}, it holds that
\begin{equation}\label{eq:Chattering_n4s3_equivalent_optimal_beta3i1beta3i}
\begin{dcases}
2\sum_{k=1}^{3}\beta_k^{\left(i\right)}+\alpha^2\sum_{j<k}\beta_j^{\left(i+1\right)}\beta_k^{\left(i+1\right)}-\sum_{j<k}\beta_j^{\left(i\right)}\beta_k^{\left(i\right)}=3,\\
\sum_{k=1}^{3}\beta_k^{\left(i\right)}-\sum_{j<k}\beta_j^{\left(i\right)}\beta_k^{\left(i\right)}-\beta_1\beta_2\left(\beta_3^{\left(i\right)}-\alpha^3\beta_3^{\left(i+1\right)}\right)=1.
\end{dcases}\end{equation}
Eliminate $\beta_3^{\left(i+1\right)}$ in \eqref{eq:Chattering_n4s3_equivalent_optimal_beta3i1beta3i}. $\forall i\in\N^*$, $\beta_3^{\left(i\right)}=\frac{f_2\left(\beta_1,\beta_2,\alpha\right)}{f_1\left(\beta_1,\beta_2,\alpha\right)}$, where $f_1\left(\beta_1,\beta_2,\alpha\right)=\sum_{k=1}^{2}\beta_k\left(1-\beta_k\right)\left(1-\beta_{3-k}\left(1-\alpha\right)\right)>0$. Therefore, $\beta_3^{\left(i\right)}$ is independent of $i$. Denote $\beta_3^{\left(i\right)}=\beta_3$, $\forall i\in\N^*$. Then, Eq. \eqref{eq:Chattering_n4s3_equivalent_optimal_beta3i1beta3i} implies \eqref{eq:Chattering_n4s3_equivalent_optimal_y11_equationsystem_coeff1} and \eqref{eq:Chattering_n4s3_equivalent_optimal_y11_equationsystem_coeff0}.

According to Proposition \ref{prop:n4s3_equivalent_costate}.\ref{prop:n4s3_equivalent_BangBang}, Eq. \eqref{eq:Chattering_n4s3_equivalent_optimal_p1i} implies that $\exists v_0\in\left\{\pm1\right\}$, s.t. $\forall i\in\N^*$, $\beta\in\left(0,1\right)$, the optimal control is
\begin{equation}
v\left(\left(1-\beta\right)\tau_{i-1}+\beta\tau_i\right)=\begin{dcases}
v_0,&\beta\in\left(0,\beta_1\right),\\
-v_0,&\beta\in\left(\beta_1,\beta_2\right),\\
v_0,&\beta\in\left(\beta_2,1\right).
\end{dcases}\end{equation}

Note that $\vy\left(0\right)=\ve_{1}$ and $\vy\left(\tau_1\right)=\alpha\ve_{1}$; hence, it holds that
\begin{equation}\label{eq:Chattering_n4s3_equivalent_optimal_y0y1}
\begin{dcases}
1+v_0\left(1-2\left(1-\beta_1\right)+2\left(1-\beta_2\right)\right)\tau_1=\alpha,\\
\tau_1+\frac{v_0}{2}\left(1-2\left(1-\beta_1\right)^2+2\left(1-\beta_2\right)^2\right)\tau_1^2=0,\\
\frac{\tau_1^2}{2}+\frac{v_0}{6}\left(1-2\left(1-\beta_1\right)^3+2\left(1-\beta_2\right)^3\right)\tau_1^3=0.
\end{dcases}\end{equation}
Eliminate $v_0$ and $\tau_1$ in \eqref{eq:Chattering_n4s3_equivalent_optimal_y0y1}. Then, it holds that
\begin{equation}\label{eq:Chattering_n4s3_equivalent_optimal_y11_equationsystem_y1y2y3_notau}
\begin{dcases}
4\beta_1^3-6\beta_1^2-4\beta_2^3+6\beta_2^2-1=0,\\
\left(2\beta_1^2-2\beta_2^2+1\right)\left(\alpha-1\right)-\left(4\beta_1-4\beta_2+2\right)\alpha=0.
\end{dcases}\end{equation}
Through solving \eqref{eq:Chattering_n4s3_equivalent_optimal_y11_equationsystem_coeff1}, \eqref{eq:Chattering_n4s3_equivalent_optimal_y11_equationsystem_coeff0}, and \eqref{eq:Chattering_n4s3_equivalent_optimal_y11_equationsystem_y1y2y3_notau}, the unique feasible solution for $\left(\alpha,\beta_1,\beta_2,\beta_3\right)$ is obtained, as shown in \eqref{eq:Chattering_n4s3_equivalent_optimal_solution}. Then, $v_0=-\sgn\left(p_1\left(0^+\right)\right)=-1$. Therefore, Eq. \eqref{eq:Chattering_n4s3_equivalent_optimal_y0y1} implies \eqref{eq:Chattering_n4s3_equivalent_optimal_y11_equationsystem_x1}, \eqref{eq:Chattering_n4s3_equivalent_optimal_y11_equationsystem_x2}, and \eqref{eq:Chattering_n4s3_equivalent_optimal_y11_equationsystem_x3}. The solution for $\tau_1$ can be solved by \eqref{eq:Chattering_n4s3_equivalent_optimal_y11_equationsystem_x1} and the value of $\alpha$, $\beta_1$, $\beta_2$, and $\beta_3$ in \eqref{eq:Chattering_n4s3_equivalent_optimal_solution}. $\tau_\infty$ can be solved by Proposition \ref{prop:n4s3_equivalent_recursive_final}. Furthermore, it can be solved that $\mu_i\approx1.4494594p_0\alpha^{3i-3}>0$. Hence, Theorem \ref{thm:Chattering_n4s3_equivalent_optimal_y11} holds.
\end{proof}

Theorem \ref{thm:Chattering_n4s3_equivalent_optimal_y11} provides a fully analytical expression for the optimal solution of problem \eqref{eq:optimalproblem_n4s3_equivalent}, as shown in Fig. \ref{fig:chattering_n4s3_optimal_equivalent}. In Fig. \ref{fig:chattering_n4s3_optimal_equivalent}(a-b), the state vector $\vy^*$, the costate vector $\vp^*$, and the control $v^*$ chatter with limit time $\tau_\infty^*$. To further examine the trajectory approaching $\tau_\infty^*$, the time axes in Fig. \ref{fig:chattering_n4s3_optimal_equivalent}(c-d) are in logarithmic scales, while the amplitudes of $\vy$ and $\vp$ are multiplied by some certain compensation factors. Then, it can be observed from Fig. \ref{fig:chattering_n4s3_optimal_equivalent}(c-d) that $\vy$, $v$, and $\vp$ all exhibit strict periodicity, which can be reasoned by Proposition \ref{prop:n4s3_equivalent_recursive_final} and \eqref{eq:Chattering_n4s3_equivalent_optimal_p1}, respectively.

\begin{figure}[!t]
\centering
\includegraphics[width=\columnwidth]{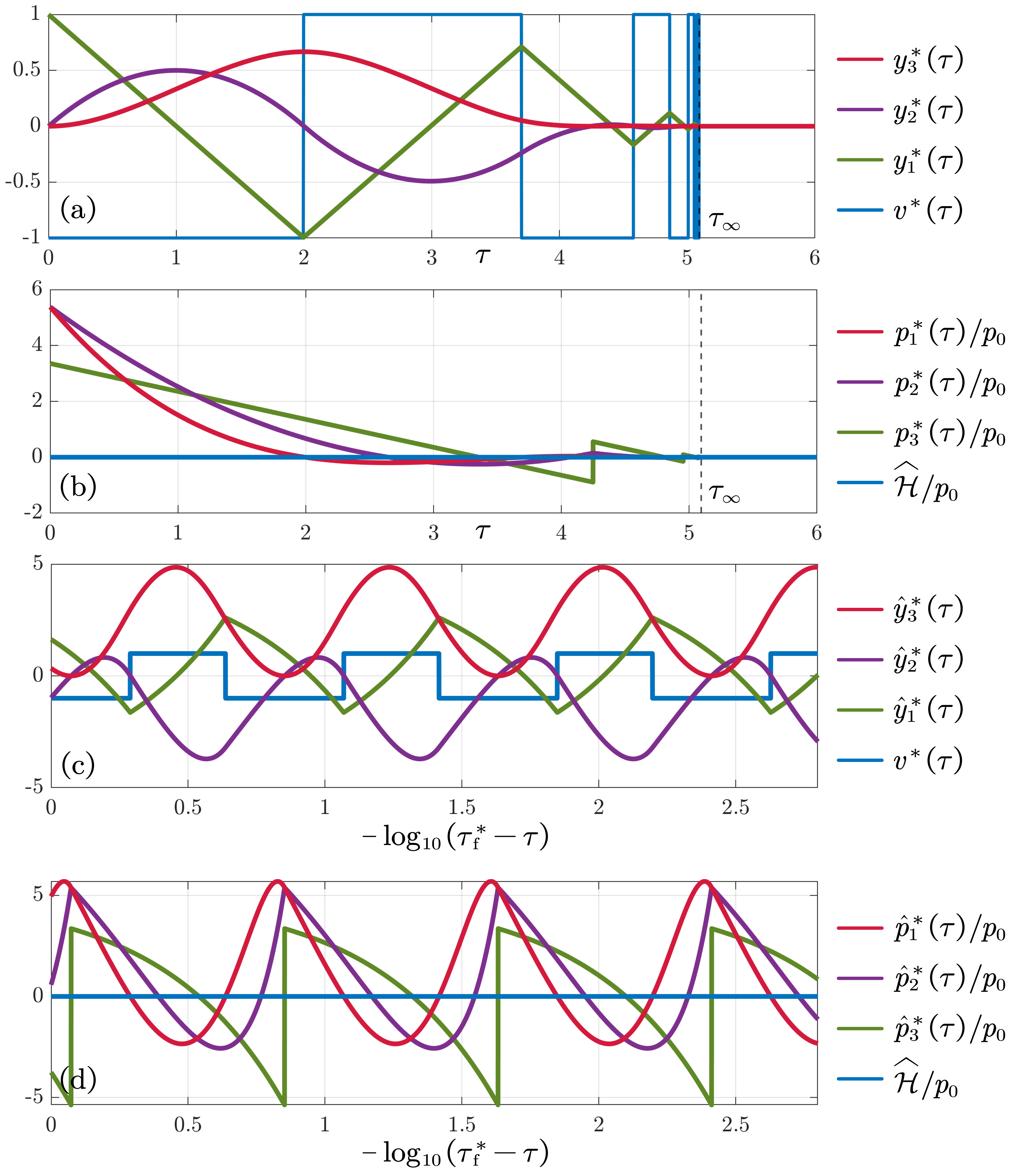}
\caption{Optimal solution of problem \eqref{eq:optimalproblem_n4s3_equivalent}. (a) The optimal trajectory $\vy^*\left(\tau\right)$ and the optimal control $v^*\left(\tau\right)$. (b) The optimal costate vector $\frac{1}{p_0}\vp^*\left(\tau\right)$. (c-d) Enlargement of (a-b) during the chattering period. The abscissa is in logarithmic scale with respect to time, i.e., $-\log_{10}\left(\tau_{\infty}^*-\tau\right)$. $\forall k=1,2,3$, $\hat{y}_k^*\left(\tau\right)=y_k^*\left(\tau\right)\left(1-\frac{\tau}{\tau_{\infty}^*}\right)^{-k}$, and $\hat{p}_k^*\left(\tau\right)=p_k^*\left(\tau\right)\left(1-\frac{\tau}{\tau_{\infty}^*}\right)^{k-4}$.}
\label{fig:chattering_n4s3_optimal_equivalent}
\end{figure}

\begin{remark}
The optimality of the solved $\alpha^*$ in \eqref{eq:Chattering_n4s3_equivalent_optimal_solution} can be verified in another way. $\forall0\leq\alpha<1$, let $y_{i,1}=\alpha^i$, and solve the control $v$ by \eqref{eq:Chattering_n4s3_equivalent_optimal_y0y1}, where $\vy$ reaches $\alpha\ve_1$ at $\tau_1$. Then, the trajectory has a similar homogenous structure to Proposition \ref{prop:n4s3_equivalent_recursive}. Denote $\widehat{J}_1\left(\alpha\right)=\int_{0}^{\tau_1}y_3\left(\tau\right)\,\mathrm{d}\tau$. Then, $\widehat{J}\left(\alpha\right)=\int_{0}^{\infty}y_3\left(\tau\right)\,\mathrm{d}\tau=\frac{\widehat{J}_1\left(\alpha\right)}{1-\alpha^4}$. As shown in Fig. \ref{fig:chattering_Jalpha_equivalent}, $\alpha^*$ in \eqref{eq:Chattering_n4s3_equivalent_optimal_solution} achieves a minimal cost $\widehat{J}\left(\alpha ^*\right)$. The minimal cost supports the optimality of the reasoned $\alpha^*$ once again.

Our previous work \cite{wang2024time} proposes a greedy-and-conservative suboptimal method called MIM. If MIM is applied to problem \eqref{eq:optimalproblem_n4s3}, the corresponding $\vy$ in problem \eqref{eq:optimalproblem_n4s3_equivalent} first moves to $\vzero$ as fast as possible, and then moves along $\vy\equiv\vzero$. In other words, MIM achieves a cost of $\widehat{J}\left(0\right)$ in problem \eqref{eq:optimalproblem_n4s3_equivalent}. Specifically,
\begin{equation}\label{eq:Chattering_n4s3_equivalent_optimal_Jalpha}
\widehat{J}^*=\widehat{J}\left(\alpha^*\right)\approx1.3452202,\,\widehat{J}\left(0\right)\approx1.3467626.\end{equation}
Hence, the relative error between the two trajectoryies is $\frac{\widehat{J}\left(0\right)-\widehat{J}^*}{\widehat{J}^*}\approx0.11\%$. It is the minute discrepancy that leads to the longstanding oversight of the chattering phenomenon in problem \eqref{eq:optimalproblem}, despite its universal applications in the industry.
\end{remark}

\begin{figure}[!t]
\centering
\includegraphics[width=\columnwidth]{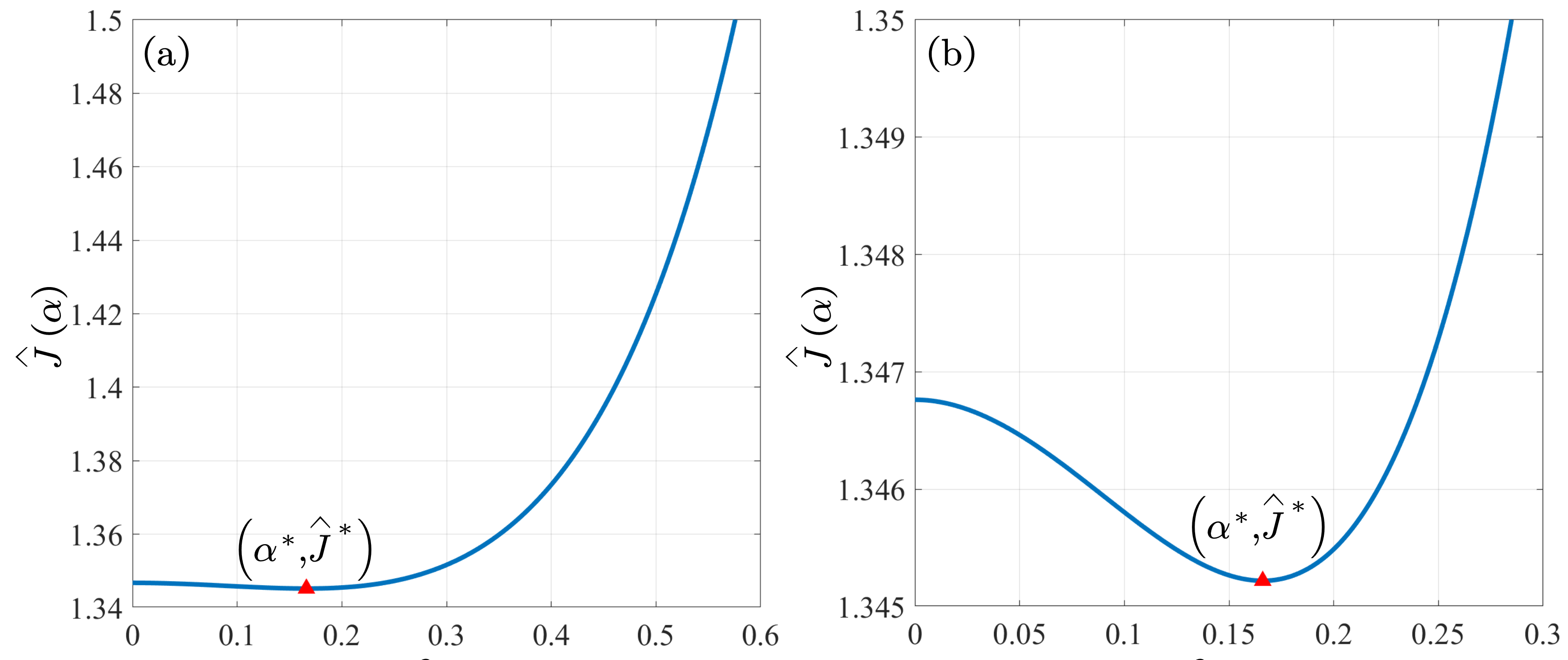}
\caption{Loss function $\widehat{J}\left(\alpha\right)$ when choosing different chattering attenuation rate $\alpha$ in problem \eqref{eq:optimalproblem_n4s3_equivalent}. (a) and (b) are in different scales.}
\label{fig:chattering_Jalpha_equivalent}
\end{figure}

\subsection{Optimal Solution of Problem \eqref{eq:optimalproblem_n4s3}}\label{subsec:n4s3_optimal_xu}

Consider problem \eqref{eq:optimalproblem_n4s3}. By BPO, if $\exists \hat{t}\in\left[0,\tf\right]$, $\vx\left(\hat{t}\right)=\left(0,0,M_3,x_{4}\left(\hat{t}\right)\right)$ and $x_{4}\left(\hat{t}\right)\leq x_{4}\left(\tf\right)$, then $\forall t\in\left[\hat{t},\tf\right]$, $x_3\equiv M_3$. Therefore, the performance of a trajectory depends on the part before $\vx$ enters $\left\{x_3\equiv M_3\right\}$. From this inspiration, this section solves problem \eqref{eq:optimalproblem_n4s3} through proving Theorem \ref{thm:Chattering_n4s3_equivalent}.

\begin{proof}[Proof of Theorem \ref{thm:Chattering_n4s3_equivalent}]
Denote the optimal solution of problem \eqref{eq:optimalproblem_n4s3} as $\vx^*\left(t\right)$ and $u^*\left(t\right)$, $t\in\left[0,t_\f^*\right]$. Denote the solution induced by \eqref{eq:optimalsolution_equivalent_n4s3} as $\hat\vx\left(t\right)$ and $\hat{u}\left(t\right)$, $t\in\left[0,\hat{t}_\f\right]$. By \eqref{eq:optimalsolution_equivalent_n4s3_condition}, \eqref{eq:optimalsolution_equivalent_n4s3_tf} and \eqref{eq:optimalsolution_equivalent_n4s3_x4}, it holds that $\hat\vx\left(\hat{t}_\f\right)=\vxf$. Evidently, $\hat\vx\left(t\right)$ and $\hat{u}\left(t\right)$ are feasible in problem \eqref{eq:optimalproblem_n4s3}. Hence, $t_\f^*\leq\hat{t}_\f$ holds. Let

\begin{equation}\label{eq:optimalsolution_equivalent_n4s3_t2tau}
\hat{y}_3\left(\tau\right)=\begin{dcases}
\frac{M_0^2}{x_{0,1}^3}\left(x_3^*\left(-\frac{x_{0,1}}{M_0}\tau\right)-M_3\right),&\tau\leq-\frac{M_0}{x_{0,1}}t_\f^*,\\
0,&\tau>-\frac{M_0}{x_{0,1}}t_\f^*.
\end{dcases}\end{equation}
Then, the trajectory $\hat\vy\left(\tau\right)$ represented by $\hat{y}_3\left(\tau\right)$ is a feasible solution of problem \eqref{eq:optimalproblem_n4s3_equivalent}. Note that $\int_{0}^{\infty}\hat{y}_3\left(\tau\right)\mathrm{d}\tau=\frac{M_0^3}{x_{0,1}^4}\int_{0}^{t_\f^*}\left(M_3-x_3^*\left(t\right)\right)\mathrm{d}t=M_3t_\f^*-x_{\f4}+x_{0,4}$. Similarly, $\int_{0}^{\infty}y_3^*\left(\tau\right)\mathrm{d}\tau=M_3\hat{t}_\f-x_{\f4}+x_{0,4}$. Then, $t_\f^*\geq\hat{t}_\f$ can be reasoned by $\int_{0}^{\infty}\hat{y}_3\left(\tau\right)\mathrm{d}\tau\geq\int_{0}^{\infty}y_3^*\left(\tau\right)\mathrm{d}\tau$. Therefore, $t_\f^*=\hat{t}_\f$.

By Lemma \ref{lemma:costate}.\ref{lemma:costate_uniqueoptimalcontrol}, $\vx^*\left(t\right)=\hat{\vx}\left(t\right)$ and $u^*\left(t\right)=\hat{u}\left(t\right)$ a.e. Therefore, the solution \eqref{eq:optimalsolution_equivalent_n4s3} is optimal in problem \eqref{eq:optimalproblem_n4s3}.

\end{proof}

\begin{remark}
Theorems \ref{thm:Chattering_n4s3_equivalent} and \ref{thm:Chattering_n4s3_equivalent_optimal_y11} provides the optimal solution of problem \eqref{eq:optimalproblem_n4s3}; hence, Theorems \ref{thm:Chattering_n4s3_optimal_allcases}.\ref{thm:Chattering_n4s3_optimal_allcases_n4s3} and \ref{thm:Chattering_n4s3_optimal_solution} are proved.
\end{remark}

Two feasible solutions for problem \eqref{eq:optimalproblem_n4s3} are compared. The first one is the optimal trajectory with chattering, where
\begin{equation}
t_\infty^*\approx5.0938\frac{\abs{x_{0,1}}}{M_0},\,t_\f^*\approx\frac{x_{\f4}-x_{0,4}}{M_3}+1.3452\frac{x_{0,1}^4}{M_0^3M_3}.\end{equation}
The second one is the MIM-trajectory \cite{wang2024time}. Let $\vx$ moves from $\vx_0$ to $\left\{x_3\equiv M_3\right\}$ as fast as possible. Then,
\begin{equation}
\hat{t}_\infty\approx4.3903\frac{\abs{x_{0,1}}}{M_0},\,\hat{t}_\f\approx\frac{x_{\f4}-x_{0,4}}{M_3}+1.3468\frac{x_{0,1}^4}{M_0^3M_3},\end{equation}
where $\vx_{1:3}$ reaches $M_3\ve_3$ at $\hat{t}_\infty$ and $\vx$ reaches $\vx_\f$ at $\hat{t}_\f$. The MIM-trajectory reaches the maximum speed stage for $t_\infty^*-\hat{t}_\infty\approx0.1424\frac{\abs{x_{0,1}}}{M_0}$ earlier than the optimal trajectory. However, the MIM-trajectory arrives at $\vx_\f$ for $\hat{t}_\f-t_\f^*\approx\left(1.5425\times10^{-3}\right)\frac{x_{0,1}^4}{M_0^3M_3}$ later than the optimal trajectory.

\subsection{Discussions}\label{subsec:n4s3_Discussion}

\subsubsection{Comparison Among Fuller's Problem, Robbins' Problem, and Problem \eqref{eq:optimalproblem_n4s3_equivalent}}

Fuller's problem \cite{fuller1963study} and Robbins' problem \cite{robbins1980junction} represent classical optimal control problems with chattering. In the three problems, chattering occurs because a nonsingular arc cannot directly connect a singular arc with $\vzero$ states. Specifically, singular arcs in both Robbins problem and problem \eqref{eq:optimalproblem_n4s3_equivalent} exist due to high-order state constraints.

The three problems all apply the homogeneity to solve the control, where a unique chattering mode is obtained in every problem to solve the corresponding optimal control. The homogeneity structures of Fuller's problem and problem \eqref{eq:optimalproblem_n4s3_equivalent} are similar due to the similar control constraints. It is the homogeneity that contributes to the computation of chattering trajectories in the three problems.

\begin{figure}[!t]
\centering
\includegraphics[width=\columnwidth]{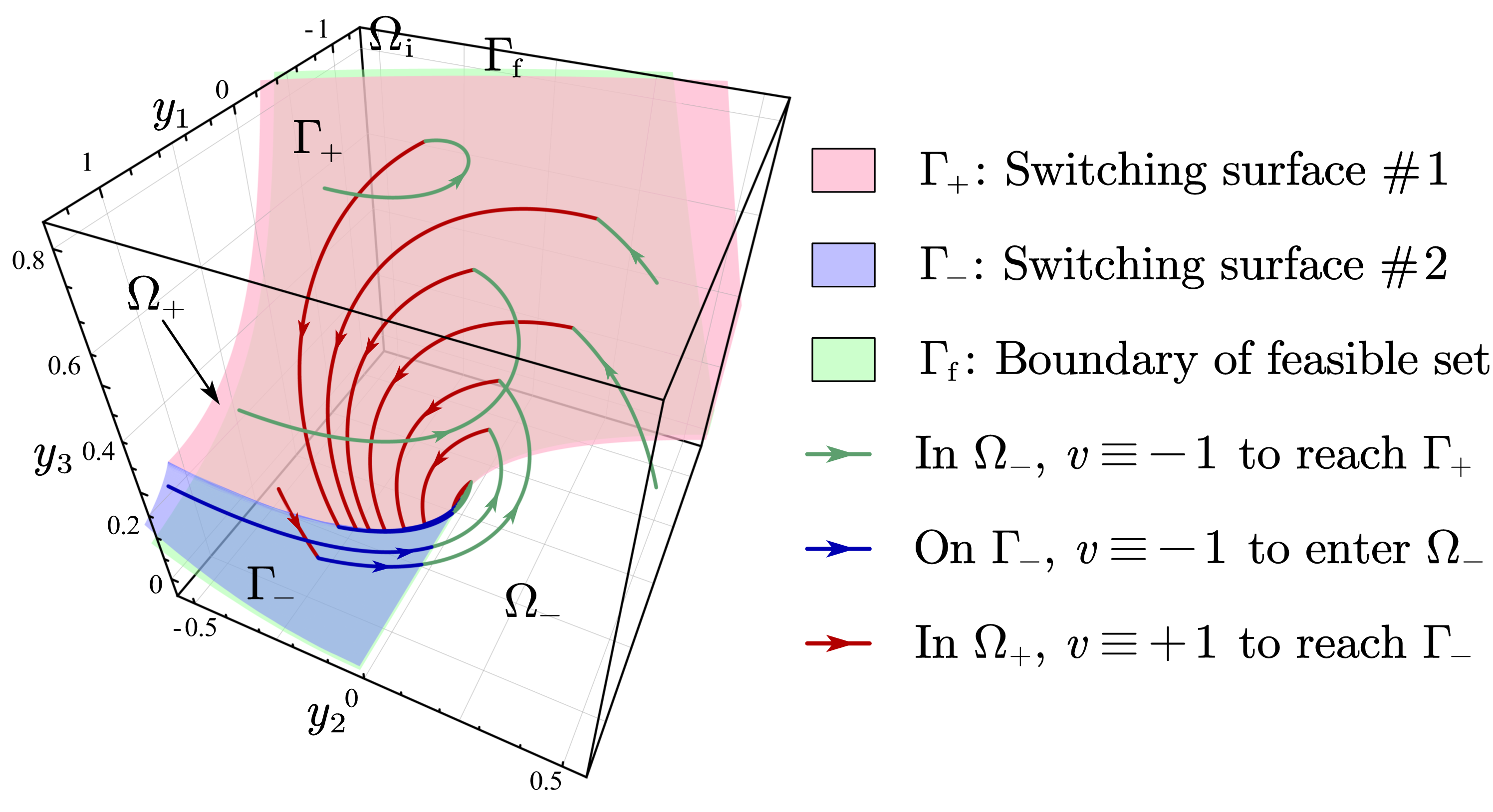}
\caption{Switching surfaces and the state space's structure in problem \eqref{eq:optimalproblem_n4s3_equivalent}.}
\label{fig:switching_surface}
\end{figure}

\subsubsection{Switching Surfaces of Problem \eqref{eq:optimalproblem_n4s3_equivalent}}

Consider problem \eqref{eq:optimalproblem_n4s3_equivalent} with arbitrarily given initial states. Through more refined calculations, the switching surfaces are obtained, as shown in Fig. \ref{fig:switching_surface}. The boundary of feasible set is $\Gamma_\mathrm{f}$, i.e., problem \eqref{eq:optimalproblem_n4s3_equivalent} is infeasible with $\vy\in\Omega_\mathrm{i}$ where $\Omega_\mathrm{i}$ has smaller $y_1$ than $\Gamma_\mathrm{f}$. In $\Omega_-$ which has larger $y_1$ than $\Gamma_+\cup\Gamma_-$, $v\equiv-1$ until $\vy$ reaches the switching surface $\Gamma_+$, and then $\vy$ enters $\Omega_+$ which is between $\Gamma_\mathrm{f}$ and $\Gamma_+\cup\Gamma_-$. In $\Omega_+$, $v\equiv+1$ until $\vy$ reaches $\Gamma_-$, and then $\vy$ moves along $\Gamma_-$ until entering $\Omega_-$. In summary, $\vy$ chatters between $\Gamma_\pm$ and $\Omega_\pm$. The analytical expressions are provided in Appendix \ref{app:switching_surfaces}.

\subsubsection{Optimal Solution of Problem \eqref{eq:optimalproblem} of Order 4}

The conclusions on chattering can be generalized to more general boundary conditions and constraints. In problem \eqref{eq:optimalproblem} of order 4, $\vx$ needs to follow the chattering mode in \eqref{eq:Chattering_n4s3_optimal_u} to enter the constraint arcs $\left\{x_3\equiv\pm M_3\right\}$. Therefore, the switching surface in Fig. \ref{fig:switching_surface} can help to solve problem \eqref{eq:optimalproblem} of order 4.

The optimal trajectory and the MIM-trajectory of a 4th-order position-to-position problem with full state constraints are shown in Figs. \ref{fig:firstshow}(d-e), respectively. In this example, the MIM-trajectory is the same to the S-shaped trajectory. The optimal terminal time is $t_{\f ,\mathrm{opt}}\approx12.6645$, while MIM's terminal time  is $t_{\f ,\mathrm{MIM}}\approx12.6667$, achieving a relative error of $ 1.7\times10^{-4}$. However, the difference between $t_{\f,\mathrm{MIM}}$ and $t_{\f,\mathrm{opt}}$ can be large when the constraints vary, as shown in Fig. \ref{fig:more_optimal}. In fact, $t_{\f,\mathrm{MIM}}-t_{\f,\mathrm{opt}}$ can converge to $\infty$ when $M_0\to0$ with fixed $M_1,M_2,M_3$. Although the error in \eqref{eq:Chattering_n4s3_equivalent_optimal_Jalpha} is small, the loss of S-shaped trajectories can be large and even infinite compared to optimal trajectories with chattering.

\begin{figure}[!t]
\centering
\includegraphics[width=\columnwidth]{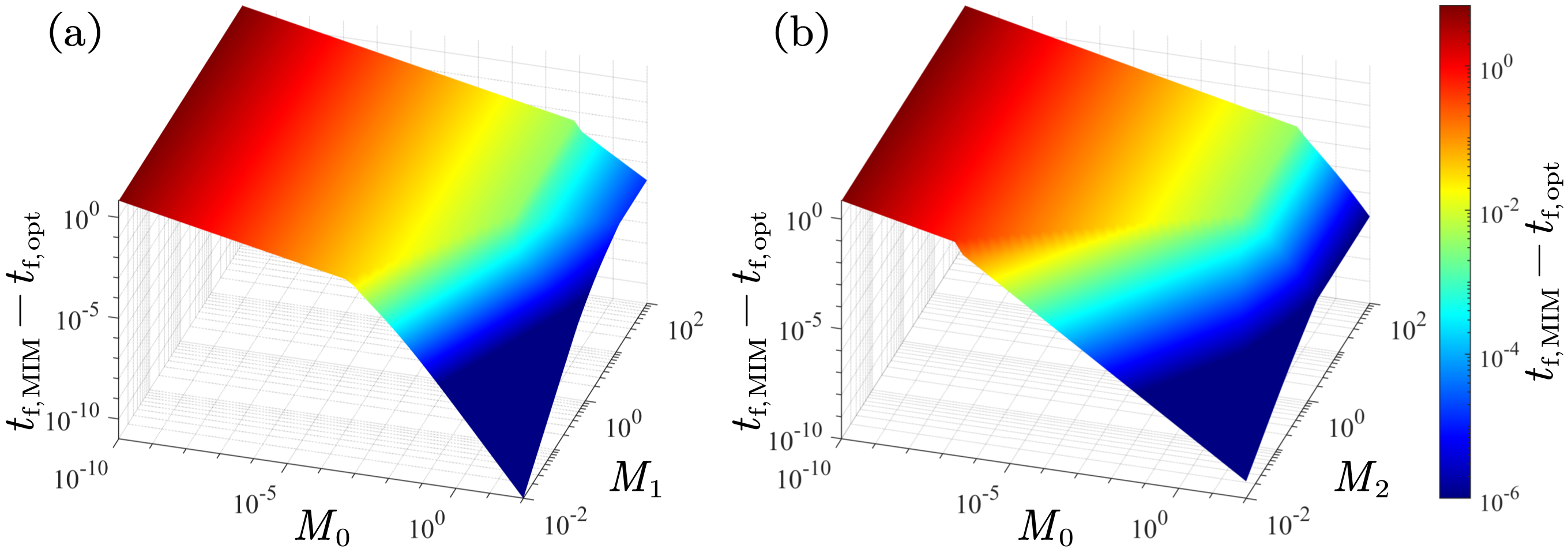}
\caption{$t_{\f,\mathrm{MIM}}-t_{\f,\mathrm{opt}}$ in 4th-order problems. Fix $M_4=\infty$, $M_3=4$, $\vx_0=\vzero$, and $\vx_\f=x_{\f4}\ve_4$ where $x_{\f4}>0$ is large enough. (a) Let $M_0,M_1$ vary, while $M_2=1.5$ is fixed. (b) Let $M_1,M_2$ vary, while $M_1=1$ is fixed.}
\label{fig:more_optimal}
\end{figure}

\subsubsection{Physical Realizability of Chattering Trajectories}

Optimal chattering trajectories can be realized in real-world equipments. For example, if the chattering trajectory in Fig. \ref{fig:firstshow}(d) serves as reference and is interpolated by a finite control frequency, then the interpolated control $u$ has a finite total variation, as shown in Fig. \ref{fig:interpolation_nonchattering}. In the context, the analytical optimal trajectory is interpolated by a control period $0.005$ based on the cubic spline method. The snap $u$ is calculated by the second-order derivative of the acceleration $x_2$ since higher-order derivative is noisy. Therefore, the 4th-order chattering trajectory is physically realizable and can be applied to equipments like ultra-precision wafer stages in practice.

\begin{figure}[!t]
\centering
\includegraphics[width=\columnwidth]{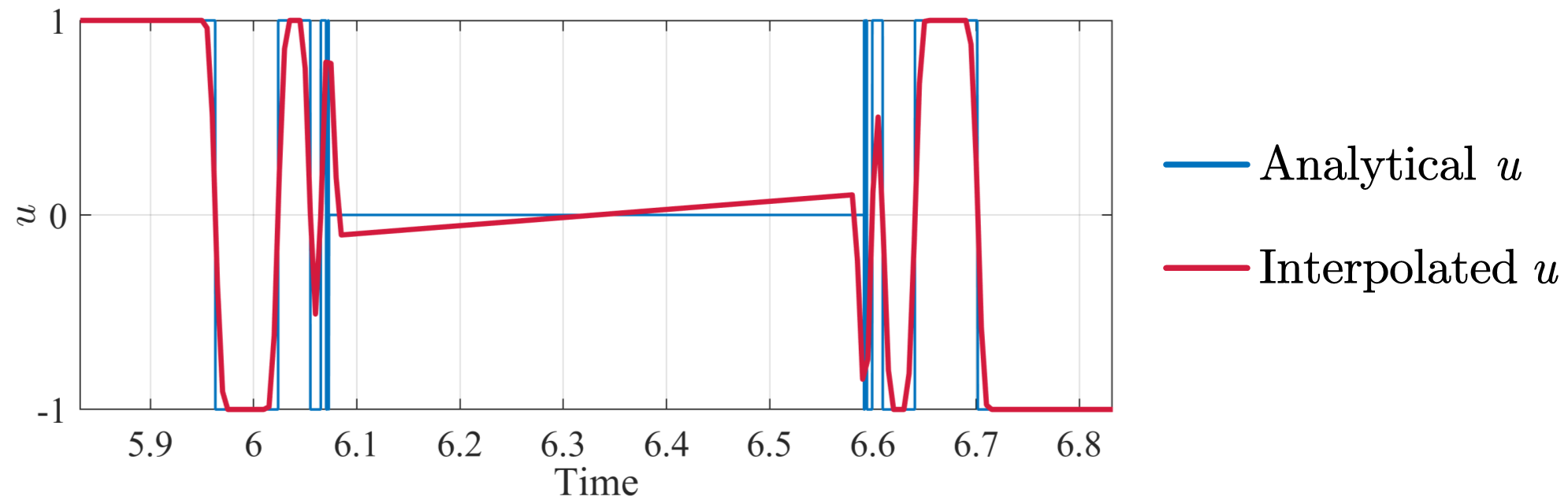}
\caption{The analytical and interpolated $u$ of the trajectory in Fig. \ref{fig:firstshow}(d).}
\label{fig:interpolation_nonchattering}
\end{figure}

\section{Non-Existence of Chattering in Low-Order Problems}\label{sec:ChatteringPhenomena3rdOrder}

As pointed out in Section \ref{sec:Introduction}, no existing works have pointed out whether the chattering phenomenon exists in time-optimal control problem for chain-of-integrator in the form of \eqref{eq:optimalproblem} so far. With a large amount of work on trajectory planning, it is universally accepted that S-shaped trajectories without chattering are optimal in 3rd-order or lower-order problems, i.e., jerk-limited trajectories. This section proves Theorem \ref{thm:Chattering_n4s3_optimal_allcases}.\ref{thm:Chattering_n4s3_optimal_allcases_loworder}, i.e., chattering does not occur in problem \eqref{eq:optimalproblem} when $n\leq4$ except the case where $n=4$ and $\abs{s}=3$; hence, existing literature on 3rd-order or lower-order problems are correct. Without loss of generality, assume that the chattering occurs in a left-side neighborhood of $t_\infty$, i.e., $\left[t_0,t_\infty\right]$ in Theorem \ref{thm:ChatteringPhenomenaBehavior}. Based on Theorem \ref{thm:ChatteringPhenomenaBehavior}, only one constraint $s$ is considered at one time. The set of junction time is $\left\{t_i\right\}_{i=1}^\infty$ which monotonically increases and converges to $t_\infty$.

\subsection{Cases where $n\leq3$}\label{subsec:ChatteringPhenomena3rdOrder_n3}
Theorem \ref{thm:ChatteringPhenomenaBehavior}.\ref{thm:ChatteringPhenomenaBehavior_1sn} implies that the chattering phenomenon does not occur when $n\leq2$ since $1<\abs{s}<n$.

For $n=3$, it holds that $\abs{s}=2$ due to Theorem \ref{thm:ChatteringPhenomenaBehavior}.\ref{thm:ChatteringPhenomenaBehavior_1sn}. Consider the chattering constraint $s=\overline{2}$. Then, $\forall i\in\N^*$, $x_1\left(t_i\right)=0$ and $x_2\left(t_i\right)=M_2$. By Theorem \ref{thm:ChatteringPhenomenaBehavior}.\ref{thm:ChatteringPhenomenaBehavior_sgnxs}, $x_2>0$ during $\left(t_0,t_\infty\right)$; hence, $x_3\left(t_i\right)<x_3\left(t_{i+1}\right)$. Theorem \ref{thm:ChatteringPhenomenaBehavior}.\ref{thm:ChatteringPhenomenaBehavior_sh} implies that during $\left(t_i,t_{i+1}\right)$, $x_2<M_2$. So $\forall t\in\left[t_i,t_i+\frac{x_3\left(t_{i+1}\right)-x_3\left(t_{i}\right)}{M_2}\right]$,
\begin{equation}
\begin{aligned}
&x_3\left(t\right)=x_3\left(t_i\right)+\int_{t_i}^{t}x_2\left(\tau\right)\mathrm{d}\tau\\
<&x_3\left(t_i\right)+M_2\left(t-t_i\right)\leq x_3\left(t_{i+1}\right).
\end{aligned}\end{equation}
Therefore, $t_{i+1}-t_i>\frac{x_3\left(t_{i+1}\right)-x_3\left(t_{i}\right)}{M_2}$. However, a feasible control $\hat{u}\equiv0$ in $\left(t_i,t_i+\frac{x_3\left(t_{i+1}\right)-x_3\left(t_{i}\right)}{M_2}\right)$ successfully drives $\vx$ from $\vx\left(t_i\right)$ to $\vx\left(t_{i+1}\right)$ along $\left\{x_2\equiv M_2\right\}$ with less time, which contradicts BPO. Therefore, chattering phenomena do not occur when $n=3$.

\begin{remark}
Existing classification-based works on jerk-limited trajectory planning \cite{haschke2008line,kroger2011opening,he2020time} have proved that 3rd-order optimal controls switch for finitely many times when $M_3=\infty$, which are consistent with the conclusion in this section. This paper further proves that chattering phenomena do not occur in 3rd-order problems when $M_3<\infty$. As pointed out in Section \ref{sec:ChatteringPhenomena4thOrder}, chattering can occur when $n=4$ and $\abs{s}=3$; hence, classification-based S-shaped trajectories cannot be extended to time-optimal snap-limited trajectories.

\end{remark}

\subsection{Cases where $n=4$ and $\abs{s}\not=3$}\label{subsec:ChatteringPhenomena3rdOrder_n4s2}
Consider problem \eqref{eq:optimalproblem} of order 4. By Theorem \ref{thm:ChatteringPhenomenaBehavior}.\ref{thm:ChatteringPhenomenaBehavior_1sn}, $\abs{s}\in\left\{2,3\right\}$. This section considers the chattering constraint $s=\overline{2}$. Firstly, the recursive expression for junction time $\left\{t_i\right\}_{i=1}^\infty$ is provided in Proposition \ref{prop:Chattering_n4s2_recursive_tau}. Then, Proposition \ref{prop:Chattering_n4s2_recursive_tau_sum_infty} proves that $t_i$ converges to $\infty$, which contradicts the chattering phenomenon.

According to Theorem \ref{thm:ChatteringPhenomenaBehavior}.\ref{thm:ChatteringPhenomenaBehavior_switchingtime}, $\forall i\in\N$, $u$ switches for at most 2 times during $\left(t_i,t_{i+1}\right)$. Assume that
\begin{equation}\label{eq:chattering_n4s2_u}
u\left(t\right)=\begin{dcases}
u_i,&t\in\left(t_{i+1}-\tau_i'',t_{i+1}-\tau_i'\right),\\
-u_i,&t\in\left(t_{i+1}-\tau_i',t_{i+1}-\tau_i\right),\\
u_i,&t\in\left(t_{i+1}-\tau_i,t_{i+1}\right),
\end{dcases}\end{equation}
where $u_i\in\left\{M_0,-M_0\right\}$. Specifically, $0\leq\tau_{i}\leq\tau_{i}'\leq\tau_{i}'' \triangleq t_{i+1}-t_i$.  Note that $x_1\left(t_i\right)=x_1\left(t_{i+1}\right)=0$ and $x_2\left(t_i\right)=x_2\left(t_{i+1}\right)=M_2$.

Since $x_2\leq M_2$, it can be solved that $\forall i\in\N$,
\begin{equation}\label{eq:chattering_n4s2_tau_u}
u_i=-M_0,\,\tau_{i}=\frac{\tau_{i}'}{3}=\frac{\tau_{i}''}{4}=\frac{t_{i+1}-t_i}{4}.\end{equation}
Hence, the control $u$ on $\left(t_i,t_{i+1}\right)$ is determined by $t_{i+1}-t_i=4\tau_i$. Based on the uniqueness of optimal control, i.e., Lemma \ref{lemma:costate}.\ref{lemma:costate_uniqueoptimalcontrol}, the recursive expression for $\left\{\tau_i\right\}_{i=1}^\infty$ is given as follows.

\begin{proposition}\label{prop:Chattering_n4s2_recursive_tau}
If chattering occurs when $n=4$ and $s=\overline{2}$, then $\forall i\in\N^*$, it holds that $f_{\mathrm{c}}\left(\tau_{i+2};\tau_i,\tau_{i+1}\right)=0$, where
\begin{equation}\label{eq:chattering_n4s2_recursive_tau_fc}
\begin{aligned}
f_{\mathrm{c}}\left(\xi;\xi_1,\xi_2\right)\triangleq&\left(\xi_1^2-\xi_2^2\right)\xi^2+\left(\xi_1^3+2\xi_1^2\xi_2-\xi_2^3\right)\xi\\
-&\xi_1^3\xi_2-2\xi_1^2\xi_2^2+\xi_2^4.
\end{aligned}\end{equation}
Furthermore, if $0<\tau_{i+1}<\tau_i$, then $f_{\mathrm{c}}\left(\tau_{i+2};\tau_i,\tau_{i+1}\right)=0$ has a unique positive real root $\tau_{i+2}$, and $0<\tau_{i+2}<\tau_{i+1}<\tau_i$.
\end{proposition}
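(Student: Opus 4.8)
The plan is to extract the recursion from the polynomial form of the costate $\lambda_1$ on the chattering arcs and then read the sign information off the resulting quadratic.

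\textbf{Costate structure.} First I would record the shape of $\vlambda$ on $\left(t_0,t_\infty\right)$ when $s=\overline{2}$. Since $x_4$ never touches its constraint, \eqref{eq:derivative_costate} and \eqref{eq:derivative_costate_lambdan_zero} give $\lambda_4\equiv\const$; the only junction is at $x_2$, so by Proposition \ref{prop:junction_condition} $\lambda_3$ is continuous and hence a single globally affine function on $\left(t_0,t_\infty\right)$, $\lambda_2$ is piecewise quadratic with downward jumps at the $t_i$, and $\lambda_1$ is piecewise cubic but continuous everywhere (Lemma \ref{lemma:costate}.\ref{lemma:lambda1continue}). A short preliminary argument shows $\lambda_4\neq0$: if $\lambda_4=0$ then $\lambda_1$ is at most quadratic on each $\left(t_i,t_{i+1}\right)$ with leading coefficient equal to the constant $\lambda_3/2$, and matching $\lambda_1$ across $t_{i+1}$ would force $\tau_i=\tau_{i+1}$ for all $i$, whence $t_{i+1}-t_i=4\tau_i\equiv\const$ and $t_i\to\infty$, contradicting $t_i\to t_\infty<\infty$; similarly $\lambda_3\not\equiv0$ (otherwise $\lambda_1$ is affine and cannot produce the two sign changes of $u$ forced by \eqref{eq:chattering_n4s2_tau_u}). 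So $\lambda_1$ is a genuine cubic on each arc.

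\textbf{Deriving $f_{\mathrm{c}}$.} On $\left(t_i,t_{i+1}\right)$ the cubic $\lambda_1$ has its two top coefficients fixed globally (by $\lambda_4$ and $\lambda_3$) and, by PMP and \eqref{eq:chattering_n4s2_u}--\eqref{eq:chattering_n4s2_tau_u} (Lemma \ref{lemma:costate}.\ref{lemma:costate_bangbang}), vanishes with simple zeros exactly at the switching instants $t_i+\tau_i$ and $t_i+3\tau_i$. Writing $A=-\lambda_4\neq0$ and letting $B$ be the zero of the affine function $\lambda_3$, one gets
\begin{equation*}
\lambda_1(t)=\frac{A}{6}\left[(t-B)^3-(p_i^2+p_iq_i+q_i^2)(t-B)+p_iq_i(p_i+q_i)\right],\qquad t\in\left(t_i,t_{i+1}\right),
\end{equation*}
where $p_i:=t_i+\tau_i-B$ and $q_i:=t_i+3\tau_i-B=p_i+2\tau_i$ are the two shifted switching zeros and $-(p_i+q_i)$ is the third zero (the $(t-B)^2$ term is absent because $\lambda_1''=\lambda_3=A(t-B)$). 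Imposing continuity of $\lambda_1$ at $t_{i+1}$ and using $t_{i+1}-B=p_i+3\tau_i$, I would factor both cubics through their zeros; the matching condition then collapses to
\begin{equation*}
\tau_i^2(3p_i+5\tau_i)=\tau_{i+1}^2(3p_i+9\tau_i+4\tau_{i+1}),
\end{equation*}
which already rules out $\tau_i=\tau_{i+1}$ (equality forces $5\tau_i=13\tau_i$) and solves for $p_i$ as an explicit rational function of $\tau_i,\tau_{i+1}$. The same identity at $t_{i+2}$ gives $p_{i+1}$ as a rational function of $\tau_{i+1},\tau_{i+2}$; substituting both into the elementary relation $p_{i+1}-p_i=3\tau_i+\tau_{i+1}$ eliminates the global constant $B$, and after cancelling the positive factors $\tau_i+\tau_{i+1}$ and $\tau_{i+1}+\tau_{i+2}$ the identity reduces to $(\tau_i-\tau_{i+1})\tau_{i+2}^2+(\tau_i^2+\tau_i\tau_{i+1}-\tau_{i+1}^2)(\tau_{i+2}-\tau_{i+1})=0$, which, multiplied through by $\tau_i+\tau_{i+1}$, is exactly $f_{\mathrm{c}}\left(\tau_{i+2};\tau_i,\tau_{i+1}\right)=0$ with $f_{\mathrm{c}}$ as in \eqref{eq:chattering_n4s2_recursive_tau_fc}.

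\textbf{The sign statement and the hard part.} For the ``furthermore'', assume $0<\tau_{i+1}<\tau_i$. Then $f_{\mathrm{c}}\left(\cdot;\tau_i,\tau_{i+1}\right)$ is a genuine quadratic in $\xi$ with positive leading coefficient $\tau_i^2-\tau_{i+1}^2$, with $f_{\mathrm{c}}\left(0;\tau_i,\tau_{i+1}\right)=-\tau_{i+1}(\tau_i^3+2\tau_i^2\tau_{i+1}-\tau_{i+1}^3)<0$ and $f_{\mathrm{c}}\left(\tau_{i+1};\tau_i,\tau_{i+1}\right)=\tau_{i+1}^2(\tau_i^2-\tau_{i+1}^2)>0$; hence it has exactly one positive root, and it lies in $\left(0,\tau_{i+1}\right)$. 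Since $\tau_{i+2}>0$ (the $t_i$ increase strictly, Theorem \ref{thm:ChatteringPhenomenaBehavior}.\ref{thm:ChatteringPhenomenaBehavior_sh}) is a root, it must be that root, giving $0<\tau_{i+2}<\tau_{i+1}<\tau_i$. The main obstacle is the middle step: eliminating $B$ and simplifying to the precise form \eqref{eq:chattering_n4s2_recursive_tau_fc} is a multi-step rational-function computation in which one must track the spurious linear factors that cancel, and each division (by $\tau_i^2-\tau_{i+1}^2$, by $\tau_i+\tau_{i+1}$, and so on) must be licensed by first disposing of the degenerate cases $\lambda_4=0$, $\lambda_3\equiv0$, $\tau_i=\tau_{i+1}$, and $\tau_i=0$; the costate bookkeeping and the quadratic sign analysis are then routine.
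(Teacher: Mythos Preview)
Your proof is correct but follows a genuinely different route from the paper's. The paper never touches the costate: it considers \emph{four} consecutive arcs $\left[t_i,t_{i+4}\right]$, writes $\vx\left(t_{i+4}\right)$ via Proposition~\ref{prop:system_dynamics} in terms of three functions $F_1,F_2,F_3$ of $\vtau=\left(\tau_i,\tau_{i+1},\tau_{i+2},\tau_{i+3}\right)$, and argues that if $\det\frac{\partial\vF}{\partial\left(\tau_i,\tau_{i+1},\tau_{i+2}\right)}\neq0$ then the implicit function theorem produces a one-parameter family of distinct controls (parametrised by $\tau_{i+3}$) all driving $\vx\left(t_i\right)$ to $\vx\left(t_{i+4}\right)$ in the same time, contradicting the uniqueness in Lemma~\ref{lemma:costate}.\ref{lemma:costate_uniqueoptimalcontrol}. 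That determinant factors as $6\left(\tau_{i+1}+\tau_{i+2}\right)f_{\mathrm{c}}\left(\tau_{i+2};\tau_i,\tau_{i+1}\right)$, yielding \eqref{eq:chattering_n4s2_recursive_tau_fc}. Your approach instead parametrises $\lambda_1$ explicitly on each arc via its two switching zeros, matches values at $t_{i+1}$ and $t_{i+2}$, and eliminates the globally affine part of $\lambda_3$. What your route buys is constructiveness---you actually exhibit the costate---and you avoid the implicit function theorem entirely; the price is the preliminary non-degeneracy bookkeeping ($\lambda_4\neq0$, $\tau_i\neq\tau_{i+1}$) that the paper's state-space argument sidesteps. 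For the ``furthermore'' both proofs perform the same quadratic sign analysis; yours is in fact slightly leaner, needing only $f_{\mathrm{c}}\left(0\right)<0$, $f_{\mathrm{c}}\left(\tau_{i+1}\right)>0$, and the leading sign, whereas the paper also computes $f_{\mathrm{c}}'\left(0\right)$, $f_{\mathrm{c}}''$, and $f_{\mathrm{c}}\left(\tau_i\right)$.
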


\begin{proof}
$\forall i\in\N$, consider the trajectory between $t_i$ and $t_{i+4}$. Then, Eq. \eqref{eq:chattering_n4s2_tau_u} and Proposition \ref{prop:system_dynamics} imply that
\begin{equation}\label{eq:chattering_n4s2_recursive_tau}
\begin{dcases}
x_1\left(t_{i+4}\right)=x_1\left(t_{i}\right)=0,\\
x_2\left(t_{i+4}\right)=x_2\left(t_{i}\right)=M_2,\\
x_3\left(t_{i+4}\right)=x_3\left(t_i\right)+4M_2F_1\left(\vtau\right)-2M_0F_2\left(\vtau\right),\\\
x_4\left(t_{i+4}\right)=x_4\left(t_i\right)+8M_2F_1\left(\vtau\right)^2-4M_0F_3\left(\vtau\right).
\end{dcases}\end{equation}

In \eqref{eq:chattering_n4s2_recursive_tau}, denote $\vtau=\left(\tau_j\right)_{j=i}^{i+3}$ and $\vF\left(\vtau\right)=\left(F_{ k}\left(\vtau\right)\right)_{ k=1}^3$, where
\begin{equation}\label{eq:chattering_n4s2_recursive_tau_F}
\begin{dcases}
F_1\left(\vtau\right)=\sum_{j=0}^{3}\tau_{i+j},\,F_2\left(\vtau\right)=\sum_{j=0}^{3}\tau_{i+j}^3,\\
F_3\left(\vtau\right)=\sum_{j=0}^{3}\tau_{i+j}^3\left(\tau_{i+j}+2\sum_{k=j+1}^{3}\tau_{i+k}\right).
\end{dcases}\end{equation}

Assume that $\det\frac{\partial\vF}{\partial\left(\tau_j\right)_{j=i}^{i+2}}\not=0$. According to the implicit function theorem \cite{stein2009real}, $\exists \delta\in\left(0,\min_{i\leq j\leq i+3}\tau_j\right)$, $\hat{\vtau}=\left(\hat{\tau}_{j}\right)_{j=i}^{i+3}\in \Setminus{B_\delta\left(\vtau\right)}{\left\{\vtau\right\}}$, s.t. $\vF\left(\hat{\vtau}\right)=\vF\left(\vtau\right)$. Following \eqref{eq:chattering_n4s2_u} and \eqref{eq:chattering_n4s2_tau_u}, denote $u$ and $\hat{u}$ as the controls induced by $\vtau$ and $\hat{\vtau}$, respectively. According to \eqref{eq:chattering_n4s2_recursive_tau}, both $u$ and $\hat{u}$ can drive $\vx$ from $\vx\left(t_i\right)$ to $\vx\left(t_{i+4}\right)$ during $\left(t_i,t_{i+4}\right)$, with the same motion time $t_{i+4}-t_i=4F_1\left(\vtau\right)=4F_1\left(\hat\vtau\right)$. The above conclusion contradicts Lemma \ref{lemma:costate}.\ref{lemma:costate_uniqueoptimalcontrol}. Therefore, it holds that

\begin{equation}
\det\frac{\partial\vF}{\partial\left(\tau_j\right)_{j=i}^{i+2}}=6\left(\tau_{i+1}+\tau_{i+2}\right)f_{\mathrm{c}}\left(\tau_{i+2};\tau_i,\tau_{i+1}\right)=0,\end{equation}
where $f_{ \mathrm{c}}$ is defined in \eqref{eq:chattering_n4s2_recursive_tau_fc}. Note that $\tau_{i+1},\tau_{i+2}>0$; hence, $f_{ \mathrm{c}}\left(\tau_{i+2};\tau_i,\tau_{i+1}\right)=0$ holds.

If $0<\tau_{i+1}<\tau_i$, then
\begin{equation}
\begin{dcases}
f_{\mathrm{c}}\left(0;\tau_i,\tau_{i+1}\right)=-\tau_i^3\tau_{i+1}-2\tau_i^2\tau_{i+1}^2+\tau_{i+1}^4<0,\\
f_{\mathrm{c}}'\left(0;\tau_i,\tau_{i+1}\right)=\tau_i^3+2\tau_i^2\tau_{i+1}-\tau_{i+1}^3>0,\\
f_{\mathrm{c}}''\left(\tau_{i+2};\tau_i,\tau_{i+1}\right)=2\left(\tau_i^2-\tau_{i+1}^2\right)>0,
\end{dcases}\end{equation}

So $f_{\mathrm{c}}\left(\tau;\tau_i,\tau_{i+1}\right)=0$ has a unique positive real root $\tau_{i+2}$. Note that $0<\tau_{i+1}<\tau_i$ and
\begin{equation}
\begin{dcases}
f_{\mathrm{c}}\left(\tau_i;\tau_i,\tau_{i+1}\right)=\left(\tau_i-\tau_{i+1}\right)\left(2\tau_i-\tau_{i+1}\right)\left(\tau_i+\tau_{i+1}\right)^2,\\
f_{\mathrm{c}}\left(\tau_{i+1};\tau_i,\tau_{i+1}\right)=\tau_{i+1}^2\left(\tau_i-\tau_{i+1}\right)\left(\tau_i+\tau_{i+1}\right).\\
\end{dcases}\end{equation}
Hence,
\begin{equation}
0=f_{\mathrm{c}}\left(\tau_{i+2};\tau_i,\tau_{i+1}\right)<f_{\mathrm{c}}\left(\tau_{i+1};\tau_i,\tau_{i+1}\right)<f_{\mathrm{c}}\left(\tau_i;\tau_i,\tau_{i+1}\right).\end{equation}
Note that $f_{ \mathrm{c}}\left(\tau;\tau_i,\tau_{i+1}\right)$ increases monotonically w.r.t. $\tau$ when $\tau>0$. Therefore, $0<\tau_{i+2}<\tau_{i+1}<\tau_i$ holds.
\end{proof}

Since $\lim_{i\to\infty}\tau_i=\lim_{i\to\infty}\frac{t_{i+1}-t_i}{4}=0$, $\exists i^*\in\N^*$, s.t. $\tau_{i^*}>\tau_{i^*+1}$. Without loss of generality, assume that $\tau_1>\tau_2$; otherwise, by BPO, one can consider the trajectory during $\left[t_{i^*-1},t_{\infty}\right]$. By Proposition \ref{prop:Chattering_n4s2_recursive_tau}, $\left\{\tau_i\right\}_{i=1}^\infty$ decreases strictly monotonically. A chattering phenomenon requires that $\sum_{i=1}^{\infty}\tau_i=\frac{t_\infty-t_1}{4}<\infty$. Hence, $\left\{\tau_i\right\}_{i=1}^\infty$ should exhibit a sufficiently rapid decay rate. However, Proposition \ref{prop:Chattering_n4s2_recursive_tau_sum_infty} points out that $\sum_{i=1}^{\infty}\tau_i=\infty$, leading to a contradiction.

\begin{proposition}\label{prop:Chattering_n4s2_recursive_tau_sum_infty}
Consider $\left\{\tau_i\right\}_{i=1}^\infty\subset\R$ where $0<\tau_2<\tau_1$. Assume that $\forall i\in\N$, $f_{\mathrm{c}}\left(\tau_{i+2};\tau_i,\tau_{i+1}\right)=0$ holds, where $f_{\mathrm{c}}$ is defined in \eqref{eq:chattering_n4s2_recursive_tau_fc}. Then, $\sum_{i=1}^{\infty}\tau_i=\infty$.
\end{proposition}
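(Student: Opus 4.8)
The plan is to collapse the second-order recursion on $\{\tau_i\}$ to a scalar recursion on the successive ratios $r_i:=\tau_{i+1}/\tau_i$, and then to extract a decay rate for $r_i\to1$ that is too slow for $\sum\tau_i$ to converge. First I would record that, starting from $0<\tau_2<\tau_1$, Proposition~\ref{prop:Chattering_n4s2_recursive_tau} applied inductively gives $0<\tau_{i+2}<\tau_{i+1}<\tau_i$ for every $i$, so $r_i\in(0,1)$ for all $i$. Substituting $\tau_{i+1}=r_i\tau_i$ and $\tau_{i+2}=r_ir_{i+1}\tau_i$ into $f_{\mathrm c}(\tau_{i+2};\tau_i,\tau_{i+1})=0$ and dividing out the common factor $\tau_i^4 r_i$ turns \eqref{eq:chattering_n4s2_recursive_tau_fc} into a homogeneous identity in $r_i,r_{i+1}$; using $1-r_i^2=(1-r_i)(1+r_i)$ and $1+2r_i-r_i^3=(1+r_i)(1+r_i-r_i^2)$ the common factor $1+r_i$ cancels and one is left with
\begin{equation*}
1-r_{i+1}=\frac{r_i(1-r_i)}{1+r_i-r_i^2}\,r_{i+1}^2 .
\end{equation*}
Writing $\epsilon_i:=1-r_i\in(0,1)$ and $g(\epsilon):=\dfrac{\epsilon(1-\epsilon)}{1+\epsilon-\epsilon^2}$ (well defined and continuous on $[0,1)$, since $1+\epsilon-\epsilon^2>0$ there), this says $\epsilon_{i+1}=g(\epsilon_i)\,(1-\epsilon_{i+1})^2$.

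Next I would show $\epsilon_i\to0$, i.e.\ $r_i\to1$. Because $g(\epsilon)\le\epsilon$ on $(0,1)$ (equivalent to $0\le\epsilon(2-\epsilon)$) and $(1-\epsilon_{i+1})^2<1$, we get $0<\epsilon_{i+1}<g(\epsilon_i)\le\epsilon_i$, so $\{\epsilon_i\}$ is strictly decreasing and bounded below by $0$; it converges to some $\epsilon^*\in[0,\epsilon_1]\subset[0,1)$, and passing to the limit yields $\epsilon^*=g(\epsilon^*)(1-\epsilon^*)^2$, impossible for $\epsilon^*\in(0,1)$ since then the right-hand side is strictly $<\epsilon^*$. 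Hence $\epsilon^*=0$. The quantitative heart of the argument is the expansion $g(\epsilon)=\epsilon-2\epsilon^2+\mathcal O(\epsilon^3)$ as $\epsilon\to0^+$; together with $\epsilon_{i+1}<g(\epsilon_i)$ it gives $\epsilon_{i+1}\le\epsilon_i-\tfrac32\epsilon_i^2$ for all $i$ beyond some index $N$ (chosen so that also $\epsilon_i<\tfrac23$). Setting $v_i:=1/\epsilon_i$ and using $\dfrac{v_i^2}{v_i-3/2}-v_i=\dfrac{(3/2)v_i}{v_i-3/2}\ge\tfrac32$ gives $v_{i+1}\ge v_i+\tfrac32$ for $i\ge N$, hence $v_i\ge\tfrac32 i-K$ for a constant $K$, so $\limsup_{i\to\infty}i\,\epsilon_i\le\tfrac23$. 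In particular there is an index $N'$ with $\epsilon_i\le\tfrac34\cdot\tfrac1i$ and $\epsilon_i\le\tfrac12$ for all $i\ge N'$.

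Finally I would turn this rate into divergence of the series. Since $\tau_{i+1}=\tau_1\prod_{j=1}^{i}r_j=\tau_1\prod_{j=1}^{i}(1-\epsilon_j)$ and $\log(1-x)\ge-x-x^2$ on $[0,\tfrac12]$, for $i\ge N'$ one has $\log\prod_{j=1}^{i}(1-\epsilon_j)\ge-C-\sum_{j=N'}^{i}(\epsilon_j+\epsilon_j^2)\ge-C'-\tfrac34\log i$, where $C$ accounts for the finitely many factors $j<N'$ and $C'$ also absorbs the convergent tail $\sum_{j\ge N'}\epsilon_j^2\le\tfrac9{16}\sum_j j^{-2}$. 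Therefore $\tau_{i+1}\ge\tau_1e^{-C'}\,i^{-3/4}$, and since $3/4<1$ we conclude $\sum_i\tau_{i+1}\ge\tau_1e^{-C'}\sum_i i^{-3/4}=\infty$, i.e.\ $\sum_{i=1}^{\infty}\tau_i=\infty$.

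I expect the main obstacle to be obtaining a sufficiently sharp decay estimate in the second step: a generic bound $\epsilon_{i+1}\le\epsilon_i-c\epsilon_i^2$ only forces $\epsilon_i=\mathcal O(1/i)$ with effective constant $\sim1/c$, which translates into $\tau_i\gtrsim i^{-1/c}$, and this is summable when $c<1$. The argument therefore survives only because the quadratic Taylor coefficient of $g$ at $0$ is exactly $2$, giving effective exponent close to $\tfrac12<1$; so the algebraic reduction of $f_{\mathrm c}$ to the clean ratio recursion above — and the exact value of that coefficient — is precisely what must be gotten right, the remaining estimates being routine.
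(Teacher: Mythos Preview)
Your proposal is correct and follows essentially the same route as the paper: homogenize the $f_{\mathrm c}$-recursion to a scalar recursion on the ratios $\epsilon_i=1-\tau_{i+1}/\tau_i$, show $\epsilon_i\downarrow0$ at rate $O(1/i)$, and conclude that $\sum\tau_i$ diverges. The paper writes the ratio recursion in the equivalent quadratic form $\epsilon_{i+1}^2-\bigl(3+\tfrac{1}{\epsilon_i(1-\epsilon_i)}\bigr)\epsilon_{i+1}+1=0$, extracts the sharp limit $\lim_i i\epsilon_i=\tfrac14$ via Stolz--Ces\`aro, and finishes with Raabe--Duhamel; you instead keep the implicit form $\epsilon_{i+1}=g(\epsilon_i)(1-\epsilon_{i+1})^2$, use only the cruder bound $\limsup_i i\epsilon_i\le\tfrac23$, and bound the product $\prod(1-\epsilon_j)$ directly---a slightly more elementary execution of the same idea.
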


\begin{proof}
Since $0<\tau_2<\tau_1$, Proposition \ref{prop:Chattering_n4s2_recursive_tau} implies that $\left\{\tau_i\right\}_{i=1}^\infty\subset\R_{++}$ decreases strictly monotonically.

Denote $r_i\triangleq1-\frac{\tau_{i+1}}{\tau_i}\in\left(0,1\right)$. By $\frac{1}{\tau_i^4}f_{\mathrm{c}}\left(\tau_{i+2};\tau_i,\tau_{i+1}\right)=f_{\mathrm{c}}\left(\left(1-r_i\right)\left(1-r_{i+1}\right);1,1-r_i\right)=0$, it holds that
\begin{equation}\label{eq:chattering_n4s2_recursive_tau_sum_infty_fr}
f_\mathrm{r}\left(r_{i+1};r_i\right)\triangleq r_{i+1}^2-a_ir_{i+1}+1=0,\end{equation}
where $a_i=3+\frac{1}{r_i\left(1-r_i\right)}>3$. Note that
\begin{equation}
\begin{dcases}
f_\mathrm{r}\left(0;r_i\right)=1>0,\,f_\mathrm{r}\left(r_{i+1};r_i\right)=0,\\
f_\mathrm{r}\left(r_i;r_i\right)=-\frac{r_i\left(2-r_i\right)^2}{1-r_i}<0,\\
f_\mathrm{r}\left(1;r_i\right)=-1-\frac{1}{r_i\left(1-r_i\right)}<0.
\end{dcases}\end{equation}
Therefore, $0<r_{i+1}<r_i<1$ holds. In other words, $\left\{r_i\right\}_{i=1}^\infty$ is bounded and strictly monotonically decreasing and. Hence, $\lim_{i\to\infty}r_i=r^*\in\left[0,1\right]$ exists. Note that $\forall i\in\N^*$,
\begin{equation}
r_i\left(1-r_i\right)\left(r_{i+1}^2-3r_{i+1}+1\right)-r_{i+1}=0.\end{equation}
Let $i\to\infty$, and $-{r^*}^2\left(2-r^*\right)^2=0$ holds. Then, $r^*=0$ since $r^*\in\left[0,1\right]$. So $a_i\to\infty$ and $\frac{\tau_{i+1}}{\tau_i}\to1^-$ as $i\to\infty$.

Note that $\frac{1}{a_i} \to0$ and $\frac{1}{a_i}=r_i-4r_i^2+\mathcal{O}\left(r_i^3\right)$ as $i\to\infty$. Since $r_{i+1}=\frac{a_i-\sqrt{a_i^2-4}}{2}$, let $i\to\infty$, resulting in
\begin{equation}
r_{i+1}=\frac{1}{a_i}+\mathcal{O}\left(\frac{1}{a_i^3}\right)=r_i-4r_i^2+\mathcal{O}\left(r_i^3\right),\,i\to\infty.\end{equation}
Therefore, it holds that
\begin{equation}
\lim_{i\to\infty}\frac{r_{i+1}}{r_i}=\lim_{i\to\infty}-4r_i+\mathcal{O}\left(r_i^2\right)=1\end{equation}
and
\begin{equation}
\lim_{i\to\infty}\frac{1}{r_{i+1}}-\frac{1}{r_i}=\lim_{i\to\infty}4\frac{r_i}{r_{i+1}}+\mathcal{O}\left(\frac{r_i^2}{r_{i+1}}\right)=4.\end{equation}

Applying Stolz-Ces\`aro theorem \cite{stein2009real}, the limitation
\begin{equation}
\lim_{i\to\infty}ir_i=\lim_{i\to\infty}\frac{i}{\frac{1}{r_i}}=\lim_{i\to\infty}\frac{i+1-i}{\frac{1}{r_{i+1}}-\frac{1}{r_i}}=\frac14\end{equation}
exists. Therefore,
\begin{equation}
\lim_{i\to\infty}i\left(\frac{\tau_i}{\tau_{i+1}}-1\right)=\lim_{i\to\infty}i\left(\frac{1}{1-r_i}-1\right)=\frac{1}{4}<1.\end{equation}
According to Raabe-Duhamel's test \cite{stein2009real}, $\sum_{i=1}^{\infty}\tau_i=\infty$.
\end{proof}

According to Proposition \ref{prop:Chattering_n4s2_recursive_tau_sum_infty}, $\sum_{i=1}^{\infty}\tau_i=\infty$. However, $\sum_{i=1}^{\infty}\tau_i=\sum_{i=1}^{\infty}\frac{t_{i+1}-t_i}{4}=\frac{t_\infty-t_1}{4}<\infty$, which leads to a contradiction. Hence, chattering phenomena do not occur when $n=4$ and $s=\overline{2}$. A similar analysis can be applied to the case where $n=4$ and $s=\underline{2}$. Therefore, chattering phenomena do not occur when $n=4$ and $\abs{s} \not=3$.

\section{Conclusion}
This paper has set out to investigate chattering phenomena in a classical and open problem \eqref{eq:optimalproblem}, i.e., time-optimal control for high-order chain-of-integrator systems with full state constraints. However, there have existed neither proofs on non-existence nor counterexamples to the chattering phenomenon in the classical problem \eqref{eq:optimalproblem} so far, where chattering means that the control switches for infinitely many times over a finite time period in the investigated problem. This paper established a theoretical framework for the chattering phenomenon in problem \eqref{eq:optimalproblem}, pointing out that there exists at most one active state constraint during a chattering period. An upper bound on control's switching times in an unconstrained arc during chattering is determined. The convergence of states and costates at the chattering limit point is analyzed. This paper proved the existence of the chattering phenomenon in 4th-order problems with velocity constraints in the presence of sufficient separation between the initial and terminal positions, where the decay rate in the time domain was precisely calculated as $\alpha^*\approx0.1660687$. The conclusion can be applied to construct 4th-order trajectories with full state constraints in strict time-optimality. To the best of our knowledge, this paper provides the first strictly time-optimal 4th trajectory with full state constraints. Note that position-to-position snap-limited trajectories with full state constraints are universally applied in ultra-precision control in the industry. Furthermore, this paper proves that chattering phenomena do not exist in other cases of order $n\leq4$. In other words, 4th-order problems with velocity constraints represent problems allowing chattering of the lowest order. The above conclusions rectify a longstanding misconception in the industry concerning the time-optimality of S-shaped trajectories with minimal switching times.

\section*{Acknowledgment}
The authors would like to thank Grace Hii for her experitse in academic English. This work was supported by the National Key Research and Development Program of China under Grant 2023YFB4302003.

\ifCLASSOPTIONcaptionsoff
\newpage
\fi

\bibliographystyle{IEEEtran}
\bibliography{IEEEabrv,refs/ref}

\vskip -2\baselineskip plus -1fil

\begin{IEEEbiography}[{\includegraphics[width=1in,keepaspectratio]{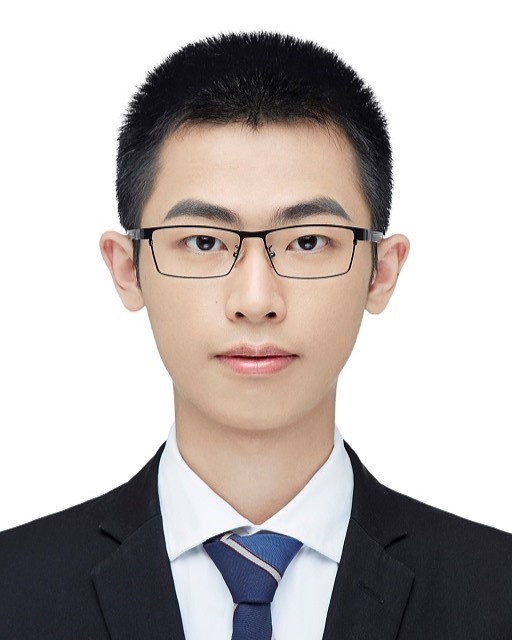}}]{Yunan Wang}
    (S'22) received the B.E. degree in mechanical engineering, in 2022, from the Department of Mechanical Engineering, Tsinghua University, Beijing, China. He is currently working toward the Ph.D. degree in mechanical engineering with the Department of Mechanical Engineering, Tsinghua University, Beijing, China.

    His research interests include optimal control, trajectory planning, toolpath planning, and precision motion control. He was the recipient of the Best Conference Paper Finalist at the 2022 International Conference on Advanced Robotics and Mechatronics, and 2021 Top Grade Scholarship for Undergraduate Students of Tsinghua University.
\end{IEEEbiography}

\vskip -2\baselineskip plus -1fil

\begin{IEEEbiography}[{\includegraphics[width=1in,keepaspectratio]{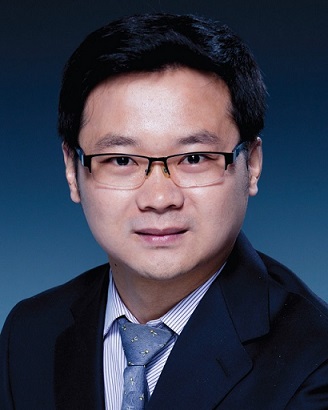}}]{Chuxiong Hu}
    (S'09-M'11-SM'17) received his B.E. and Ph.D. degrees in Mechatronic Control Engineering from Zhejiang University, Hangzhou, China, in 2005 and 2010, respectively. He is currently a professor (tenured) at Department of Mechanical Engineering, Tsinghua University, Beijing, China. From 2007 to 2008, he was a visiting scholar in mechanical engineering with Purdue University, West Lafayette, USA. In 2018, he was a visiting scholar in mechanical engineering with University of California, Berkeley, CA, USA. His research interests include precision motion control, trajectory planning, precision mechatronic systems, intelligent learning and prediction, and robot.

    Prof. Hu is the author of more than 180 journal/conference papers. He was the recipient of the Best Student Paper Finalist at the 2011 American Control Conference, the 2012 Best Mechatronics Paper Award from the ASME Dynamic Systems and Control Division, the 2013 National 100 Excellent Doctoral Dissertations Nomination Award of China, the 2016 Best Paper in Automation Award, the 2018 Best Paper in AI Award from the IEEE International Conference on Information and Automation, the 2022 Best Paper in Theory from the IEEE/ASME International Conference on Mechatronic, Embedded Systems and Applications, the 2023 Best Student Paper from the  International Conference on Control, Mechatronics and Automation, the 2024 Best Paper Award in Advanced Robotics from the International Conference on Advanced Robotics and Mechatronics, and the 2024 Best Paper Award from the IEEE Conference on Industrial Electronics and Applications. He is now an Associate Editor for the IEEE Transactions on Industrial Informatics and a Technical Editor for the IEEE/ASME Transactions on Mechatronics.
\end{IEEEbiography}

\vskip -2\baselineskip plus -1fil

\begin{IEEEbiography}[{\includegraphics[width=1in,keepaspectratio]{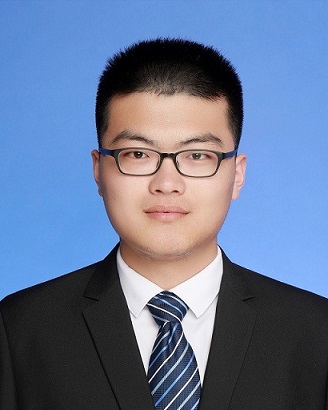}}]{Zeyang Li}
    received the B.E. degree in mechanical engineering from Shanghai Jiao Tong University in 2021, and the M.S. degree in mechanical engineering from Tsinghua University in 2024. He is currently working toward the Ph.D. degree with the Laboratory for Information and Decision Systems, Massachusetts Institute of Technology. His research interests include reinforcement learning and optimal control.
\end{IEEEbiography}

\vskip -2\baselineskip plus -1fil

\begin{IEEEbiography}[{\includegraphics[width=1in,keepaspectratio]{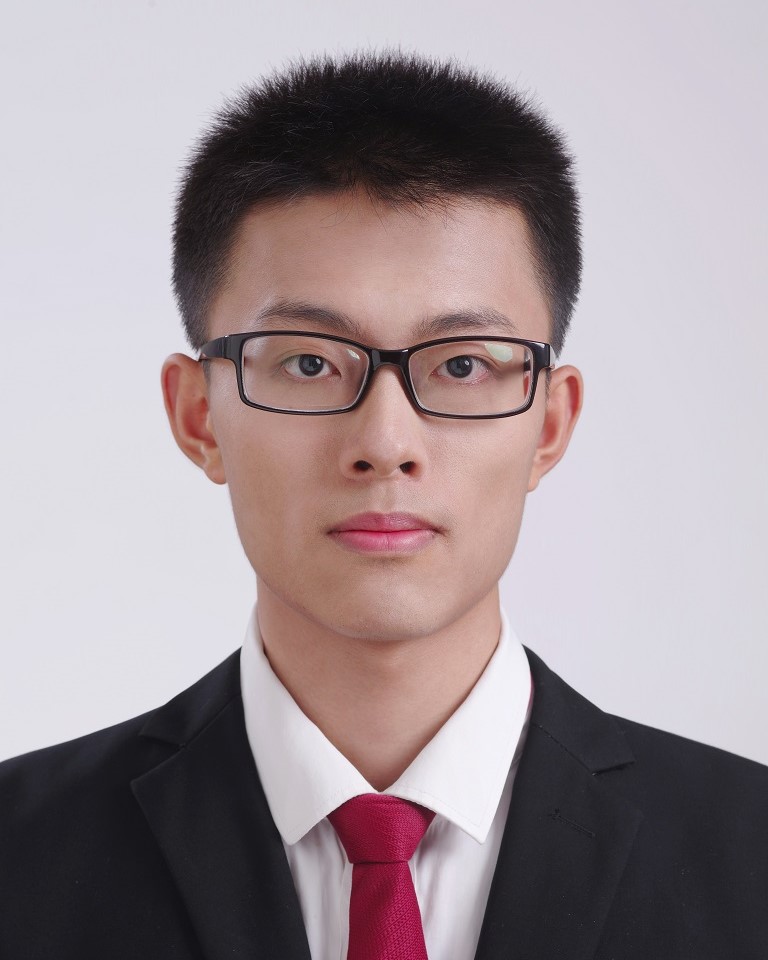}}]{Yujie Lin}
    received the B.S. degree in mathematics and applied mathematics, in 2023, from the Department of Mathematical Sciences, Tsinghua University, Beijing, China. He is currently working toward the Ph.D. degree in mathematics, at Qiuzhen College, Tsinghua University. His research interests include 3\&4-dimensional topology and knot theory.
\end{IEEEbiography}

\vskip -2\baselineskip plus -1fil

\begin{IEEEbiography}[{\includegraphics[width=1in,keepaspectratio]{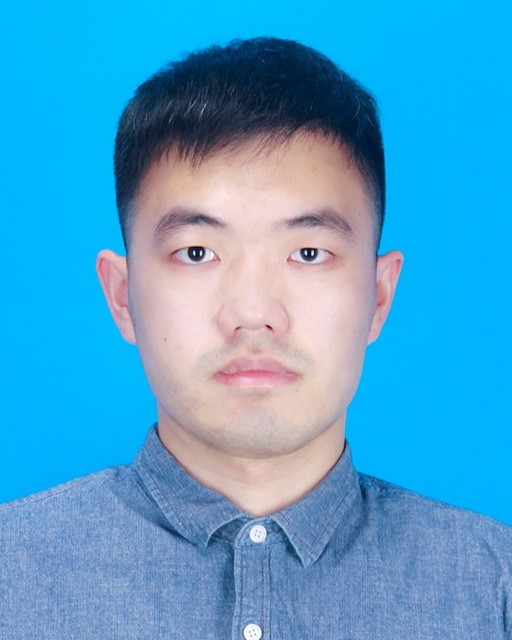}}]{Shize Lin}
    received the B.E. degree in mechanical engineering, in 2020, from the Department of Mechanical Engineering, Tsinghua University, Beijing, China, where he is currently working toward the Ph.D. degree in mechanical engineering. His research interests include robotics, motion planning and precision motion control.
\end{IEEEbiography}

\vskip -2\baselineskip plus -1fil

\begin{IEEEbiography}[{\includegraphics[width=1in,keepaspectratio]{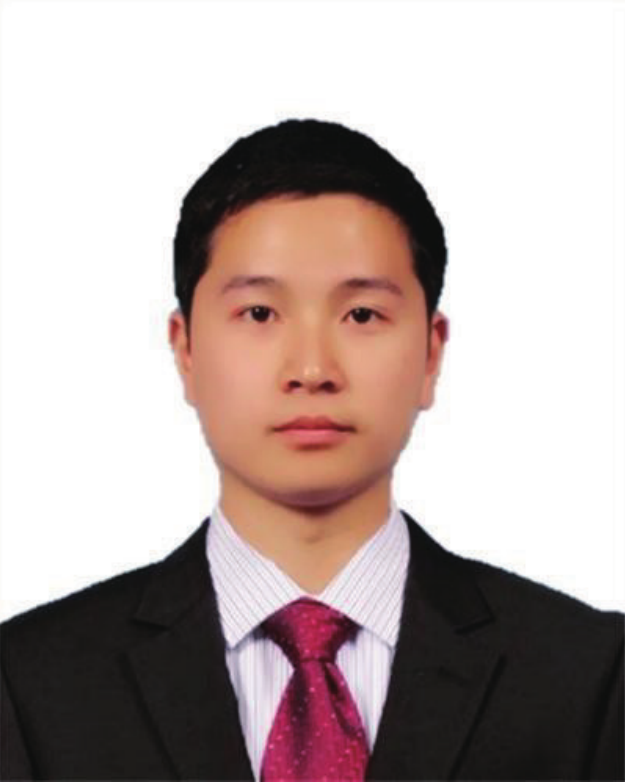}}]{Suqin He}
    received the B.E. degree in mechanical engineering from Department of Mechanical Engineering, Tsinghua University, Beijing, China, in 2016, and the Ph.D. degree in mechanical engineering from the Department of Mechanical Engineering, Tsinghua University, Beijing, China, in 2023.

    His research interests include multi-axis trajectory planning and precision motion control on robotics and CNC machine tools. He was the recipient of the Best Automation Paper from IEEE Internal Conference on Information and Automation in 2016.
\end{IEEEbiography}

\newpage

\onecolumn

\appendices

\section{Proof of Lemma \ref{lemma:costate}}\label{app:proof_lemma_costate}

Lemma \ref{lemma:costate} is a conclusion of our previous work \cite{wang2024time}. The proofs are provided in this appendix for completeness.

\begin{proof}[Proof of Lemma \ref{lemma:costate}.\ref{lemma:lambda1continue}]
Without loss of generality, consider juntion time $t_1$ when $x_1\left(t_1\right)=M_1$. According to \eqref{eq:junction_condition}, $\lambda_1\left(t_1^+\right)\leq\lambda_1\left(t_1^-\right)$ holds.

Assume that $\lambda_1\left(t_1^-\right)>0$. Then, $u\left(t_1^-\right)=-M_0$. In other words, $x_1>M_1$ holds in a left-sided neighborhood of $t_1$, which constraints $\abs{x_1}<M_1$; hence, $\lambda_1\left(t_1^-\right)\leq0$. For the same reason, $\lambda_1\left(t_1^+\right)\leq0$ holds. Therefore, $\lambda_1\left(t_1^+\right)=0=\lambda_1\left(t_1^+\right)$, i.e., $\lambda_1=0$ is continuous at junction time.
\end{proof}

\begin{proof}[Proof of Lemma \ref{lemma:costate}.\ref{lemma:costate_lambdak_polynomial}]
In an unconstrained arc, $\veta\equiv\vzero$ due to \eqref{eq:eta_constraint_zero}. By \eqref{eq:derivative_costate}, $\lambda_k$ is an $\left(n-k\right)$th degree polynomial.

In a constrained arc $\left\{\abs{x_{k}}\equiv M_{k}\right\}$, one has $u\equiv0$. By \eqref{eq:bang_singular_bang_law}, $\lambda_1\equiv0$. Note that $\forall j\not=k$, $\eta_j=0$. Then, $\forall j\leq k$, $\lambda_j=\left(-1\right)^{j-1}\frac{\mathrm{d}^{j-1}}{\mathrm{d}t^{j-1}}\lambda_1\equiv0$ holds. By \eqref{eq:derivative_costate_lambdan_zero}, $k<n$; hence, $\eta_k\equiv\lambda_{k+1}$ holds and $\forall j>k$, $\lambda_{j}$ is an $\left(n-j\right)$th degree polynomial.
\end{proof}

The behavior of $\vlambda$ is given in the above two proofs. If $\vx$ is in a constrained arc $\left\{\abs{x_j}\equiv M_j\right\}$, $j\leq k$, then $\lambda_k$ is zero; otherwise, $\lambda_k$ is polynomial. At a junction time when $\abs{x_k}=M_k$, $\lambda_k$ is allowed discontinous only if $k\not=1$. Based on the above analysis, the bang-bang and singular control laws are proved as follows.

\begin{proof}[Proof of Lemma \ref{lemma:costate}.\ref{lemma:costate_bangbang}]
By \eqref{eq:bang_singular_bang_law}, it only needs to prove the case where $\lambda_1=0$. Assume that $\lambda_1\left(t\right)\equiv0$ for a period. If an unconstrained arc occur in this period, then $\vlambda\equiv\vzero$ and $\veta\equiv\vzero$ hold. By \eqref{eq:hamilton_equiv_0}, it holds that $\lambda_0=0$, which contradicts $\left(\lambda_0,\vlambda\left(t\right)\right)\not=\vzero$. Therefore, $\lambda_1\left(t\right)\equiv0$ for a period only if a constrained arc occurs, i.e., $u\equiv0$. So $u\equiv0$ if $\lambda_1\equiv0$.
\end{proof}

The bang-bang and singular controls imply that $u\left(t\right)\in\left\{0,\pm M_0\right\}$ a.e., which contributes to the following proof on the uniqueness of the optimal control.

\begin{proof}[Proof of Lemma \ref{lemma:costate}.\ref{lemma:costate_uniqueoptimalcontrol}]
Assume that $u_1^*$ and $u_2^*$ are both the optimal control of problem \eqref{eq:optimalproblem} with terminal time $t_\f^*$. Define $u_3^*\left(t\right)=\frac{3}{4}u_1^*\left(t\right)+\frac{1}{4}u_2^*\left(t\right)$, $t\in\left[0,t_\f^*\right]$. Since problem \eqref{eq:optimalproblem} is linear, the control $u_3^*$ is feasible. Furthermore, $u_3^*$ is also optimal since $u_3^*$ also achieves terminal time $t_\f^*$.

According to Lemma \ref{lemma:costate}.\ref{lemma:costate_bangbang}, $\forall 1\leq k\leq 3$, it holds that $ \nu\left(Q_k\right)=0$, where $Q_k\triangleq\left\{t\in\left[0,t_\f^*\right]:u_k^*\left(t\right)\not\in\left\{0,\pm M_0\right\}\right\}$. Denote $P\triangleq\left\{t\in\left[0,t_\f^*\right]:u_1^*\left(t\right)\not=u_2^*\left(t\right)\right\}$. Then, $\forall t\in \Setminus{P}{\left(Q_1\cup Q_2\right)}$, it holds that $u_3^*\left(t\right)\not\in\left\{0,\pm M_0\right\}$; hence, $\Setminus{P}{\left(Q_1\cup Q_2\right)}\subset Q_3$. Therefore,
\begin{equation}
\begin{aligned}
&0\leq\nu\left(P\right)=\nu\left(P\right)-\nu\left(Q_1\right)-\nu\left(Q_2\right)\\
\leq&\nu\left(\Setminus{P}{\left(Q_1\cup Q_2\right)}\right)\leq\nu\left(Q_3\right)=0.
\end{aligned}\end{equation}
Hence, $ \nu\left(P\right)=0$, i.e., $u_1^*\left(t\right)=u_2^*\left(t\right)$ a.e.
\end{proof}

According to the above analysis, the optimal control satisfies $u\equiv M_0$ or $u\equiv-M_0$ in an unconstrained arc, while $u$ is zero in a constrained arc. The following two proofs provide the behavior of $\vx$ at the boundaries of state constraints.

\begin{proof}[Proof of Lemma \ref{lemma:costate}.\ref{lemma:costate_sign}]
Assume that $\vx$ leaves $\left\{x_k\equiv M_k\right\}$ at $t_1$ and enters an unconstrained arc. Then, $\forall j<k$, $x_j\left(t_1\right)=\where{\frac{\mathrm{d}^{k-j}x_k}{\mathrm{d}t^{k-j}}}{t=t_1^-}=0$ holds. Denote $u\left(t_1^+\right)=u_0\in\left\{\pm M_0\right\}$. In other words, $\exists\varepsilon>0$, s.t. $\forall \delta\in\left(0,\varepsilon\right)$, $u\left(t_1+\delta\right)\equiv u_0$. By Proposition \ref{prop:system_dynamics}, $\forall \delta\in\left(0,\varepsilon\right)$, $x_k\left(t_1+\delta\right)=M_k+\frac{u_0}{k!}\delta^k\leq M_k$. Therefore, $u_0=-M_0$ in this case. Similarly, $u\left(t_1^+\right)=M_0$ if $\vx$ leaves $\left\{x_k\equiv-M_k\right\}$ at $t_1$ and enters an unconstrained arc. For the same reason, if $\vx$ enters a constrained arc $\left\{\abs{x_k}\equiv M_k\right\}$ at $t_1$, then $u\left(t_1^-\right)=\left(-1\right)^{k-1}\sgn\left(x_k\left(t_1\right)\right)$.
\end{proof}

\begin{proof}[Proof of Lemma \ref{lemma:costate}.\ref{lemma:costate_junction_secondbracket}]
Assume that $x_k$ is tangent to $M_k$ at $t_1\in\left(0,\tf\right)$. In other words, $x_k\left(t_1\right)=M_k$ and $\exists\varepsilon>0$, s.t. $\forall\delta\in\left(-\varepsilon,0\right)\cup\left(0,\varepsilon\right)$, $x_k\left(t_1+\delta\right)<M_k$. If $k=1$, then $u\left(t_1^\pm\right)=\mp M_0$ holds since $x_1\leq M_1$. Case b in Lemma \ref{lemma:costate}.\ref{lemma:costate_junction_secondbracket} holds.

Assume that $k\geq2$. Since $x_k$ reaches the maximum at $t_1$, it holds that $\dot{x}_k=x_{k-1}=0$. Assume that $x_{k-1}=x_{k-2}=\dots=x_1=0$ holds at $t_1$. Assume that $u\left(t_1^+\right)=u_0\in\left\{\pm M_0\right\}$. According to Proposition \ref{prop:system_dynamics}, $x_k\left(t_1+\delta\right)=M_k+\frac{u_0}{k!}\delta^k$ holds when $\delta>0$ is small enough. Therefore, $u_0=-M_0$. Similarly, $u\left(t_1^-\right)=\left(-1\right)^{k-1}M_0$ holds. Case b occurs.

Otherwise, Case b does not occur. Then, $\exists1\leq h<k$, s.t. $x_{k-1}=x_{k-2}=\dots=x_{k-h+1}=0$ and $x_{k-h}\not=0$ at $t_1$. Note that $x_k\left(t_1+\delta\right)=M_k+\frac{1}{\left(k-h\right)!}x_{k-h}\left(t_1\right)\delta^k+\mathcal{O}\left(\delta^{k+1}\right)$ when $\abs{\delta}$ is small enough. Since $x_k\leq M_k$, $h$ should be even and $x_{k-h}\left(t_1\right)<0$. Case a occurs.
\end{proof}

\section{Switching Surfaces in Problem \eqref{eq:optimalproblem_n4s3_equivalent}}\label{app:switching_surfaces}

This section reasons the switching surfaces in problem \eqref{eq:optimalproblem_n4s3_equivalent}, i.e., Fig. \ref{fig:switching_surface}, which is significant for completely solving problem \eqref{eq:optimalproblem} of order 4 with arbitrarily assigned initial and terminal states.

Consider the following problem:

\begin{IEEEeqnarray}{rl}\label{eq:optimalproblem_n4s3_equivalent_y0}
\min\quad& \widehat{J}=\int_{0}^{\infty}y_3\left(\tau\right)\,\mathrm{d}\tau,\IEEEyesnumber\IEEEyessubnumber*\\
\st\quad&\dot\vy\left(\tau\right)=\vA\vy\left(\tau\right)+\vB v\left(\tau\right),\,\forall\tau\in\left(0,\infty\right),\\
&\vy\left(0\right)=\vy_0=\left(y_{0,k}\right)_{k=1}^3,\\
&y_3\left(\tau\right)\geq 0,\,\forall\tau\in\left(0,\infty\right),\\
&\abs{v\left(\tau\right)}\leq 1,\,\forall\tau\in\left(0,\infty\right).
\end{IEEEeqnarray}

The analytical expressions of surfaces in Fig. \ref{fig:switching_surface} is provided as follows.
\begin{IEEEeqnarray}{rl}\label{eq:switching_surface}
&\Gamma_+=\left\{\begin{aligned}
&\left(a\left(1-t_1+2t_2\right),a^2\left(-t_1-2t_1t_2+\frac12t_1^2+t_2^2\right),a^3\left(t_1^2t_2-t_1t_2^2+\frac12t_1^2-\frac16t_1^3+\frac13t_2^3\right)\right):\\
&a\geq0,\,t_2^*\leq t_2\leq r^*\leq t_1\leq t_1^*,\,t_1\prod_{k=1}^{3}\left(1+\frac{\beta_k^*\tau_1^*}{t_1}\right)=t_2\prod_{k=1}^{3}\left(1+\frac{\beta_k^*\tau_1^*}{t_2}\right)
\end{aligned}\right\},\IEEEyesnumber\IEEEyessubnumber*\\
&\Gamma_-=\left\{\left(a\left(1+t\right),a^2\left(-t-\frac12t^2\right),a^3\left(\frac12t^2+\frac16t^3\right)\right):a\geq0,\,0\leq t\leq r^*\right\},\\
&\Gamma_\mathrm{f}=\left\{\left(a\left(1-t\right),a^2\left(-t+\frac12t^2\right),a^3\left(\frac12t^2-\frac16t^3\right)\right):a\geq0,\,0\leq t\leq3\right\}.
\end{IEEEeqnarray}
Specifically, $\beta_k^*$, $\tau_1^*$ and $\alpha^*$ are provided in \eqref{eq:Chattering_n4s3_equivalent_optimal_solution}. $r^*\approx6.4979$ is the local maximum of $f\left(r\right)=r\prod_{k=1}^{3}\left(1+\frac{\beta_k^*\tau_1^*}{r}\right)$. Note that $f\left(r\right)$ monotonically increases in $\left(0,r^*\right)$ and decreases in $\left(r^*,\infty\right)$. $\left(t_1^*,t_2^*\right)\approx\left(16.8674,2.7289\right)$ is the solution of the following system of equations:
\begin{equation}
\begin{dcases}
t_1^*\prod_{k=1}^{3}\left(1+\frac{\beta_k^*\tau_1^*}{t_1^*}\right)=t_2^*\prod_{k=1}^{3}\left(1+\frac{\beta_k^*\tau_1^*}{t_2^*}\right),\\
{t_1^*}^2t_2^*-t_1^*{t_2^*}^2+\frac12{t_1^*}^2-\frac16{t_1^*}^3+\frac13{t_2^*}^3=0.
\end{dcases}\end{equation}

The homogeneity shown in Proposition \ref{prop:n4s3_equivalent_recursive} can be observed from \eqref{eq:Chattering_n4s3_equivalent_optimal_solution}.

\subsection{Conditions for Chattering in Problem \eqref{eq:optimalproblem_n4s3_equivalent_y0}}

Theorem \ref{thm:Chattering_n4s3_equivalent_optimal_y11} provides the optimal trajectory with $\vy_0\in\left\{a\ve_1:a\geq0\right\}$. This section reasons a sufficient and necessary condition for chattering w.r.t. $\vy_0$.

\begin{proposition}[Conditions for Chattering]
Assume that problem \eqref{eq:optimalproblem_n4s3_equivalent_y0} is feasible. Let
\begin{equation}
\tau_0=\begin{dcases}
0,&\text{if }y_3\left(0\right)=y_2\left(0\right)=0,\,y_1\left(0\right)\geq0,\\
\arg\min\left\{\tau>0:y_3\left(\tau\right)=0\right\}\in\R_{++},&\text{otherwise}.\\
\end{dcases}\end{equation}
Then, $\tau_0<\infty$. Furthermore, chattering does not occur if and only if $\vy_0\in\left\{\left(t,-\frac12t^2,\frac16t^3\right):t\geq0\right\}$.
\end{proposition}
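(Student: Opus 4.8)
The plan is to show that the curve $C\triangleq\{\left(t,-\tfrac12t^2,\tfrac16t^3\right):t\ge0\}$ is exactly the set of states from which the constant control $v\equiv-1$ drives the state to $\vzero$ while respecting $y_3\ge0$: integrating $v\equiv-1$ from $\vy_0=\left(t_*,-\tfrac12t_*^2,\tfrac16t_*^3\right)$ gives $\vy(\tau)=\left(t_*-\tau,-\tfrac12(t_*-\tau)^2,\tfrac16(t_*-\tau)^3\right)$, so $y_3\ge0$ on $[0,t_*]$, $\vy(t_*)=\vzero$, and continuing with $v\equiv0$ keeps $\vy\equiv\vzero$; conversely a $v\equiv-1$ arc that lands on $\vzero$ must have started on $C$. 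For the \emph{if} part I would prove this control is the \emph{unique} optimum when $\vy_0\in C$: by Proposition \ref{prop:system_dynamics}, any admissible $\tilde v$ (so $\tilde v+1\ge0$) from $\vy_0\in C$ satisfies $\tilde y_3(\tau)-y_3^{\mathrm{ref}}(\tau)=\int_0^\tau\tfrac{(\tau-s)^2}{2}\bigl(\tilde v(s)+1\bigr)\,\mathrm{d}s\ge0$, where $y_3^{\mathrm{ref}}$ is the $y_3$-component under $v\equiv-1$; hence $v\equiv-1$ minimizes $y_3(\tau)$ pointwise, a fortiori $\widehat{J}$, and equality forces $\tilde v=-1$ a.e.\ on $(0,t_*)$ and $\tilde v=0$ a.e.\ afterwards. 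So $\tau_0=t_*<\infty$ and the optimal control has no switches, hence no chattering ($t_*=0$ being the trivial case $\vy_0=\vzero$).

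For the \emph{only if} part, let $\vy_0\notin C$ be feasible. Feasibility gives $\widehat{J}^*<\infty$ (for instance, reach $C$ along a feasible arc and then slide to $\vzero$), so an optimal $(v^*,\vy^*)$ exists with the bang-bang/singular structure of Proposition \ref{prop:n4s3_equivalent_costate}, unique by the convexity argument of Proposition \ref{prop:n4s3_equivalent_costate}.\ref{prop:n4s3_equivalent_costate_unique}. Next I would prove $\tau_0<\infty$: were $y_3^*>0$ on all of $(0,\infty)$, Proposition \ref{prop:n4s3_equivalent_costate}.\ref{prop:n4s3_equivalent_costate_polynomial} would make $p_1$ a cubic there, so $v^*$ switches finitely often and is eventually a constant $v_\infty\in\{0,\pm1\}$; $v_\infty=1$ gives $y_3^*\to+\infty$ (so $\widehat{J}^*=\infty$), $v_\infty=-1$ gives $y_3^*\to-\infty$, and $v_\infty=0$ gives $p_1\equiv0$, hence $y_3^*\equiv0$ eventually by Proposition \ref{prop:n4s3_equivalent_costate}.\ref{prop:n4s3_equivalent_costate_p1_0} --- all impossible. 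Since $y_3^*\ge0$ has a local minimum at $\tau_0$ (or, if $\tau_0=0$, $y_3(0)=y_2(0)=0$ by definition), $y_2^*(\tau_0)=0$; and $\ddot y_3^*=y_1^*$ together with $y_3^*\ge0$ just after $\tau_0$ forces $y_1^*(\tau_0)\ge0$. Hence $\vy^*(\tau_0)=c\,\ve_1$ with $c\ge0$.

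The endgame splits on $c$. If $c>0$, BPO makes $\vy^*|_{[\tau_0,\infty)}$ optimal for problem \eqref{eq:optimalproblem_n4s3_equivalent_y0} with initial state $c\,\ve_1$, which is a rescaling (Proposition \ref{prop:n4s3_equivalent_recursive}) of problem \eqref{eq:optimalproblem_n4s3_equivalent}, whose optimum chatters by Proposition \ref{prop:n4s3_equivalent_recursive_final}; so chattering occurs (consistently with $\vy_0\notin C$). If $c=0$, then $\vy^*\equiv\vzero$ on $(\tau_0,\infty)$, so $y_3^*>0$ on $(0,\tau_0)$ and the whole optimal trajectory has finitely many switches --- no chattering --- and it remains to derive $\vy_0\in C$, contradicting the hypothesis. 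Here $y_3^*\equiv0$ on $(\tau_0,\infty)$ forces $p_1\equiv p_2\equiv p_3\equiv0$ there, so $(p_0,\vp)\not=\vzero$ gives $p_0>0$; on $(0,\tau_0)$, $p_1$ is a cubic with leading coefficient $-p_0/6<0$, and continuity of $p_1,p_2$ at $\tau_0$ gives a double root there, so $p_1(\tau)=-\tfrac{p_0}{6}(\tau-\tau_0)^2(\tau-b)$ and $v^*=\sgn(\tau-b)$ on $(0,\tau_0)$; the alternatives $b\le0$ and $0<b<\tau_0$ each make $y_3^*(\tau)=-\tfrac16(\tau_0-\tau)^3<0$ just before $\tau_0$, so $b\ge\tau_0$, $v^*\equiv-1$ on $(0,\tau_0)$, and therefore $\vy_0\in C$.

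I expect the main obstacle to be this last costate computation in the $c=0$ case: establishing that $p_1$ is a cubic with a double root at $\tau_0$ and leading coefficient $-p_0/6<0$, and then using $y_3\ge0$ near $\tau_0$ to discard every sign pattern except $v^*\equiv-1$. A secondary, foundational point is the clean verification that every feasible $\vy_0$ has $\widehat{J}^*<\infty$ with a unique optimal solution, which I would settle by exhibiting an explicit feasible arc into $C$ and reusing the convexity/bang-bang reasoning already in the paper.
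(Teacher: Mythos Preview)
Your proof is correct and largely follows the same outline as the paper's: both show $\tau_0<\infty$ by the polynomial structure of $p_1$ on an unconstrained arc, both observe $\vy^*(\tau_0)=c\,\ve_1$ with $c\ge0$, and both handle the $c=0$ case by the costate computation you describe (double root of $p_1$ at $\tau_0$, leading coefficient $-p_0/6$, sign analysis forcing $v^*\equiv-1$ on $(0,\tau_0)$, hence $\vy_0\in C$).

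The genuine difference is in the \emph{if} direction. The paper argues by contradiction: assuming $\vy_0\in C$ but $y_1^*(\tau_0)=a>0$, it writes down the algebraic systems (for two switches, then for at most one switch on $(0,\tau_0)$) coupling the boundary conditions with the costate matching to the known chattering mode $\{\beta_k^*\}$, and checks that each system has only the solution $a=0$. Your approach instead gives a direct variational argument: from any point of $C$ the control $v\equiv-1$ minimizes $y_3(\tau)$ pointwise over all admissible controls via the identity $\tilde y_3-y_3^{\mathrm{ref}}=\int_0^\tau\tfrac{(\tau-s)^2}{2}(\tilde v(s)+1)\,\mathrm{d}s\ge0$, which immediately yields optimality and uniqueness without invoking the explicit chattering solution or solving any equations. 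This is more elementary and self-contained; the paper's route, by contrast, reuses the machinery of Theorem~\ref{thm:Chattering_n4s3_equivalent_optimal_y11} and is more computational. Your flagged ``secondary point'' (existence of an optimal solution with finite cost) is indeed taken for granted in the paper as well.
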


\begin{proof}
Assume that $\forall \tau\geq0$, $y_3\left(\tau\right)>0$, i.e., the whole trajectory is unconstrained. Specifically, $p_2\left(t\right)$ is a cubic polynomial. Evidently, $p_2\equiv0$ implies $\vp\equiv\vzero$ which contradicts $\left(p_0,\vp\left(t\right)\right)\not=\vzero$ and $\widehat\hamilton\equiv0$. Hence, $\exists t_0\geq0$, s.t. $\forall \tau\geq \tau_0$, $p_2\left(\tau\right)$ preserves a constant sign. (a) If $\forall \tau\geq t_0$, $p_2\left(t\right)>0$, then $\forall \tau\geq \tau_0$, $v\left(\tau\right)\equiv-1$. In this case, $y_3\to-\infty$ as $\tau\to\infty$, which contradicts $y_3>0$. (b) If $\forall \tau\geq t_0$, $p_2\left(t\right)<0$, then $\forall \tau\geq \tau_0$, $v\left(\tau\right)\equiv+1$. In this case, $y_3\to+\infty$ as $\tau\to\infty$, which contradicts the optimality. In summary, the constraint $y_3=0$ is active at some time.

Therefore, $\tau_0<\infty$. Evidently, $y_3\left(\tau_0\right)=y_2\left(\tau_0\right)=0$ and $y_1\left(\tau_0\right)\geq0$. According to BPO and Proposition \ref{prop:n4s3_equivalent_recursive}, the trajectory for $\tau\geq\tau_0$ should chatter if and only if $y_1\left(\tau_0\right)>0$.

Assume that $y_1\left(\tau_0\right)=0$, i.e., chattering does not occur. Then, $\exists\delta>0$, s.t.
\begin{equation}
\begin{dcases}
v\left(\tau\right)=-1,\,\vy\left(\tau\right)=\left(\tau_0-\tau,-\frac12\left(\tau_0-\tau\right)^2,\frac16\left(\tau_0-\tau\right)^3\right),\,p_1\left(\tau\right)>0,&\tau\in\left(\tau_0-\delta,\tau_0\right),\\
v\left(\tau\right)=+1,\,\vy\left(\tau\right)=\vzero,\,\vp\left(\tau\right)=\vzero,&\tau\in\left(\tau_0,\tau_0+\delta\right).
\end{dcases}\end{equation}
In particular, $p_1\left(\tau\right)=\frac{p_0}{6}\left(\tau-\tau_0\right)^2\left(\tau-\tau_0'\right)$ where $\tau_0'\geq\tau_0$. Hence, $\forall\tau\in\left(0,\tau_0\right)$, $p_1\left(\tau\right)\geq0$ and $v\left(\tau\right)\equiv+1$. Therefore, chatter does not occur only if $\vy_0\in\left\{\left(t,-\frac12t^2,\frac16t^3\right):t\geq0\right\}$.

For the case where $\vy_0\in\left\{\left(t,-\frac12t^2,\frac16t^3\right):t>0\right\}$, assume that $y_1\left(\tau_0\right)>0$. Without loss of generality, consider $\vy_0=\left(1,-\frac12,\frac16\right)$ and $y_1\left(\tau_0\right)=a>0$. If $v$ switches for two times during $\left(0,\tau_0\right)$, one can reason that the following system of equations holds.
\begin{equation}
\begin{dcases}
a=\left(1-\tau_0\right)+2\tau_0'-2\tau_0'',\\
0=\frac12\left(1-\tau_0\right)^2+\tau_0'^2-\tau_0''^2,\\
0=\frac16\left(1-\tau_0\right)^3+\frac13\tau_0'^3-\frac13\tau_0''^3,\\
0=\prod_{k=1}^{3}\left(\tau_0'+\beta_k^*\tau_1^*a\right)-3\mu\tau_0'^2,\\
0=\prod_{k=1}^{3}\left(\tau_0''+\beta_k^*\tau_1^*a\right)-3\mu\tau_0''^2.
\end{dcases}\end{equation}
The above system of equations has a unique solution $a=0$ which contradicts $a>0$. Therefore, $v$ switches for at most one time during $\left(0,\tau_0\right)$. In this case, the following system of equations holds.
\begin{equation}
\begin{dcases}
a=\left(1-\tau_0\right)+2\tau_0-2\tau_0',\\
0=-\frac12\left(1-\tau_0\right)^2+\tau_0^2-\tau_0'^2,\\
0=\frac16\left(1-\tau_0\right)^3+\frac13\tau_0^3-\frac13\tau_0'^3.
\end{dcases}\end{equation}
The above system of equations has a unique solution $a=0$ which contradicts $a=0$. Therefore, $a=y_1\left(\tau_0\right)=0$; hence, chattering does not occur if $\vy_0\in\left\{\left(t,-\frac12t^2,\frac16t^3\right):t\geq0\right\}$.
\end{proof}

\begin{remark}
In fact, one can prove that $\left\{\left(t,-\frac12t^2,\frac16t^3\right):t\geq0\right\}\subset\Omega_-$.
\end{remark}

\subsection{Cases where $v$ Switches for Two Times before $\tau_0$}
Consider the case where $v$ switches for two times before $\tau_0$. Assume that $\vy_0\left(\tau_0\right)=\left(a,0,0\right)$ where $a>0$. The two switching time points are $\tau_0'$ and $\tau_0''$ where $0<\tau_0'<\tau_0''<\tau_0$. Let $\vy'=\left(y_k'\right)_{k=1}^3=\vy\left(\tau_0'\right)$ and $\vy''=\left(y_k''\right)_{k=1}^3=\vy\left(\tau_0''\right)$ are the switching points. Then, one has
\begin{equation}
\begin{dcases}
a=y_1'+\left(\tau_0-\tau_0'\right)-2\left(\tau_0-\tau_0''\right),\\
0=y_2'+y_1'\left(\tau_0-\tau_0'\right)+\frac12\left(\tau_0-\tau_0'\right)^2-\left(\tau_0-\tau_0''\right)^2,\\
0=y_3'+y_2'\left(\tau_0-\tau_0'\right)+\frac12y_1'\left(\tau_0-\tau_0'\right)^2+\frac16\left(\tau_0-\tau_0'\right)^3-\frac13\left(\tau_0-\tau_0''\right)^3,\\
a=y_1''-\left(\tau_0-\tau_0''\right),\\
0=y_2''+y_1''\left(\tau_0-\tau_0''\right)-\frac12\left(\tau_0-\tau_0''\right)^2,\\
0=y_3''+y_2''\left(\tau_0-\tau_0''\right)-\frac12y_1''\left(\tau_0-\tau_0''\right)^2+\frac16\left(\tau_0-\tau_0''\right)^3.
\end{dcases}\end{equation}
Consider the costate vector. According to Theorem \ref{thm:Chattering_n4s3_equivalent_optimal_y11}, one has $\forall \tau\geq\left[\tau_0,\tau_1\right]$,
\begin{equation}
p_1\left(\tau\right)=p_1^+\left(\tau\right)\triangleq-\frac{p_0}{6}\prod_{k=1}^{3}\left(\tau-\left(1-\beta_k^*\right)\tau_0-\beta_k^*\tau_1\right),\end{equation}
where $\tau_1$ is the next juntion time. By the junction condition \eqref{eq:Chattering_n4s3_equivalent_junction}, $\exists\mu\geq0$, s.t. $\forall \tau\geq\left[0,\tau_0\right]$, $p_1\left(\tau\right)=p_1^-\left(\tau\right)\triangleq p_1^+\left(\tau\right)-\frac\mu2\left(\tau-\tau_0\right)^2$. Since $\tau_0'$ and $\tau_0''$ are the switching time points, one has
\begin{equation}\label{eq:switching_surface_2times_p_condition}
p_1^-\left(\tau_0'\right)=p_1^-\left(\tau_0''\right)=0.\end{equation}

Let $t_1=\frac{\tau_0-\tau_0''}{a}$ and $t_2=\frac{\tau_0-\tau_0'}{a}$. Evidently, $0\leq t_2<t_1$. \eqref{eq:switching_surface_2times_p_condition} implies that
\begin{equation}
0< t_2\leq r^*\leq t_1,\,t_1\prod_{k=1}^{3}\left(1+\frac{\beta_k^*\tau_1^*}{t_1}\right)=t_2\prod_{k=1}^{3}\left(1+\frac{\beta_k^*\tau_1^*}{t_2}\right).\end{equation}
Furthermore, $y_3'\geq0$ implies that $t_2^*\leq t_2\leq r^*\leq t_1\leq t_1^*$. Therefore, the switching point $\vy'\in\Gamma_+$. In this case, $\vy_0\in\Omega_-$; hence, one can verify that $\Gamma_+=\left\{\vy\left(\tau_0'\right):\vy_0\in\Omega_-\right\}$.

For the same reason, one can prove that
\begin{equation}
\widehat{\Gamma}_-=\left\{\vy\left(\tau_0''\right):\vy_0\in\Omega_-\right\}=\left\{\left(a\left(1+t\right),a^2\left(-t-\frac12t^2\right),a^3\left(\frac12t^2+\frac16t^3\right)\right):a\geq0,\,t_2^*\leq t\leq r^*\right\}\subset\Gamma_-\end{equation}
is the switching surface for $\vy''$ in this case.

\subsection{Cases where $v$ Switches for One Time before $\tau_0$}
Consider the case where $v$ switches for one time before $\tau_0$. Assume that $\vy_0\left(\tau_0\right)=\left(a,0,0\right)$ where $a>0$. The switching time point is $\tau_0''$ where $0<\tau_0''<\tau_0$. Let $\vy''=\left(y_k''\right)_{k=1}^3=\vy\left(\tau_0''\right)$ is the switching point. Then, one has
\begin{equation}
\begin{dcases}
a=y_1''-\left(\tau_0-\tau_0''\right),\\
0=y_2''+y_1''\left(\tau_0-\tau_0''\right)-\frac12\left(\tau_0-\tau_0''\right)^2,\\
0=y_3''+y_2''\left(\tau_0-\tau_0''\right)-\frac12y_1''\left(\tau_0-\tau_0''\right)^2+\frac16\left(\tau_0-\tau_0''\right)^3,\\
p_1^-\left(\tau_0''\right)=0.
\end{dcases}\end{equation}
In particular, $\tau_0''$ is the unique root of $p_1^-$ on $\left[0,\tau_0\right]$. Let $t=\frac{\tau_0-\tau_0''}{a}$. One can prove that $0\leq t\leq r^*$. Furthermore, $\Gamma_-=\left\{\vy\left(\tau_0''\right):\vy_0\in\Omega_+\right\}$ is the switching surface in this surface.

\end{document}